\documentclass[12pt]{article}
\textwidth=14.5cm \oddsidemargin=1cm
\evensidemargin=1cm
\usepackage[utf8]{inputenc}
\usepackage{amsmath}
\usepackage{amsfonts}
\usepackage{amssymb}
\usepackage{amsthm}
\usepackage{bbold}
\usepackage{mathrsfs}
\usepackage{IEEEtrantools}
 \usepackage{graphicx}
 \usepackage{tikz-cd}
 \usepackage[shortlabels]{enumitem}

\newcommand{\Nn}{\mathbb{N}}

\newcommand{\Pp}{\mathbb{P}}
\newcommand{\Qq}{\mathbb{Q}}

\newcommand{\tp}{\mathrm{tp}}

\newcommand{\acl}{\mathrm{acl}}

\newcommand{\dcl}{\mathrm{dcl}}
\newcommand{\e}{\epsilon}
\newcommand{\si}{\sigma}

\renewcommand{\S}{\mathcal{S}}

\newcommand{\I}{\mathbb{1}}
\newcommand{\Om}{\Omega}

\def\Ind#1#2{#1\setbox0=\hbox{$#1x$}\kern\wd0\hbox to 0pt{\hss$#1\mid$\hss}
\lower.9\ht0\hbox to 0pt{\hss$#1\smile$\hss}\kern\wd0}

\def\ind{\mathop{\mathpalette\Ind{}}}

\def\notind#1#2{#1\setbox0=\hbox{$#1x$}\kern\wd0
\hbox to 0pt{\mathchardef\nn=12854\hss$#1\nn$\kern1.4\wd0\hss}
\hbox to 0pt{\hss$#1\mid$\hss}\lower.9\ht0 \hbox to 0pt{\hss$#1\smile$\hss}\kern\wd0}

\begin{document}

\newtheorem{lemma}{Lemma}[subsection]
\newtheorem{theorem}[lemma]{Theorem}
\newtheorem{definition}[lemma]{Definition}
\newtheorem{example}[lemma]{Example}
\newtheorem{corollary}[lemma]{Corollary}
\newtheorem{proposition}[lemma]{Proposition}
\newtheorem{claim}{Claim}[lemma]

\title{An Algebraic Hypergraph Regularity Lemma} 
 
\author{Alexis Chevalier, Elad Levi}

\date{}

\maketitle 
 
\begin{abstract}
Szemerédi's regularity lemma is a powerful tool in graph theory. It states that for every large
enough graph, there exists a   partition of the edge set with bounded size such that most induced subgraphs are quasirandom.
 
  When the graph is a
definable set $\phi(x, y)$ in a finite field $F_q$, Tao's algebraic graph regularity lemma (\cite{Tao2012}) shows that there is a partition of the graph $\phi(x, y)$ such that all induced subgraphs are quasirandom and the error bound on quasirandomness is $O(q^{-1/4})$. 

In this work we prove an algebraic hypergraph regularity lemma for definable sets in finite fields, thus answering a question of Tao. We also extend the algebraic regularity lemma to definable sets in the difference fields $(F_q^{alg}, x^q)$ and we offer a new point of view on the geometric content of the algebraic regularity lemma.
\end{abstract}
 
 \tableofcontents
  
 \section{Introduction}
 In this paper, we prove a strong hypergraph regularity lemma for definable sets in finite fields, answering a question of Tao. We offer three main contributions which build on the algebraic regularity lemma of \cite{Tao2012}. Firstly, we extend the algebraic   regularity lemma  to arbitrary definable uniform hypergraphs, of any arity. Secondly, we extend the algebraic regularity lemma beyond the context of finite fields by  proving our results  in the difference fields $K_q  = (F_q^{alg}, x^q)$ for definable sets in the language of rings with a difference operator $\si$.
   Thirdly, we offer a new point of view on algebraic  regularity, relating combinatorial Szemer\'edi-style regularity for definable sets to algebraic and geometric  properties of associated varieties and function fields. Therefore our hypergraph partitions have a natural geometric interpretation. 

The algebraic regularity lemma of \cite{Tao2012} says that a strong version of Szemer\'edi regularity holds for definable sets $\phi(x, y)$  (with parameters) when interpreted in finite fields $F_q$.   Suppose that $\phi(x, y)$ is contained in the definable set $X \times Y$.  The algebraic regularity lemma says that there is some $N \geq 1$ and definable sets $X_1, \ldots, X_{n}$ and $Y_1, \ldots, Y_{m}$  partitioning $X$ and $Y$ respectively  with $n, m\leq N$ such that for all $i\leq n$ and $j \leq m$, the graph $\phi(x,y) \wedge X_i(x) \wedge Y_j(y)$ is $q^{-1/4}$-regular in the field $F_q$. This means that for sufficiently large prime powers $q$, taking parameters in $F_q$, if  $\phi \cap X_i \times Y_j \neq \emptyset$ in $F_q$, then for any $A \subseteq X_i$ and $B \subseteq Y_j$, 
\[
|\frac{|\phi(x, y) \cap A \times B|}{|\phi(x, y) \cap  X_i \times Y_j|} - \frac{|A \times B|}{|X_i \times Y_j|}|  = O(q^{-1/4}).
\]
$O(\cdot)$ and $N$ depend only on the formula $\phi$ and not on the parameters.

The algebraic regularity lemma  for graphs strengthens the classical Szemer\'edi regularity lemma   in two ways. Firstly, there is a fixed $N$ such that the error bounds on regularity vanish against a fixed partition of the graph into at most $N^2$ subgraphs, as $q \to \infty$. As a result, the sizes of the sets $X_i$ and $Y_j$ are of the same order as $|X|$ and $|Y|$ as $q \to \infty$. Secondly, regularity is obtained for all pairs $(X_i, Y_j)$, whereas the classical graph regularity lemma only guarantees regularity for \emph{most} pairs, in an appropriate sense.  The Szemer\'edi regularity lemma first appeared   in \cite{Szemeredi1976} and the reader is referred to \cite{Gowers2006} for a general discussion.

The algebraic regularity lemma of \cite{Tao2012} has   attracted considerable attention from model theorists and it is now well-understood that the main engine behind this result is the stability of the formulas  $\mu(\phi(x, a) \wedge \psi(x, b)) = \lambda$, where $\mu$ is the counting measure in the variable $x$ which was first shown to be definable in  \cite{ChatzidakisVanDDMacintyre1992}.  See \cite{Pillay2013} for a model theoretic discussion of the algebraic regularity lemma for graphs and its extension to a wider class of  theories. 

However, generalising Tao's algebraic regularity lemma to definable hypergraphs requires new ideas.  In this paper, we take a fundamentally geometric approach to this problem and we use the model theory of ACFA  to derive an algebraic hypergraph regularity lemma which strengthens the classical hypergraphs regularity lemmas of 
  \cite{Gowers2007} and \cite{RodelSkokan2004}  in the same ways as Tao's algebraic regularity lemma strengthens Szemer\'edi's. 
  
  There are some interesting and surprising technical twists to our results, but given a definable hypergraph $\phi(x_1, \ldots, x_n) \subseteq X_1 \times \ldots \times X_n$ in finite fields (or in the difference fields $K_q$, as discussed below), we find some $N \geq 0$ not depending only on $\phi$ and a partition $\mathcal{W}_u$ of each set $\prod_{i \in u} X_i$ where $u \subseteq \{1, \ldots, n\}$ and $|u| = n-1$ such that $|\mathcal{W}_u| \leq N$ for every $u$ and such that restricting $\phi$ to any family $(W_u)_{|u| = n-1}$ with $W_u \in \mathcal{W}_u$ gives a $q^{-2^{-n-1}}$-regular hypergraph in the sense of \cite{Gowers2007}. See Theorem \ref{theorem: combinatorial regularity, classical partitions} for a precise statement. 
%
%
 
 \bigskip

The choice to work with difference equations and ACFA rather than pseudofinite fields is a natural and useful one. Recall that ACFA is the model completion of the theory of algebraically closed inversive difference fields in the language of rings with a difference operator $\si$. ACFA extends naturally the theory of pseudofinite fields in the sense that for any $K \models ACFA$, the fixed field $\si(x) = x$ of $K$ is a model of the theory of pseudofinite fields and is stably embedded inside $K$.  The fundamental results of \cite{HrushovskiFrob} also show that ACFA is the asymptotic theory of the structures $K_q = (F_q^{alg}, x^q)$ in the same way that the theory of pseudofinite fields is the asymptotic theory of the finite fields $F_q$.

In ACFA, there is a form of quantifier-elimination which is more natural than the quantifier-elimination of pseudofinite fields: every definable set contained in a variety of finite total dimension is equivalent to the image of a projection $\pi : X \to Y$  where $X$ and $Y$ are difference varieties of finite total dimension and $X$ is a finite cover of $Y$.
 In this paper, we refer to such definable sets as \emph{Galois formulas.} Galois formulas specialise to finite sets in the structures $K_q$ and it is easy to deduce a natural characterisation of the counting measure on Galois formulas using the results of \cite{HrushovskiFrob}. This unlocks an algebraic characterisation of combinatorial regularity. 

We note that it is possible to recover the  same kind of quantifier-elimination as in ACFA in the more modest context of  pseudofinite fields by enriching the language of rings slightly. If  $F$ is a pseudofinite field viewed as a structure in the language of rings, we can add a sort $F_n$ for every   Galois extension of $F$ of degree $n$ and on each $F_n$ we add a difference operator $\si_n$ which we interpret as a generator of $Gal(F_n/F)$.  See \cite{Johnson2019} for an example of this approach. Under this approach and in this restricted context, all results in this paper could be recast using   the classical Lang-Weil estimates and the discussion of the counting measure found in the Appendix of \cite{Hrushovski2002}.

However, instead of working with the expansion of  pseudofinite fields described above, it is more natural to work with full models of ACFA. Hrushovski's twisted Lang-Weil estimates directly extend the classical Lang-Weil estimates to this setting, so the technical transition is seamless. Moreover, shifting the focus to difference equations  gives access to many new definable sets which do not come from pseudofinite fields. For example, algebraic dynamics, which is concerned with the study of equations of the form $\si(x) = f(x)$ where $f$ is rational, produces many new definable sets of great interest. Finite simple groups of Lie type also fall under this umbrella. 

In fact,  much of the  proof of our algebraic hypergraph regularity lemma happens in a setting which is similar to the category of algebraic dynamics defined in \cite{ChatzidakisHrushovski2008}.  If $\phi$ is a Galois formula corresponding to the finite projection of irreducible difference varieties $X \to Y$  over a difference field $A$, then we associate to $\phi$ a finite Galois extension $L/A(a)_\si$ where $A(a)_\si$ is the difference function field of $Y$ and $L$ is the pure field associated to  the finite cover $Y$ over $A(a)_\si$.  Our fundamental object of study is  the pair $(A(a)_\si, L)$, where $L$   has no specified difference structure. We think of $\phi$ as asking a question about extensions of $\si$ from $A(a)_\si$ to $L$. 
In this setting, we find that  combinatorial regularity for   definable hypergraphs is equivalent to certain algebraic properties of the pair $(A(a)_\si, L)$. 

In this paper, we find it useful to discuss \emph{systems of varieties} rather than definable hypergraphs. We say that $\Om$ is a system of varieties on the finite set $V$ over the difference field $A$ if $\Om$ is a functor from the powerset $P(V)$ to difference varieties of finite total dimension such that for every $u \subseteq V$, $\Om(u)$ is a finite cover of the fibre product $\prod(\Om(v), v \in P(u)^-)$. 
See section \ref{subsection: regular systems of varieties} for the precise definitions and notation. When $\Om$ is a system of varieties, we are interested in the definable sets $\rho_u\Om(u)$ obtained by projecting the variety $\Om(u)$ down onto the fibre product $\Om(u)^- = \prod(\Om(v), v\in P(u)^-)$. The definable sets $\rho_u\Om(u)$ are Galois formulas, by definition.

Systems of varieties relate to hypergraphs as follows. An $n$-partite $n$-uniform hypergraph $G$ can be viewed as a functor $G$ on the powerset of $\{1, \ldots, n\}$ together with sets $X_1, \ldots, X_n$ such that for every subset $u$, $G(u) \subseteq \prod_{i \in u} X_i$ and $G(u)$ is contained in the fibre product of the sets $\{G(v) \mid v \subsetneq u\}$. Here the fibre product is defined with respect to the system of projections $\prod_{i \in u} X_i \to \prod_{i \in v}X_i$ for $v \subseteq u$. To say that $G(u)$ is contained in the fibre product of the sets $\{G(v) \mid v \subsetneq u\}$ is equivalent to saying that $G(u)$ is contained in the set $\{(x_i)_{i \in u} \in \prod_{i \in u}X_i \mid (x_i)_{i \in v} \in G(v), v \subsetneq u\}$. Equivalently, $G(u)$ is contained in the set of cliques of the $n$-partite $(n-1)$-uniform hypergraph $(G(v))_{v \subsetneq u}$.  Under this point of view, it is clear how to translate any statement about a definable hypergraph into a statement about a system of varieties. 

However, systems of varieties offer finer control over definable sets, as they allow us to move between the various finite Galois covers inside the system, whereas a definable hypergraph $G$ only gives access to a single Galois cover. As is common in the model theory of pseudofinite fields or ACFA, the correct point of view on definable sets is an \emph{\'etale point of view}. In this paper we will not make any technical use of the term `\'etale', but we will use it to underline the philosophy and the context of our results. Therefore, a system of varieties is really the \'etale analog of a definable hypergraph.

 There is a natural correspondence between systems of varieties and \emph{systems of difference fields}, obtained by taking   the difference function fields associated to the varieties in a system of varieties $\Om$. See section \ref{subsection: regular systems of difference fields} for precise definitions. If $\S$ is a system of difference fields associated to $\Om$,   we will say that $\Om$ is \emph{regular} if for all $u \subseteq V$, $\S(u)$ is linearly disjoint from the   composite   of fields $(\S(v)^{alg})_{v \in P(u)^-}$ over the   the composite $(\S(v))_{v \in P(u)^-}$. This algebraic notion of regularity will be seen to be equivalent to the combinatorial one for hypergraphs. Our choice of terminology  results from a happy coincidence, since our algebraic notion of regularity can be seen as a generalisation of the usual notion of regularity for field extensions. Regularity can also be defined for $\Om$ in purely geometric terms as a certain form of preservation of irreducibility under base change, see Proposition \ref{proposition: geometric regularity}.

\bigskip

 We formulate two algebraic  hypergraph regularity lemmas, one in the \'etale setting and one in the ``classical'' setting. See Theorems \ref{theorem: combinatorial hypergraph regularity with covers} and \ref{theorem: combinatorial regularity, classical partitions} respectively. The \'etale setting provides definable partitions and good error bounds, whereas the classical setting provides the ``expected'' partitions but these are not definable, and the error bounds become weaker. It is an open question whether it is possible to find a ``classical'' algebraic hypergraph regularity lemma where the partitions are definable sets.   See the end of Section \ref{subsection: classical hypergraph regularity} for a discussion.

Our proof of the algebraic regularity lemma relies on the fundamental result of Gowers which establishes the near-equivalence between \emph{edge-uniformity} and \emph{quasirandomness}. Our terminology is in line with \cite{Gowers2006} but either edge-uniformity or quasirandomness are often referred to simply as  \emph{combinatorial regularity} in the literature. These notions are defined in Definitions \ref{definition: quasirandom}, \ref{definition: e-quasirandom} and \ref{definition: combinatorial edge uniformity}. Because of our shift to the \'etale point of view, we also introduce the notion of \emph{\'etale-edge-uniformity}, see Definitions \ref{definition: definable etale edge uniformity} and \ref{definition: combinatorial edge uniformity}. Lemma \ref{lemma: edge uniform same as etale-edge-uniform} shows that edge-uniformity and \'etale-edge-uniformity are equivalent, but the distinction is useful.

 The second ingredient of our proof of the algebraic regularity lemma   is the stochastic independence theorem \ref{theorem: stochastic independence}. 
We will see that quasirandomness follows immediately from the stochastic independence theorem. This theorem says that certain definable sets contained in regular systems behave like independent random variables. From quasirandomness, it is straightforward to recover \'etale-edge-uniformity by the Gowers equivalence, and one can then deduce edge-uniformity by taking sections. This is carried out in Section \ref{subsection: classical hypergraph regularity}.

 From the stochastic independence theorem, we also deduce the stationarity theorem \ref{corollary: stationarity theorem}. 
 A stationarity theorem was the main tool in Tao's proof of the algebraic regularity lemma but the argument for our algebraic regularity lemma  does not rely on the stationarity theorem. Nevertheless, it is a theorem of wider interest. 
 
  Recall that Tao's algebraic regularity lemma  relied on the fact that  the measure of a formula  $\phi(x, a) \wedge \psi(x, b)$ is controlled by the type of $a$ and the type of $b$. As a result, we can construct  definable partitions $Y_1, \ldots, Y_n$, $Z_1, \ldots, Z_k$ such that for any $i \leq n$ and $j \leq k$, $\mu(\phi(x, a) \wedge \psi(x, b))$ is generically constant for $a \in Y_i$ and $b \in Z_j$. It was shown in \cite{Pillay2013} that this result follows from the fact that the formula $\mu_x(\phi(x, y) \wedge \psi(x, z))$ is stable, so that the partitions $(Y_i), (Z_j)$ can be seen to exist by a general stability-theoretic argument. For this reason, we call this a ``stationarity theorem''.
 
 Our stationarity theorem is somewhat stronger than the one in \cite{Tao2012}, even restricted to the intersection of two definable sets. Given $\phi(x, y)$ and $\psi(x, z)$, we find partitions $X_1, \ldots, X_m$, $Y_1, \ldots, Y_n$ and $Z_1, \ldots, Z_p$ such that for any $(i, j, k)$, the definable sets $\phi(x, a)$ and $\psi(x, b)$ behave like independent random variables on the probability space $X_i$, with measure governed by the sets $Y_j$ and $Z_k$ containing $a$ and $b$ respectively. Moreover, the random variables $\phi(x, a)$ and $\psi(x, b)$ are found to be independent in the probabilistic sense from definable sets $\chi(x)$ contained in $X_i(x)$.
  As a corollary, if $\mu(\phi(x, a) \wedge X_i(x)) > 0$ and $\mu(\psi(x, b) \wedge X_i(x)) > 0$, then $\mu(\phi(x, a) \wedge \psi(x, b) \wedge X_i(x)) > 0$. Therefore our stationarity theorem is an amalgamation theorem.
 
In the case of three sets $\phi(x, y, z)$, $\psi(x, y, t)$ and $\chi(x, z, t)$, our stationarity theorem says that after applying some suitable base changes to the domain, we find compatible partitions of the triples $X\times Y \times Z$, $X \times Z \times T$ and  $X \times Y  \times T$  into irreducible varieties such that for a generic triple $a, b, c$, the sets $\phi(x, a, b)$, $\psi(x, a, c)$ and $\chi(x, b, c)$ behave like independent random variables on any domain in the partition. In particular, the measure $\mu(\phi(x, a, b) \wedge \psi(x, a, c) \wedge \chi(x, b, c))$ is uniquely determined by $\tp(a, b)$, $\tp(a, c)$ and $\tp(b, c)$, and this is a direct generalisation of Tao's original result. The reliance on base change is essential though, as we show at the end of Section \ref{subsection: classical hypergraph regularity}.

\bigskip

We conclude this introduction by highlighting some related work concerning Szemer\'edi regularity in model theory.  Model theory has been largely successful in finding settings where  the classical Szemer\'edi graph and hypergraph regularity lemmas can be strengthened. \cite{ChernikovStarchenko2016} shows that the classical hypergraph regularity lemmas can be strengthened for definable hypergraphs in NIP structures. See also \cite{ChernikovTowsner2020} for an extension of those results to functions and for a proof of a strong regularity   lemma when the edge-relation is $\mathrm{NIP}_2$.
 Our results lie at the opposite end of the model theoretic spectrum, since we work here in a  simple unstable theory. 
 
 The recent results of \cite{TerryWolf2021} study hypergraph regularity in finite fields without any definability assumptions on the edge-relations but under some new combinatorial restrictions. These conditions aim to generalise stability in the case of graphs to the setting of hypergraphs. It is not yet clear  if our approach for treating definable hypergraphs can be connected to this combinatorial project.
 
Definable graphs in finite fields coming from difference varieties of finite total dimension have already been considered in \cite{DzamonjaTomasic2017}. In that work, the authors adapt Tao's approach to Szemer\'edi regularity to the definable measure in ACFA over definable sets of finite total dimension, and they also consider graphons. This measure was first shown to be definable in \cite{RytenTomasic2006}, drawing on the twisted Lang-Weil estimates of \cite{HrushovskiFrob}\footnote{The manuscript we reference dates from 2022 but the twisted Lang-Weil estimates were first proved by Hrushovski in the early 2000s.}. \cite{DzamonjaTomasic2017} also adapt the results of Tao on polynomial expansion to setting of varieties of  finite total dimension. In our work, we   use a new approach to derive the algebraic graph and hypergraph regularity lemmas.

Finally, the results of \cite{Tomasic2006} establish   probabilistic independence results for definable sets in pseudofinite fields. This can be seen as a weak version of our stationarity theorem. The approach for that theorem in \cite{Tomasic2006} is quite different from the approach we take here and it is not clear if it yields easily the general Stochastic Independence Theorem which we need for our algebraic hypergraph regularity lemma.

\bigskip

 The paper is structured as follows. In the first section, we revisit classical results about quantifier-elimination in ACFA and we give a new characterisation of the definable measure for definable sets of finite total dimension. 
 
 In the second section, we introduce systems of difference fields and varieties. We define the notion of \emph{definable \'etale-edge-uniformity} and prove a natural hypergraph regularity lemma at the level of the theory ACFA. See Theorem \ref{theorem: definable etale edge uniformity in ACFA}.
 
 In the third section, we prove the stochastic independence theorem and the stationarity theorem. We define quasirandomness at the level of ACFA, and we show that regular systems of varieties are quasirandom. 
 
 In the fourth section, we apply the twisted Lang-Weil estimates to deduce the algebraic hypergraph regularity lemmas. We prove the classical equivalence of Gowers in the \'etale setting and we show how to decompose \'etale systems into disjoint sections to recover more classical regular decompositions of hypergraphs. Our main results are Theorems \ref{theorem: combinatorial hypergraph regularity with covers} and \ref{theorem: combinatorial regularity, classical partitions}.
 
 The results in this paper were arrived at by joint work  that was carried out in two stages. A first algebraic hypergraph regularity lemma was proved by the second named author and appeared in his PhD thesis. The proof used a direct combinatorial argument which relied on a weaker form of the stationarity theorem, and was restricted to the setting of Galois-rigid pseudofinite fields. This setting is discussed after Theorem \ref{theorem: combinatorial regularity, classical partitions}. The current version of this paper is part of the PhD thesis work of the first named author. Both authors are greatly indebted to Ehud Hrushovski, their common PhD supervisor, for his guidance and advice on this project.

\section{The Definable Measure in ACFA}\label{section: definable measure}
 We   study the definable measure on   definable sets in ACFA of finite total dimension. The existence of this measure is implicit in \cite{HrushovskiFrob} and a detailed presentation can be found in \cite{RytenTomasic2006}. In that paper, the authors show that it is possible to use the same kind of quantifier-elimination as is used in \cite{ChatzidakisVanDDMacintyre1992} to construct the definable measure. We will need more fine-grained information about this definable measure    so we find it easier to repeat the presentation from the beginning.

\subsection{Some background and notation}

 \medskip
 
 We use the language of rings with a difference operator $\sigma$.  A difference field is a field $K$ where $\sigma$ is an endomorphism. We say $K$ is inversive if $\sigma$ is an automorphism of $K$. In this paper, we are careful to distinguish the notions of  field, difference field, and  inversive difference fields. If $A$ is a subset of $K$, then $A_\si$ will denote the smallest difference field containing $A$ and $A^{inv}$ will denote the smallest inversive difference field containing $A$.

Let $A \leq K$ be a difference field.  A difference variety in $K$ over $A$ is the solution set of  a finite set of difference equations in $\si$ over $A$.  Note that difference varieties can use positive powers of $\si$ but not negative powers.

 \medskip
 
 \noindent \textbf{Convention:} We will usually discuss difference varieties, so ``variety'' will refer to difference varieties. We will be careful to say ``algebraic variety'' if we want to refer to the  algebraic setting.
 
 \medskip

As in the   algebraic setting, if $X$ is a variety over $A$, we say that $X$ is irreducible if $X$ cannot be expressed as a finite union of proper subvarieties. Note that we do not assume varieties to be irreducible.
When $A$ is an algebraically closed inversive difference field and $X$ is irreducible over $A$, then $X$ is absolutely irreducible, meaning that for every extension $B$ of $A$, $X$ remains irreducible over $B$. See \cite{Cohn1965} or \cite{Levin2008} for detailed presentations of difference algebra.

If $X$ is an irreducible variety over a difference field $A \leq K$, then we can define the difference function field over $A$ associated to $X$ in the usual way. We denote it $A(X)_\si$. Define the \emph{transformal dimension} of $X$ over $A$ to be the maximal cardinality of a set $S \subseteq A(X)_\si$ such that the elements $\si^i(x)$ where $x \in S$ and $i \geq 0$ are algebraically independent over $A$. When $X$ has transformal dimension $0$, we define the \emph{total dimension} of $X$ to be the transcendence degree of $A(X)_\si$ over $A$. We write $dim(X)$ for the total dimension of $X$ over $A$. See section 4.1 of \cite{HrushovskiFrob} for a more complete account of various notions of dimension in difference algebra.

\medskip

Recall that ACFA is the model completion of the theory of inversive difference fields. See \cite{ChatzidakisHrushovski1999} for an explicit axiomatisation.  ACFA is not complete but all completions of ACFA are obtained by specifying the field characteristic and the action of $\si$ on the algebraic closure of the prime field. Recall also that for any $K \models ACFA$ the fixed field $\si(x) = x$ is a model of the theory of pseudofinite fields. The reader is referred to the first few pages of \cite{ChatzidakisHrushovski1999} for the basic model theory of ACFA. In this paper we will revisit quantifier elimination down to existential formulas and the independence theorem.

The fundamental theorem of \cite{HrushovskiFrob} says that ACFA is the asymptotic theory of the difference fields $K_q$, where $q$ is a prime power, $K_q$ is the algebraic closure of the field with $q$ elements and $\si$ is interpreted as $x \mapsto x^q$. We will rely heavily on the twisted Lang-Weil estimates of \cite{HrushovskiFrob}.  In Section \ref{subsection: definable measure basics} we will use some algebraic corollaries of the main theorem of \cite{HrushovskiFrob}, and we will use the full strength of this theorem in Section \ref{section: combinatorics} when we discuss asymptotic counting estimates.

\subsection{Galois formulas} 
We review   quantifier-elimination in ACFA. 
Let $A$ be a  difference field. A finite Galois extension $L$ of $A$ will be said to be invariant if $\sigma$ extends to some endomorphism of $L$. The join of all invariant finite Galois extensions of $A$ will be called the \emph{effective algebraic closure of $A$} and we write $A^{e}$ for this field.
Combining  Corollary 1.5 and (2.8) in \cite{ChatzidakisHrushovski1999} yields the following description of types in ACFA:

\begin{lemma}[\cite{ChatzidakisHrushovski1999}]\label{lemma: description of types}
Let $K \models ACFA$, let $A \leq K$ be an inversive difference field and let $a, b$ be tuples in $K$.  Let $A(a)^{inv}$ and $A(b)^{inv}$ be the inversive closure of the fields $A(a)$ and $A(b)$ respectively. Then $\tp(a / A) = \tp(b / A)$ if and only if there is a difference field isomorphism between $(A(a)^{inv})^{e} \to (A(b)^{inv})^{e}$ fixing $A$ and sending $a$ to $b$.
\end{lemma}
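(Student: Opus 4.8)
The plan is to deduce the lemma from two results of \cite{ChatzidakisHrushovski1999}: Corollary~1.5, which is the same statement with the model-theoretic algebraic closure in place of the effective algebraic closure --- that is, for a difference subfield $A\leq\C\models ACFA$ one has $\tp(a/A)=\tp(b/A)$ if and only if there is a difference-field isomorphism $\acl(A\cup\{a\})\to\acl(A\cup\{b\})$ over $A$ sending $a$ to $b$, which follows from the axioms of ACFA by a back-and-forth --- together with the structural fact~(2.8), which describes how $\si$ propagates from a difference field to its field-theoretic algebraic closure: once the difference structure on the effective algebraic closure is fixed, the extension of $\si$ to the full algebraic closure is ``generic'', and any two such generic extensions are conjugate over the effective algebraic closure. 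The content of the lemma beyond Corollary~1.5 is thus that the passage from $(A(a)^{inv})^e$ to $\acl(A\cup\{a\})$ adds nothing at the level of types.

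The forward direction is formal. Passing to a monster model $\C\succeq K$, choose $\alpha\in\Aut(\C/A)$ with $\alpha(a)=b$. As $\alpha$ is an automorphism of $\C$ as a difference field fixing $A$, it restricts to a difference-field isomorphism $A(a)^{inv}\to A(b)^{inv}$, carries the field-theoretic algebraic closure of the first onto that of the second, and --- commuting with $\si$ --- sends any finite Galois extension to which $\si$ restricts as an endomorphism to another one with the same property. Hence $\alpha$ restricts to a difference-field isomorphism $(A(a)^{inv})^e\to(A(b)^{inv})^e$ over $A$ with $a\mapsto b$. (Implicit here is that $(A(a)^{inv})^e$, computed inside $\C$ with the induced difference structure, realises the abstract effective algebraic closure of $A(a)^{inv}$; this is routine difference algebra, using that $\si_{\C}$ permutes the invariant finite Galois extensions of $A(a)^{inv}$ and so stabilises their join.)

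For the converse, let $\theta\colon(A(a)^{inv})^e\to(A(b)^{inv})^e$ be a difference-field isomorphism over $A$ with $\theta(a)=b$, and work inside $\C$. By a standard fact about ACFA, $\acl(A\cup\{a\})$ and $\acl(A\cup\{b\})$ are algebraically closed inversive difference fields, and as fields they are algebraic closures of $(A(a)^{inv})^e$ and $(A(b)^{inv})^e$ respectively, carrying the generic extension of $\si$ dictated by the ACFA axioms. First extend $\theta$ arbitrarily to an isomorphism of these underlying field algebraic closures; this need not respect $\si$, but the two difference structures it now relates agree on the effective algebraic closure, so by~(2.8) they differ by an element of the Galois group over the effective closure. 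Composing with a suitable such element promotes the field isomorphism to a difference-field isomorphism $\acl(A\cup\{a\})\to\acl(A\cup\{b\})$ over $A$ still sending $a$ to $b$, and Corollary~1.5 then yields $\tp(a/A)=\tp(b/A)$.

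The main obstacle is the single lifting step in the converse: it is precisely there that one needs that the effective algebraic closure captures all the rigid algebraic data of $A(a)^{inv}$, so that the native difference structures on $\acl(A\cup\{a\})$ and $\acl(A\cup\{b\})$ --- having no choice left to make once the effective closures are matched --- must be intertwined by any field isomorphism extending $\theta$. Granting~(2.8) this is a short argument, and everything else is either the model-completeness packaged in Corollary~1.5 or the bookkeeping above.
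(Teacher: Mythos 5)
The paper does not supply a proof of this lemma: it merely asserts that the statement is obtained by "Combining Corollary 1.5 and (2.8) in \cite{ChatzidakisHrushovski1999}", so there is no independent argument in the paper to compare against. Your write-up is a reasonable unpacking of exactly that citation, and it identifies the one genuinely substantive point correctly: in the converse direction one needs to lift an isomorphism of effective algebraic closures to a difference-field isomorphism of the full model-theoretic algebraic closures, and the only freedom is a Galois twist over the effective closure --- which is the content you are attributing to (2.8). The forward direction is, as you say, bookkeeping. Two minor cautions, neither of which undermines the approach: the phrase "the generic extension of $\si$ dictated by the ACFA axioms" is a little loose, since there is no canonical such extension but rather a conjugacy class of them, which is precisely what makes the lifting step work; and since $A$ is not assumed algebraically closed here, one should keep in mind that the isomorphism in the conclusion fixes $A$ but is allowed to move $\acl(A)$ (it carries $\acl(A)$ into itself as a set because $\acl(A)\subseteq(A(a)^{inv})^e$), which is consistent with the fact that $\tp(a/A)$ does not pin down the action on $\acl(A)$ pointwise. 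Overall your proposal matches the argument the paper's citation is pointing to.
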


We will deduce a restricted form of quantifier-elimination for $A$-definable sets contained in $A$-definable varieties of finite total dimension. We will rely heavily on the next definition for the rest of this chapter.

\begin{definition}\label{definition: galois formula}
Let $A$ be a difference field. We say that a formula $\phi(x)$ over $A$ is a Galois formula   if    there are $A$-definable varieties  of finite total dimension $V_2(x)$ and  $V_1(x,y)$ where  $ y$ is a  tuple of variables, and a finite family of polynomials $\mathcal{P}$ over $A(x)_\si$ such that, in any algebraically closed field $K$ containing $A$
\begin{enumerate}
\item  the set of realisations of  $\phi$ is equal to the projection $V_1(x, y) \to V_2(x)$   
\item for every generic $(a, b) \in V_1$, the difference field $A(a,b)_\si$ is   the splitting field of   $\mathcal{P}$ over $A(a)_\si$ and is a finite invariant extension of $A(a)_\si$. 
\end{enumerate}

\end{definition}

We do not set an explicit syntax for Galois formulas, but it is clear how to do this if one so wishes. We will assume that we have decided on some explicit syntax for Galois formulas, and we will assume that the family of polynomials $\mathcal{P}$ in Definition \ref{definition: galois formula} is always explicitly given. 

When $\phi$ is a Galois formula over $A$ and  $\mathcal{P}$, $V_1, V_2$  are as in the definition and $a$ is a generic point of $V_2$, we will  say that the splitting field $L$ of $\mathcal{P}$ over $A(a)_\si$ is \emph{the field extension associated to $\phi$}. This terminology applies even if we are in a field $K$ where $a$ does not belong to the set $\phi$. Note that in such a situation, $L$ is always a finite invariant extension of $A(a)_\si$. 

 We will often work with the perfect hull $A(a)_\si^{insep}$, in which case the  \emph{Galois extension associated to $\phi$} is the splitting field of $\mathcal{P}$ over $A(a)_\si^{insep}$.

\begin{lemma}\label{lemma: quanfitier elimination transf dim 0}
Let $K\models ACFA$ and $A\leq K$ an inversive subfield. Let $\phi(x)$ be a definable set over $A$   contained in a variety of  finite total dimension. In $K$, $\phi$ is equivalent   to a Galois formula over $A$.
\end{lemma}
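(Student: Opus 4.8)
The plan is to unwind Lemma~\ref{lemma: description of types} into a statement about definable sets. We want to show that a definable set $\phi(x)$ over $A$, contained in an $A$-definable variety $V$ of finite total dimension, is equivalent in $K$ to a Galois formula. The key point is that in ACFA, a type $\tp(a/A)$ of an element of finite total dimension is determined by the isomorphism type of the finite difference-field extension $(A(a)_\si^{inv})^e$ over $A$, and since $V$ has finite total dimension, there are only finitely many ``relevant'' such extensions once we fix the field $A(a)$-structure, because a generic point of an irreducible subvariety has $(A(a)_\si)^e$ a \emph{finite} invariant Galois extension of $A(a)_\si$.

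First I would reduce to the case where $V$ is irreducible and where $\phi$ contains a generic point of $V$; otherwise $\phi$ is (generically) empty in $V$ or we can stratify $V$ into finitely many irreducible pieces and work on each. On an irreducible $V$ with generic point $a$, let $L = (A(a)_\si)^e$ be the effective algebraic closure; this is a finite invariant Galois extension of $A(a)_\si$, say split by a finite family of polynomials $\mathcal{P}$ over $A(a)_\si$. Writing $b$ for a tuple of generators of $L$ over $A(a)_\si$, the pair $(a,b)$ is a generic point of an $A$-definable variety $V_1$ of finite total dimension projecting onto $V = V_2$, and by construction $A(a,b)_\si$ is the splitting field of $\mathcal{P}$ over $A(a)_\si$: so we have the data of a Galois formula $\psi$ in the sense of Definition~\ref{definition: galois formula}.

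Next I would argue that $\phi$ and (a suitable Boolean combination of translates of) $\psi$ agree. By Lemma~\ref{lemma: description of types}, whether a point $a'$ of $V$ lies in $\phi$ depends only on the difference-isomorphism type of $(A(a')_\si^{inv})^e$ over $A$; but this isomorphism type is controlled by how $\si$ acts on the roots of $\mathcal{P}$, i.e.\ by which element of the Galois group $\mathrm{Gal}(L/A(a)_\si)$ the extension of $\si$ induces (via the natural action, using that $\si$ fixes $A$ and acts on $A(a)_\si$ through its action on $a'$). Concretely, $a' \in \phi$ iff the point $a'$ lifts to a point $b'$ of $V_1$ compatible with $\si$ in one of a fixed finite set $S$ of ``patterns'' of $\si$-action on the roots — a set $S$ determined once and for all by $\phi$ via Lemma~\ref{lemma: description of types}. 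Each such pattern is an existential ($A$-definable) condition on $a'$ asserting the existence of $b'$ over $V_1$ with prescribed $\si$-relations, so $\phi$ becomes a finite union of projections of $A$-definable varieties of finite total dimension, each satisfying the splitting-field condition — which is exactly a finite disjunction of Galois formulas, hence (by closing Galois formulas under finite disjunction, or by absorbing the disjunction into the family $\mathcal{P}$) a Galois formula.

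The main obstacle I expect is the bookkeeping in the previous paragraph: one must be careful that the effective algebraic closure $(A(a')_\si^{inv})^e$ for a \emph{non-generic} $a'$ is still controlled by the \emph{generic} data $(\mathcal{P}, L)$, i.e.\ that specialisation does not introduce new invariant Galois extensions that were invisible at the generic point. This is handled by constructibility: the condition ``$A(a')_\si$ has splitting field of $\mathcal{P}$ of a given degree with a given $\si$-action'' cuts $V$ into finitely many $A$-definable pieces, and on each piece Lemma~\ref{lemma: description of types} pins down membership in $\phi$; the finitely many bad loci where the degree drops are lower-dimensional and handled by induction on $\dim(V)$. One also needs to pass between $A(a)_\si$ and its inversive/perfect hulls, as flagged in the remarks after Definition~\ref{definition: galois formula}, but since $A$ is inversive and we only care about finite extensions this is routine.
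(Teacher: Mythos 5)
There is a genuine gap. You assert that $L = (A(a)_\si)^e$ is a \emph{finite} invariant Galois extension of $A(a)_\si$, but by definition the effective algebraic closure is the join of \emph{all} finite invariant Galois extensions, and this join is usually an infinite extension. For instance, over any difference field where $\si$ restricts to the identity on the base (say $\Qq$ with $\si = \mathrm{id}$), every finite Galois extension is invariant, so the effective algebraic closure is the full separable algebraic closure. Consequently, the variety $V_1$ you build from generators of $L$ over $A(a)_\si$ does not have finite total dimension and is not a difference variety in the required sense, so the Galois-formula data $(\mathcal{P}, V_1, V_2)$ is ill-defined, and the whole construction in your second paragraph does not get off the ground. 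The same misconception appears in your opening summary, where you describe $(A(a)_\si^{inv})^e$ as a "finite difference-field extension."

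The missing ingredient is compactness. Lemma \ref{lemma: description of types}, combined with the preliminary Claim that $(A(a)_\si^{insep})^e$ is inversive (which is non-trivial and is the reason the paper proves it rather than dismissing the inversive/perfect hull issue as routine), shows that $\tp(a/A)$ is determined by the isomorphism type of the possibly infinite extension $(A(a)_\si^{insep})^e / A$; equivalently, any two points in $A$-definable varieties of finite total dimension with the same type satisfy the same Galois formulas, and conversely. Each Galois formula pins down only a finite fragment of this Galois data, so to pass from "Galois formulas separate types" to "$\phi$ is a single Galois formula" you need a compactness argument in the type space, using closure of Galois formulas under Boolean combinations within a fixed ambient variety (the paper records closure under disjunctions explicitly). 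Concretely: for each $p$ containing $\phi$ one extracts a Galois formula $\chi_p \in p$ with $\chi_p \vdash \phi$, and another compactness step gives $\phi$ as a finite disjunction of the $\chi_p$. Your proof can be repaired by replacing "$L = (A(a)_\si)^e$" by "a sufficiently large finite invariant Galois subextension $L \subseteq (A(a)_\si)^e$," but the choice of what counts as sufficiently large is exactly this compactness step, so it cannot be elided. Once that is in place, your further remarks about stratifying $V$ by constructibility and inducting on dimension are a workable but heavier finish than the type-space argument the paper uses.
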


\begin{proof}
We use the following claim:

\begin{claim}
Suppose $a \in K$ lies in a variety over $A$ of finite total dimension. Then $(A(a)_\si^{insep})^e$ is inversive. 
\end{claim}

\begin{proof}[Proof of Claim]
It is enough to prove that $\si^{-1}(a) $ is in $(A(a)_\si^{insep})^e$. Since the extension of $A(a)_\si^{insep}$ by $\si^{-1}(a)$ is clearly invariant, it is enough to show that $\si^{-1}(a)$ is algebraic over $A(a)_\si$. Since $a$ is algebraic over $\si(a), \ldots, \si^n(a)$ for $n$ sufficiently large and $A$ is inversive, the claim follows.
\end{proof}

Observe  that, up to logical equivalence, Galois formulas are preserved under disjunctions. By the claim and Lemma \ref{lemma: description of types}, it is enough to show that for $a, b \in K$ contained in $A$-definable varieties of finite total dimension, $(A(a)_\si^{insep})^e  \cong_A A(b)_\si^{insep})^e$ if and only if $a$ and $b$ satisfy the same Galois formulas over $A$. This is clear. See Proposition 3.7 in \cite{Johnson2019} for a similar proof in a similar context. 
\end{proof}

      When $\phi$ is a Galois formula over $A$, we define the total dimension of $\phi$ over $A$, or $dim(\phi)$, as the total dimension of the variety $V_1$ such that $\phi$ is the projection $V_1 \to V_2$. We also write $cl_A(\phi)$ for the smallest difference variety over $A$ containing $\phi$.

\subsection{The definable measure in ACFA on definable sets of finite total dimension}  \label{subsection: definable measure basics}
 
Let $K \models ACFA$ and let $A \leq K$ be a difference field such that $K$ is $|A|^+$-saturated. The notions of transformal and total dimension clearly extend to quantifier-free definable sets over $A$. For $n \geq 0$, write  $qfDef_n(A) $ for the quantifier-free $A$-definable sets with total dimension $\leq n$. Write $DF_A^n$ for the family of finitely generated difference field extensions of $A$ of transcendence degree $\leq n$. Write $IDF_A^n$ for the family of finitely generated inversive difference field extensions of $A$ of transcendence degree $\leq n$.   There are obvious $1$-$1$-correspondences between $DF_A^n$, $IDF_A^n$, and irreducible difference varieties of dimension $n$ over $A$.
   
 We will see that the transformal function fields provide the correct setting to describe the measure on sets of transformal dimension $0$. We begin by using the main theorem of \cite{HrushovskiFrob} to define the measure on quantifier-free sets of transformal dimension $0$. We reserve the asymptotic component of this theorem for Section \ref{section: combinatorics}, in Theorem \ref{theorem: twisted lang-weil}.

\begin{theorem}[\cite{HrushovskiFrob}]\label{theorem: definable measure on qf sets}
let $K \models ACFA$. For every $n$, there is a definable measure $\mu_n : qfDef_n(K) \to \Qq^{\geq 0}$ such that  
\begin{enumerate}
\item $\mu_n(X) = 0$ if $X$ has total dimension $< n$
\item For every $n$, $\mu_n(\bigwedge_{i = 1}^n \si(x_i) = x_i) = 1$  
\item Fubini holds for the system of measures $(\mu_n)$: if $f : Y \to X$ is a definable surjective map between varieties such that for sufficiently generic $a \in X$, $\mu_n(f^{-1}(a)) = \gamma > 0$ and such that $\mu_k(X) = \lambda > 0$, then $\mu_{n+k}(Y) = \gamma \lambda$. 
\end{enumerate}
\end{theorem}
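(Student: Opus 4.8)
The plan is to build $\mu_n$ directly out of the twisted Lang--Weil estimates of \cite{HrushovskiFrob}, recalled below in Theorem \ref{theorem: twisted lang-weil}; we use them here in the form: for a quantifier-free $A$-definable set $X$ of transformal dimension $0$ and total dimension $d$, one has $|X(K_q)| = (c_X + o(1))\, q^{d}$ as $q \to \infty$, where $c_X \in \Qq^{>0}$ is a rational number controlled by bounded Galois-theoretic data attached to $X$. We then \emph{define} $\mu_n(X)$ to be $c_X$ when $d = n$ and $0$ when $d < n$. To make sense of $c_X$ for an irreducible $X$ we invoke Lemma \ref{lemma: quanfitier elimination transf dim 0}: after passing to the inversive, and then the perfect, hull of $A$ (as in the Claim in that lemma's proof) the set $X$ is, up to a subvariety of strictly smaller total dimension, presented as a finite invariant cover $V_1 \to V_2$ of an irreducible variety $V_2$ with $\dim V_2 = d$, and the twisted Lang--Weil count reads $c_X$ off from the Galois group of the cover together with the Frobenius twist. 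For a general quantifier-free set one reduces to this case by writing it as a Boolean combination of varieties, discarding all pieces of total dimension $< n$, and applying inclusion--exclusion to the top-dimensional ones.

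The substance of the theorem is the \emph{definability} of $\mu_n$: for a formula $\phi(x,y)$ over $A$ all of whose instances $\phi(x,a)$ have transformal dimension $0$ and total dimension $\le n$, the map $a \mapsto \mu_n(\phi(x,a))$ is $A$-definable. There are two points. First, the degrees and number of equations cutting out the Galois covers attached to the $\phi(x,a)$ are bounded in terms of $\phi$ alone, hence so are the orders of the associated Galois groups; thus $\mu_n$ takes only finitely many values on the family $(\phi(x,a))_a$. Second, by Lemma \ref{lemma: description of types} the rational $c$ attached to $\phi(x,a)$ depends only on $\tp(a/A)$, and --- using that ACFA is the asymptotic theory of the $K_q$ together with the uniformity of the twisted Lang--Weil estimates in the parameters --- the condition ``$\dim \phi(x,a) = n$ and $|\phi(x,a)(K_q)| = (r + o(1))q^{n}$'' is, for large $q$ and hence in ACFA, pinned down by a first-order statement about the generic point of $\cl_A(\phi(x,a))$ describing the relevant Galois configuration. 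A compactness argument then upgrades the finitely-valued, type-definable map $a \mapsto c$ to a definable one; the same argument shows the value is independent of the chosen presentation of $\phi(x,a)$ as a Galois formula, since it is characterised by the asymptotic count.

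Granting existence and definability, the three listed properties follow quickly. Property (1) is immediate from the definition. For (2), the fixed field of $K_q$ is exactly $F_q$, so $\bigwedge_{i=1}^n \si(x_i)=x_i$ has precisely $q^n$ points in $K_q$, whence measure $1$. For (3), writing $|Y(K_q)| = \sum_{a \in X(K_q)} |f^{-1}(a)(K_q)|$, split $X$ into the definable cofinite locus $X^{\circ}$ where $f^{-1}(a)$ has total dimension $n$ with $\mu_n(f^{-1}(a)) = \gamma$ and its complement; working on the irreducible components of $Y$ and using additivity of transcendence degree, the complement and its preimage have total dimension $< k$ and $< n+k$ respectively, so the uniform estimates give $\sum_{a \in X^{\circ}(K_q)}|f^{-1}(a)(K_q)| = (\gamma+o(1))q^{n}\cdot(\lambda+o(1))q^{k}$ while the rest contributes $o(q^{n+k})$, hence $\mu_{n+k}(Y) = \gamma\lambda$.

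The genuine obstacle is the definability claim: everything else is either a reduction to, or a direct point-counting consequence of, the twisted Lang--Weil estimates. Extracting from \cite{HrushovskiFrob} that the leading coefficient $c$ depends on $\tp(a/A)$ in a way that is simultaneously \emph{uniform} (finitely many values over the whole family) and \emph{first-order} is precisely the difference-field analogue of the analysis of the counting measure on pseudofinite fields in \cite{ChatzidakisVanDDMacintyre1992}, and is the content we are recounting following \cite{RytenTomasic2006}. Two smaller technical points to watch are the treatment of inseparability --- one must pass to perfect hulls before the Galois picture is clean --- and the bookkeeping in descending from irreducible varieties to arbitrary quantifier-free sets over a base field that is neither algebraically closed nor inversive.
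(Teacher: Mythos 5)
The paper does not prove this theorem: it is stated with attribution to \cite{HrushovskiFrob} and the paper explicitly defers to \cite{RytenTomasic2006} for a detailed construction, noting only that the extension from quantifier-free sets to arbitrary definable sets of finite total dimension is straightforward via Lemma \ref{lemma: quanfitier elimination transf dim 0}. So there is no internal proof to compare against. Your reconstruction --- define $\mu_n$ as the leading coefficient in the twisted Lang--Weil point count, reduce to irreducible pieces, read the coefficient off the difference-algebra data, and handle definability by observing that the coefficient is type-definable in the parameter and takes finitely many values --- is the standard strategy and matches in spirit the pseudofinite-field analysis in \cite{ChatzidakisVanDDMacintyre1992} transported to ACFA, which is exactly the line the cited references take. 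Your treatment of properties (1) and (2) is correct, and the Fubini argument via double counting $|Y(K_q)| = \sum_{a}|f^{-1}(a)(K_q)|$ and absorbing the non-generic locus into the error term is the right shape, though you elide the verification that the preimage of the non-generic locus really has total dimension $< n+k$.

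Two things are worth tightening. First, for the quantifier-free case you do not actually need Lemma \ref{lemma: quanfitier elimination transf dim 0} or the Galois-formula machinery: a quantifier-free set is already a Boolean combination of difference varieties, and after discarding lower-dimensional pieces and decomposing into absolutely irreducible top-dimensional components the leading coefficient of each piece is given directly by the degree formula $\mu_n(L/A) = [L':\sigma(L')]/[L:\sigma(L)]_{insep}$ of Proposition \ref{proposition: characterisation of measure of difference variety}(3), with no Galois cover in sight. The Galois-cover apparatus (and Proposition \ref{proposition: alternative characterisation of measure}) enters only when one later extends to existential formulas. Invoking Lemma \ref{lemma: quanfitier elimination transf dim 0} here risks a circularity, since that lemma concerns definable sets generally while the present theorem is the base case from which the general measure is built. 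Second, you are candid that the definability claim is the genuine content and that your compactness-plus-finite-image sketch defers the uniformity of the Galois-theoretic bounds to \cite{HrushovskiFrob} and \cite{RytenTomasic2006}; this honesty is appropriate, but the argument as written does stop short of establishing that the map $a \mapsto \mu_n(\phi(x,a))$ is first-order, which is where the real work lies.
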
 
 
Fixing $K \models ACFA$ and $A \leq K$, we see that $\mu_n$ on $qfDef_n(A)$ corresponds uniquely to a  function $\mu_n:  DF_A^n \to \Qq^{\geq 0}$ and to a function $IDF_A^n \to \Qq^{\geq 0}$. Indeed, even though an element $L$ in $IDF_A^n$ is usually not finitely generated as a difference field over $A$, it is equal to the inversive hull of some $L \in DF_A^n$ which is uniquely determined up to transformally inseparable extensions. 
 
For $L$ an arbitrary inversive difference field, write $L^{tr}$ for the field $L$ with the automorphism $\sigma^{tr} = \sigma^{-1}$. Then $tr$ is a map $IDF_A^n \to IDF_{A^{tr}}^n$. If $L \in DF_A^n$ and $L' \in DF_{A^{tr}}^n$, we say that $L$ and $L'$ are inversion-dual if $L^{inv} \cong_A ((L')^{inv})^{tr}$, where $L^{inv}$ is the inversive hull of $L$.

We have additional information about the measure $\mu_n$:
\begin{proposition}[\cite{HrushovskiFrob}]\label{proposition: characterisation of measure of difference variety}
Let $K \models ACFA$ and take $(\mu_n)$ the definable measures from Theorem \ref{theorem: definable measure on qf sets}.
\begin{enumerate}
\item For all $n$, $\mu_n$ is invariant under base change in the following sense.  Let $A \leq A'$ be algebraically closed inversive difference fields, let $L \in DF_A^n$ and let $L'$ be the field of fractions of $L \otimes_A A'$. Then $\mu_n(L/A) = \mu_n(L'/A')$.
\item For all $n$, and for all difference fields $A, A' \leq K$, if $A\leq A' \leq A^{inv}$, then $\mu_n(L/A) = \mu_n(L'/A')$ when $L \in DF_A^n$, $L' \in DF_{A'}^n$, and  $L' \leq (L^{insep})^{inv}$, where $L^{insep}$ is the perfect hull of $L$.
\item Let $n \geq 0$, let $A \leq K$ be algebraically closed and inversive and let $L \in DF_A^n$ and $L' \in DF_{A^{tr}}^n$ with $L, L'$ inversion dual. Then 
\[
\mu_n(L/A) = \frac{[L' : \sigma(L')]}{[L : \sigma(L)]_{insep}}.
\]
\end{enumerate}
\end{proposition}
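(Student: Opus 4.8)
The plan is to deduce all three parts from the main theorem of \cite{HrushovskiFrob} via the twisted Lang--Weil estimates, by reducing each statement to a counting statement in the structures $K_q$ and passing to the asymptotic theory. Recall that by the fundamental theorem of \cite{HrushovskiFrob}, if $X$ is a quantifier-free $A$-definable set of transformal dimension $0$ and total dimension $n$, then for any specialisation of the parameters into a difference field $K_q = (F_q^{alg}, x\mapsto x^q)$ with $q$ large, the number of points of $X(K_q)$ lying over a single Frobenius conjugacy class is $c\, q^{n} + O(q^{n - 1/2})$ for an explicitly computable rational constant $c$, and the definable measure $\mu_n(X)$ is exactly this leading constant $c$ (normalised by $(1)$ and $(2)$ of Theorem \ref{theorem: definable measure on qf sets}). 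So the strategy is uniform: express each equality of measures as an equality of leading coefficients in these counting formulas, and then verify the corresponding algebraic identity about function fields and degrees of $\sigma$.

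For part $(1)$, invariance under base change, the point is that the leading constant $c$ in Hrushovski's estimate depends only on the difference function field $L = A(X)_\si$ as an abstract finitely generated difference field extension of the algebraically closed inversive field $A$ --- more precisely on the geometric data over $A^{alg}$ --- and not on the particular $A$ inside which it is realised. Since $A$ and $A'$ are both algebraically closed and inversive and $L' = \mathrm{Frac}(L \otimes_A A')$ is the base change, $L'/A'$ has the same total dimension $n$ and gives rise to the same geometric invariants (the counting formula of \cite{HrushovskiFrob} is stated after base change to the algebraic closure anyway), so the leading constants agree; alternatively, invoke $2$-saturation to embed everything into a common $K \models ACFA$ and use that $\mu_n$ is the measure of the \emph{same} quantifier-free formula interpreted over the two parameter sets, together with the fact that $\mu_n$ as defined in Theorem \ref{theorem: definable measure on qf sets} does not reference the parameters. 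For part $(2)$, the statement that passing from $A$ to an intermediate field $A \le A' \le A^{inv}$ and from $L$ to $L' \le (L^{insep})^{inv}$ does not change the measure: this follows because inversive hull and perfect hull do not change the underlying set of $K_q$-points (every element of $K_q$ already has a $\sigma$-preimage and a $p$-th root in $K_q$), so the relevant varieties have the same point counts over $K_q$ for all large $q$, hence the same measure. One should phrase this carefully using the dictionary $IDF_A^n \leftrightarrow qfDef_n(A)$ set up just before the proposition, noting that $L^{inv} \cong_{A} (L'^{insep})^{inv}$-type identifications are exactly what is needed.

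Part $(3)$ is the substantive one and I expect it to be the main obstacle. Here $A$ is algebraically closed and inversive, $L \in DF_A^n$, $L' \in DF_{A^{tr}}^n$ is inversion-dual to $L$ (so $L^{inv} \cong_A ((L'^{inv})^{tr}$), and we must show
\[
\mu_n(L/A) = \frac{[L' : \sigma(L')]}{[L : \sigma(L)]_{insep}}.
\]
The plan is to count: for the difference variety $X$ with $A(X)_\si = L$, a generic algebraic point $a$ of the underlying algebraic variety (over $A^{alg}$, dimension $n$) sits on the difference variety iff its Frobenius image satisfies the difference equations cutting out $X$, and by a Weil-type argument the proportion of algebraic points of the ambient variety that are honest $K_q$-points of $X$ is governed, to leading order, by the index $[L:\sigma(L)]$ up to inseparability corrections --- this is the "how many extensions of $\sigma$" count --- while the dual index $[L':\sigma(L')]$ enters because reversing $\sigma$ (the $tr$ operation) exchanges the roles of "image" and "preimage" of Frobenius, i.e. it is the same count done from the other side. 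Concretely I would: (i) reduce to the case where $L$ is a finitely generated difference field with $L/A$ transformally inseparable issues isolated, using part $(2)$ to absorb perfect hulls so that the $[\,\cdot\,]_{insep}$ only appears once; (ii) write the Frobenius point count for $X(K_q)$ over a fixed conjugacy class as a sum over the finitely many extensions of the partial Frobenius, each contributing $\sim q^n$, the number of such extensions being $[L:\sigma(L)]$ in the separable case; (iii) identify the dual contribution $[L':\sigma(L')]$ by applying the same analysis to the $tr$-twisted variety and using that point counts over $K_q$ are symmetric under $\sigma \leftrightarrow \sigma^{-1}$ since Frobenius is a bijection on $K_q$; (iv) assemble the ratio and match leading coefficients against the normalisation $\mu_n(\bigwedge \sigma(x_i)=x_i)=1$, which pins down the constant and forces the displayed formula. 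The delicate points will be the bookkeeping of inseparable degree (why it appears in the denominator with $L$ but not in the numerator with $L'$ --- this is exactly where inversion-duality is used, since $\sigma = \mathrm{Frob}$ is purely inseparable on one side and its inverse handles the other), and making the "number of extensions of $\sigma$ equals $[L:\sigma(L)]$" step rigorous, which is a standard fact about extending endomorphisms along field extensions but needs the finite-generation and transformal-dimension-zero hypotheses. Throughout I would lean on the explicit form of the twisted Lang--Weil estimate in \cite{HrushovskiFrob} rather than re-deriving it, treating the statement as essentially a translation of that theorem into the language of function fields.
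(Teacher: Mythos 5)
The proposition is attributed to \cite{HrushovskiFrob} and the paper provides no proof of its own: it is imported as an algebraic corollary of the twisted Lang--Weil estimates. So there is no argument of the paper's to compare your sketch against, and the only question is whether your reconstruction is internally sound. Parts (1) and (2) are fine in outline; (2) is exactly the observation that $\mathrm{Frob}_q$ is a bijection on $K_q$ and $p$-th roots are unique there, so passing to inversive or perfect hulls does not alter the $K_q$-points of a quantifier-free set.

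Part (3), however, has a genuine gap in the proposed mechanism. Your step (ii), counting ``extensions of the partial Frobenius'' with $[L:\sigma(L)]$ summands each contributing $\sim q^n$, would place $[L:\sigma(L)]$ in the numerator of the leading coefficient, and this is the wrong degree. Take $L = A(x)_\sigma$ with $\sigma(x)^2 = x$, so that $L = A(x^{1/2^n} : n \geq 0)$, $\sigma(L) = L$ and $[L:\sigma(L)] = 1$, whereas the inversion-dual is $L' = A(x)$ with $\sigma^{tr}(x) = x^2$ and $[L' : \sigma^{tr}(L')] = 2$; the count $|\{x \in K_q : x^{2q} = x\}| = 2q$ gives $\mu_1 = 2 = [L':\sigma^{tr}(L')]$, not $[L:\sigma(L)] = 1$. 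So the leading term is governed by the dual degree, not by $[L:\sigma(L)]$, and the ``$\sigma\leftrightarrow\sigma^{-1}$ symmetry'' of your step (iii) yields an equality of point counts, not a ratio that would place the dual degree in the numerator and $[L:\sigma(L)]_{insep}$ in the denominator. What actually produces the displayed formula is the intersection-number computation in \cite{HrushovskiFrob} (the compactified correspondence against the graph of Frobenius), which is precisely what you describe as the black box; treating it as a black box leaves the formula unproved, and your steps (ii)--(iv) do not replace it.
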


We deduce the following important lemma:

\begin{lemma}\label{lemma: explicit definable measure on varieties}
Let $K \models ACFA$ and let $A \leq K$ be algebraically closed and inversive. Let $n \geq 1$ and let $L, M$ be difference fields with $M/L$ a finite field extension. Then 
\begin{enumerate}
\item if $ L \in DF_A^n$, then  $M \in DF_A^n$ and $[L : \si(L)]=[M : \si(M)]$ and $[L : \si(L)]_{insep} = [M : \si(M)]_{insep}$
\item if $L \in IDF_A^n$, then $M \in IDF_A^n$ and there exist $L_0, M_0 \in DF_A^n$ with $M_0/L_0$ a finite field extension such that $L_0^{inv} = L$ and $M_0^{inv} = M_0L = M$
\item if $M, L \in DF_A^n$, 
there exists $M', L' \in DF_{A^{tr}}^n$ with $M'/L'$ a finite field extension such that $M'$ is  inversion-dual to $M$ and $L'$ is  inversion-dual to $L$.  
\item  if $M, L \in DF_A^n$, $\mu_n(M/A) = \mu_n(L/A)$.\label{equation: explicit definable measure on varieties}
\end{enumerate}
\end{lemma}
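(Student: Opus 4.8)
The plan is to prove (1)--(4) in that order: (3) will be reduced to (1) and (2) over the inversion-dual field, and (4) to (1), (3) and Proposition~\ref{proposition: characterisation of measure of difference variety}(3). In each part the only real issue is the finiteness of a field extension; the rest is degree bookkeeping. For (1): $M/L$ finite is in particular algebraic, so $\mathrm{trdeg}(M/A)=\mathrm{trdeg}(L/A)\le n$, and adjoining finitely many field generators of $M$ over $L$ to a set of difference-field generators of $L$ over $A$ shows $M$ is finitely generated over $A$ as a difference field, so $M\in DF_A^n$. Also $[L:\si(L)]<\infty$, since $L/\si(L)$ is finitely generated and algebraic ($\si$ is a field isomorphism $L\to\si(L)$ and $\si(A)=A$, so $\mathrm{trdeg}(\si(L)/A)=\mathrm{trdeg}(L/A)$). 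Now refine the tower $\si(L)\subseteq M$ inside $M$ in two ways: $[M:L]\,[L:\si(L)]=[M:\si(L)]=[M:\si(M)]\,[\si(M):\si(L)]$, and $[\si(M):\si(L)]=[M:L]$ because $\si$ restricted to $M$ is a field isomorphism carrying $L$ onto $\si(L)$; cancelling $[M:L]$ gives $[L:\si(L)]=[M:\si(M)]$, and the same computation with inseparable degrees (preserved by field isomorphisms) gives the second equality.

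For (2), first note that $M$ is inversive: since $L$ is inversive, $\si(L)=L\subseteq\si(M)\subseteq M$ and $[\si(M):L]=[\si(M):\si(L)]=[M:L]$, forcing $\si(M)=M$; then $M\in IDF_A^n$ by the argument of (1). Now choose a finite tuple $\bar c$ with $M=L(\bar c)$ and a finite tuple $\bar b$ with $A(\bar b)_\si^{inv}=L$, so that $L_0':=A(\bar b)_\si\in DF_A^n$ with $(L_0')^{inv}=L$. Write $\si(\bar c)=r(\bar c)$ for a tuple $r$ of rational functions over $L$; its finitely many coefficients lie in $\si^{-K}(L_0')$ for some $K\ge 0$. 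Put $L_0:=\si^{-K}(L_0')$ — still in $DF_A^n$, being the difference field generated over the inversive field $A$ by $\si^{-K}(\bar b)$, and still with $L_0^{inv}=L$ — and put $M_0:=L_0(\bar c)$. Since $L_0$ is a difference field containing the coefficients of $r$, an immediate induction shows $\si^j(\bar c)\in L_0(\bar c)=M_0$ for all $j$, so $M_0$ is a difference subfield of $M$; then $[M_0:L_0]\le[M:L]<\infty$, so $M_0\in DF_A^n$ and $M_0/L_0$ is a finite field extension; and $M_0^{inv}$ contains $L_0^{inv}=L$ and $\bar c$, hence contains $M=L(\bar c)\supseteq M_0$, so $M_0^{inv}=M$, while $M_0L=L(\bar c)=M$.

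For (3), I first check $[M^{inv}:L^{inv}]<\infty$, with $e:=[M:L]$. For each $k\ge 0$ the field $L^{inv}\cdot\si^{-k}(M)$ is a subextension of $M^{inv}/L^{inv}$, of degree at most $[\si^{-k}(M):\si^{-k}(L)]=e$ over $L^{inv}$; these fields increase with $k$ and their union contains every $\si^{-k}(M)$, hence equals $M^{inv}$. An increasing chain of subextensions of degree $\le e$ over $L^{inv}$ has non-decreasing, bounded degrees, so it stabilises, giving $M^{inv}=L^{inv}\cdot\si^{-K}(M)$ for some $K$ and $[M^{inv}:L^{inv}]\le e$. Hence $L^{inv},M^{inv}\in IDF_A^n$ with $M^{inv}/L^{inv}$ a finite field extension, so $(L^{inv})^{tr},(M^{inv})^{tr}\in IDF_{A^{tr}}^n$ with $(M^{inv})^{tr}/(L^{inv})^{tr}$ finite. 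Applying (2) over the algebraically closed inversive difference field $A^{tr}$ yields $L',M'\in DF_{A^{tr}}^n$ with $M'/L'$ a finite field extension, $(L')^{inv}=(L^{inv})^{tr}$ and $(M')^{inv}=(M^{inv})^{tr}$; applying $tr$ to these equalities shows $L'$ is inversion-dual to $L$ and $M'$ is inversion-dual to $M$.

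For (4), use (3) to pick $L',M'\in DF_{A^{tr}}^n$ inversion-dual to $L$ and $M$ with $M'/L'$ finite. Proposition~\ref{proposition: characterisation of measure of difference variety}(3) gives
\[
\mu_n(L/A)=\frac{[L':\si(L')]}{[L:\si(L)]_{insep}},\qquad\mu_n(M/A)=\frac{[M':\si(M')]}{[M:\si(M)]_{insep}}.
\]
By (1) applied to the finite extension $M'/L'$ the numerators coincide, and by (1) applied to $M/L$ the denominators coincide, so $\mu_n(L/A)=\mu_n(M/A)$. I expect the crux to be (2): the naive choice $L_0(\bar c)_\si$ need only be finitely generated over $L_0$ as a difference field, not as a field, and shifting $L_0'$ by $\si^{-K}$ is exactly what forces $L_0(\bar c)$ to be already $\si$-closed, hence a genuine finite extension of $L_0$; the small stabilisation argument for $[M^{inv}:L^{inv}]<\infty$ in (3) is the only other point where finiteness is not immediate.
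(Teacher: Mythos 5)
Your proof follows essentially the same architecture as the paper's: degree bookkeeping for (1), descending to a pair of finitely generated difference subfields for (2), passing to inversive hulls for (3), and combining (1), (3) and Proposition~\ref{proposition: characterisation of measure of difference variety}(3) for (4). Parts (1), (3), (4) are fine; in (3) your stabilisation argument for $[M^{inv}:L^{inv}]<\infty$ is a bit more roundabout than the paper's direct observation that $ML^{inv}$ is itself inversive (hence equal to $M^{inv}$, of degree $\le[M:L]$ over $L^{inv}$), but it is correct and in fact reproduces that observation after applying $\si^{K}$.

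There is, however, a genuine flaw in your part (2): the inequality $[M_0:L_0]\le[M:L]$ does not follow from your construction. You chose $K$ only large enough that $L_0=\si^{-K}(L_0')$ contains the coefficients of the tuple $r$ with $\si(\bar c)=r(\bar c)$, but $L_0$ need not contain the coefficients of the minimal polynomials of the $c_i$ over $L$, and the degree of an algebraic element can \emph{jump up} when the base is shrunk (e.g.\ $L_0=\Qq\subsetneq L=\Qq(\sqrt2)$, $c=\sqrt[4]{2}$ gives $[L(c):L]=2$ but $[L_0(c):L_0]=4$). So you have not justified finiteness of $M_0/L_0$ by the route you cite. The conclusion is nonetheless true: since each $\si^{-k-1}(L_0')/\si^{-k}(L_0')$ has transcendence degree $0$ over $A$, $L/L_0$ is algebraic, hence $\bar c$ is algebraic over $L_0$ and $M_0=L_0(\bar c)$ is finite over $L_0$ for that reason (not by the degree bound). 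Alternatively, do what the paper does: enlarge the chosen generating set of $L$ to also contain the coefficients of the minimal polynomials of the $c_i$ over $L$ (equivalently, take $K$ larger), which then really does give $[M_0:L_0]\le[M:L]$.
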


\begin{proof}
(1) If $L \in DF_A^n$, it is clear that $M \in DF_A^n$. Moreover, $L / \si(L)$ and $M /\si(M)$ are finite field extensions. Then we have 
$[M : \si(M)][\si(M) : \si(L)] = [M : \si(L)] = [M:L][L : \si(L)]$ and $[\si(M) : \si(L)] = [M : L]$ so $[M : \si(M)] = [L : \si(L)]$.
 Since the same   holds for the separable degree, the statement follows.

(2) If $L\in IDF_A^n$ then $[M : L] = [M :\si(L)] = [M : \si(M)][\si(M) : \si(L)]$ and we deduce that $[M  : \si(M)] = 1$, so $M \in IDF_A^n$. 

  Let $S$ be a finite set of generators of $M$ over $L$. For every $a \in S$ there is polynomial $h_a$ over $L$ such that $\si(a) = h(a)$. Let $F_0$ be a finite set of generators of $L$ over $A$ as an inversive difference field which includes the coefficients of each $h_a$ and of the minimal polynomial of each $a\in S$ over $L$. Define  $L_0 = A(F_0)_\si$ and   $M_0 = L_0(S)_\si$. Then $M_0/L_0$ is a finite extension of fields in $DF_A^n$. Moreover $M_0L$ is inversive so  $M = M_0L$ and hence $M  = M_0^{inv}$.

(3) By (2), $ML^{inv} = M^{inv}$ and we can apply (2) to   $(M^{inv})^{tr} / (L^{inv})^{tr}$ in $IDF_{A^{tr}}^n$.

(4)  By  (1), $[L : \si(L)]_{insep} = [M : \si(M)]_{insep}$. By (3) we can find $M'/L' \in DF_{A^{tr}}^n$ with $M,M'$ and $L, L'$ inversion-dual. By (1), $[L': \si(L')] = [M' : \si(M')]$. By Proposition \ref{proposition: characterisation of measure of difference variety}(3), we have $\mu_n(M/A) = \mu_n(L/A)$.
\end{proof}
   
 By \cite{RytenTomasic2006}, we   know that the measures $\mu_n$ from Theorem \ref{theorem: definable measure on qf sets} extend to definable measures on all definable sets of total dimension $n$ and that these measures have the same properties as in Theorem \ref{theorem: definable measure on qf sets}. Note however that this is straightforward to prove directly from Lemma \ref{lemma: quanfitier elimination transf dim 0}, expressing such definable sets as projections, breaking them up into irreducible components, and counting the degrees of the projections. Henceforth, $\mu_n$ will refer to the definable measures on definable sets of dimension $n$.

\subsection{A new characterisation of the definable measure in ACFA}

In this section, we give a new characterisation of the definable measures $\mu_n$. In fact we could follow this approach from scratch to  define the measures $\mu_n$. As we already have the results of \cite{RytenTomasic2006} or the approach sketched in the previous section,   we will only aim to give an alternative characterisation of the $\mu_n$. 

 If $\phi$ is a definable set in  $K \models ACFA$ and $A \leq K$, recall that $cl_A(\phi)$ is the smallest variety over $A$ containing $\phi$.

 \begin{definition}\label{definition: normalised measure}
Let $A$ be an   inversive difference field. Let $\phi$ be a Galois formula over $A$ and suppose   that    $cl_A(\phi)$ is absolutely irreducible.  Let $a$ be a generic point of $cl_A(\phi)$ and let   $L$ be the field extension of   $A(a)_\si^{insep}$ associated to $\phi$. We  view $L$ as a field with no difference structure.
 
  Define $N(\phi)$ to be the number of extensions of $\sigma$ from $A(a)_\si^{insep}$ to an endomorphism  $\tau$   of $L$ such that $\phi(a)$ is satisfied in the difference field $(L, \tau)$. Define $\nu(\phi ) = \frac{N(\phi)}{[L  : A(a)_\si^{insep}]}$.
 \end{definition}

 Fix $\phi$ and a generic point $a$ of $cl_A(\phi)$ as in the above definition. Choose a partition $(c, b)$ of the tuple $a$ such that $L$ is a regular extension of $A(b)_\si^{insep}$. Write $B = (A(b)_\si)^{alg}$. Define $M(\phi(x, b))$ to be the number of extensions of  $\si$ from $B(a)_\si$ to an endomorphism $\tau$ of the composite $BL $ such that $\phi(a)$ is satisfied in the structure $(BL, \tau)$.

 \begin{lemma}\label{lemma: finite extensions necessary for phi is all we need}
Take $\phi$, $a = (c, b)$ and  $L$ as above and suppose that $L$ is a regular extension of $A(b)_\si^{insep}$. Then  $N(\phi ) = M(\phi(x, b))$. 
 \end{lemma}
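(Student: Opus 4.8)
Write $F = A(a)_\si^{insep}$ and $F_b = A(b)_\si^{insep}$, so that $L$ is the splitting field of the (now fixed) family $\mathcal{P}$ over $F$ and $B = (A(b)_\si)^{alg} = (F_b)^{alg}$. The plan is to first convert the regularity hypothesis into a linear disjointness statement and then use it to set up a degree-preserving bijection between extensions of $\sigma$. Since $F_b$ is perfect, $L/F_b$ is automatically separable, so ``$L/F_b$ regular'' is equivalent to ``$L$ is linearly disjoint from $B$ over $F_b$''. Applying the tower property of linear disjointness along $F_b \subseteq F \subseteq L$ then gives that $L$ is linearly disjoint from $BF$ over $F$; since $L/F$ is finite and separable, the multiplication map is a field isomorphism $L \otimes_F BF \xrightarrow{\ \sim\ } BL$, and $L \cap BF = F$. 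I also record that $BF = B \cdot A(a)_\si^{insep}$ is purely inseparable over $B(a)_\si = B \cdot A(a)_\si$, so $\sigma$ has a unique extension from $B(a)_\si$ to $BF$; hence counting extensions of $\sigma$ from $B(a)_\si$ to $BL$ is the same as counting extensions of $\sigma|_{BF}$ to $BL$, and I will compute $M(\phi(x,b))$ using $BF$.

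Next I would show that restriction, $\tilde\tau \mapsto \tilde\tau|_L$, is a bijection from the set of endomorphisms of $BL$ extending $\sigma|_{BF}$ onto the set of endomorphisms of $L$ extending $\sigma|_F$. It is well defined: $F \subseteq BF$ gives $\tilde\tau|_F = \sigma|_F$, and $\tilde\tau(L) \subseteq L$ since $\tilde\tau$ sends the roots of the polynomials in $\mathcal{P}$ to roots of their $\sigma$-transforms, which again lie in $L$ by the invariance built into the definition of a Galois formula. It is injective because $BL = L \cdot BF$, so $\tilde\tau$ is pinned down by $\tilde\tau|_L$ together with $\tilde\tau|_{BF} = \sigma|_{BF}$. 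It is surjective because, given $\tau$ extending $\sigma|_F$, the endomorphism $\tau \otimes_F \sigma|_{BF}$ of $L \otimes_F BF \cong BL$ (well defined since $\tau$ and $\sigma|_{BF}$ agree on $F$) restricts to $\tau$ on $L$ and to $\sigma|_{BF}$ on $BF$.

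Finally I would verify that this bijection respects satisfaction of $\phi(a)$. If $(L,\tau)\models\phi(a)$, witnessed by $y \in L$ with $(a,y) \in V_1$, then $y$ witnesses $\phi(a)$ in $(BL,\tilde\tau)$ as well, since $\tilde\tau$ agrees with $\tau$ on $A(a,y)_\si \subseteq L$. Conversely, if $(BL,\tilde\tau)\models\phi(a)$ with some witness $y'$, then since $V_1 \to V_2$ is a finite cover and $a$ is generic in $cl_A(\phi)$, the pair $(a,y')$ is a generic point of a component of $cl_A(V_1)$ dominating $cl_A(\phi)$; by the defining property of a Galois formula, $A(a,y')_\si$ is then the splitting field of $\mathcal{P}$ over $A(a)_\si$, which is contained in $L$, so $y' \in L$ and $(L,\tau)\models\phi(a)$. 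Thus the bijection restricts to a bijection between the extensions for which $\phi(a)$ holds on the two sides, and $N(\phi) = M(\phi(x,b))$ follows.

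I expect the converse part of this last step to be the delicate point: one must make sure that a witness which a priori lives in the larger field $BL$ can already be found inside $L$, and this is exactly where the finite-cover structure together with the genericity of $a$ is used. The remainder is linear-disjointness bookkeeping, the regularity hypothesis entering only through the isomorphism $BL \cong L \otimes_F BF$, which forces the $\sigma$-extension theory of $L/F$ and of $BL/BF$ to coincide.
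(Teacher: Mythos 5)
Your proof is correct and follows essentially the same route as the paper's: both show that restriction to $L$ gives a bijection between the relevant sets of $\sigma$-extensions, with regularity (equivalently, the linear disjointness of $L$ and $B$ over $A(b)_\si^{insep}$) providing the means to extend an endomorphism of $L$ to one of $BL$. You spell out more carefully the step the paper leaves implicit, namely that a witness to $\phi(a)$ in $(BL,\tilde\tau)$ already lies in $L$ because $(a,y')$ is generic in a component of $V_1$ and the Galois-formula condition then forces $A(a,y')_\si$ into the splitting field $L$; the paper's ``$\phi$ is existential'' remark covers only the easy direction of this.
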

 
 \begin{proof}
As before, let $B = (A(b)_\si)^{alg}$.   Let $\Sigma_1$ and $\Sigma_2$ respectively be the sets of extensions of $\si$ from $A(a)^{insep}_\si$ to $L$   and  from $B(a)_\si$ to $BL$ such that  $\phi(a)$ is satisfied in the corresponding structures. We check that restriction to $L$ induces a bijection $\Sigma_2 \to \Sigma_1$. 

Note that $\si$ extends uniquely from $B(a)_\si$ to $B(a)_\si^{insep}$. Since any $\tau \in \Sigma_2$ must be an endomorphism of $L$, it is clear that restriction induces an injection $\Sigma_2 \to \Sigma_1$. Conversely, for $\tau \in \Sigma_1$,   use the fact that $L$ is a regular extension of $A(b)_\si $ to extend $\si$ to an endomorphism $\tau$ of $BL$ compatible with $\si$ on $B$. Then $\phi(a)$ is true in $(BL, \tau)$  because $\phi$ is an existential formula. 
 \end{proof}
 
 Let $K \models ACFA$ and let $A$ be a small inversive subfield. Let $\phi$ be a Galois formula over $A$ and let $a = (c, b)$ , $B$ and $L$  be as above. Observe that even if $\phi(a)$ is false in $K$, $\phi(x, b)$ is satisfiable. 
  Indeed, using model completeness and the description of types in Lemma \ref{lemma: description of types},  we can construct a difference field containing $B$ and realising $\phi(x, b)$ and we can embed this field over $B$ in  $K$.  

Informally, the next proposition says that with $a$, $B$ and $L$ as above, the extensions of $\si$ from $B(a)_\si$ to $L$ are all equally likely to arise inside a model $K$ of ACFA and this determines the probability that $\phi(a)$ is true, when the measure is normalised by $cl_A(\phi)$.  We write $cl_A(\phi)$ for the smallest difference variety over $A$ containing $\phi$.
   
   \begin{proposition}\label{proposition: alternative characterisation of measure}
Let $K \models ACFA$ and let $A \leq K$ be an inversive subfield.  Let $\phi$ be a Galois formula over $A$  such that $ cl_A(\phi)$ is  absolutely irreducible over $A$. Let $a \in cl_A(\phi)$ be generic over $A$ and let $L$ be the splitting field associated to $\phi$ over $A(a)_\si^{insep}$. Let $a = (c, b)$ be a partition of $a$ and suppose that $L$ is a regular extension of $A(b)_\si^{insep}$. Write $B = (A(b)_\si)^{alg}$ and suppose that $cl_B(\phi(x, b))$ has total dimension $d$.
    Then 
   \[
   \mu_d(\phi(x, b)) = \nu(\phi )\mu_d(cl_B(\phi(x, b))).
   \]
%
\end{proposition}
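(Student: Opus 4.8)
The plan is to reduce the statement to a computation over the algebraically closed inversive field $B=(A(b)_\si)^{alg}$ and then to compute $\mu_d(\phi(x,b))$ by partitioning $X:=cl_B(\phi(x,b))$ according to which extension of $\si$ to the splitting field $BL$ the ambient model of ACFA realises. For the reduction, note first that $b$ lies in a variety of finite total dimension over $A$, so the Claim in the proof of Lemma~\ref{lemma: quanfitier elimination transf dim 0} shows that $B$ is inversive; since $a=(c,b)$ is generic in $cl_A(\phi)$ over $A$, the field $B(c)_\si=B(a)_\si$ is the difference function field of $X$ over $B$, and $X$ is absolutely irreducible over $B$ of total dimension $d$. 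The regularity hypothesis says $L$ is linearly disjoint from $B=(A(b)_\si^{insep})^{alg}$ over $A(b)_\si^{insep}$; since $A(b)_\si^{insep}\le A(a)_\si^{insep}\le L$ and $B\cdot A(a)_\si^{insep}=B(a)_\si^{insep}$ ($B$ being perfect), transitivity of linear disjointness gives $[BL:B(a)_\si^{insep}]=[L:A(a)_\si^{insep}]$. Together with Lemma~\ref{lemma: finite extensions necessary for phi is all we need} this rewrites $\nu(\phi)=M(\phi(x,b))/[BL:B(a)_\si^{insep}]$, so it suffices to show
\[
\mu_d(\phi(x,b))=\frac{M(\phi(x,b))}{[BL:B(a)_\si^{insep}]}\,\mu_d(X).
\]

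\textbf{The stratification.} Put $F=B(a)_\si^{insep}$, a perfect field still carrying $\si$, and let $BL$ be the splitting field of the separable part of $\mathcal P$ over $F$, a finite Galois extension with group $G$, $|G|=[BL:B(a)_\si^{insep}]$. The extensions of $\si|_F$ to endomorphisms of $BL$ form a single coset $C$ of $G$ (nonempty, by invariance of the extension associated to $\phi$), on which $G$ acts by conjugation. Unwinding Definition~\ref{definition: galois formula} one checks that $\phi(a)$ holds in $(BL,\tau)$ exactly for $\tau$ in a conjugation-invariant subset $S\subseteq C$, with $|S|=N(\phi)=M(\phi(x,b))$ by Definition~\ref{definition: normalised measure} and Lemma~\ref{lemma: finite extensions necessary for phi is all we need}. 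For $a'\in X(K)$ generic over $B$ and $y'$ a full root configuration of $\mathcal P$ at $a'$, the ambient $\si_K$ restricts to $B(a',y')_\si$ and hence picks out a $G$-conjugacy class $[\tau_{a'}]\subseteq C$ independent of the choice of $y'$. Using that $\phi$ is existential, together with model completeness, saturation and the description of types in Lemma~\ref{lemma: description of types} to move between abstract extensions of $\si$ to $BL$ and their realisations inside $K$, one shows that, off a definable set of total dimension $<d$, $\phi(x,b)=\bigsqcup_{[\tau']\subseteq S}X_{[\tau']}$, where $X_{[\tau']}:=\{a'\in X:[\tau_{a'}]=[\tau']\}$, and that $X_{[\tau']}$ is precisely the image of the projection $P_{\tau'}\to X$, $P_{\tau'}$ being the difference variety over $B$ with difference function field $(BL,\tau')$ (throughout, one really works with the splitting field of $\mathcal P$ over $B(a)_\si$ itself, which is finite over $B(a)_\si$ with separable degree $[BL:B(a)_\si^{insep}]$, in order to deal with inseparability).

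\textbf{The measure count.} Fix a conjugacy class $[\tau']\subseteq C$. Then $P_{\tau'}\in DF_B^d$ is a finite field extension of the difference function field of $X$, so $\mu_d(P_{\tau'})=\mu_d(X)$ by Lemma~\ref{lemma: explicit definable measure on varieties}(\ref{equation: explicit definable measure on varieties}). The projection $P_{\tau'}\to X_{[\tau']}$ is surjective with finite fibres, and over a generic point of $X$ the fibre consists of the root configurations of $\mathcal P$ compatible with the difference structure $\tau'$, which form an orbit of the centraliser $C_G(\tau')$, of cardinality $|C_G(\tau')|=|G|/|[\tau']|$. Fubini for the system $(\mu_n)$ (Theorem~\ref{theorem: definable measure on qf sets}(3), extended to all definable sets as recalled after Lemma~\ref{lemma: explicit definable measure on varieties}) then gives $\mu_d(X_{[\tau']})=\mu_d(P_{\tau'})/|C_G(\tau')|=\frac{|[\tau']|}{|G|}\mu_d(X)$. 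Summing over the conjugacy classes contained in $S$,
\[
\mu_d(\phi(x,b))=\frac{|S|}{|G|}\,\mu_d(X)=\frac{M(\phi(x,b))}{[BL:B(a)_\si^{insep}]}\,\mu_d(X),
\]
which by the first step equals $\nu(\phi)\,\mu_d(cl_B(\phi(x,b)))$, as desired.

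\textbf{Main obstacle.} The crux is the claim in the stratification step that, off a set of total dimension $<d$, $\phi(x,b)$ is the disjoint union of the good strata $X_{[\tau']}$, $[\tau']\subseteq S$: one must argue that for a measure-generic $a'\in X$ the difference structure $K$ forces on the splitting field of $\mathcal P$ at $a'$ ranges, up to $G$-conjugacy, over the whole coset $C$, that each conjugacy class is realised on a definable subset of $X$, and that $a'\in\phi(x,b)$ if and only if that structure lies in $S$ --- this is precisely where model completeness, saturation and the independence/amalgamation properties of ACFA enter. The remaining ingredients (the degree identity coming from regularity, the orbit count for the generic fibre of $P_{\tau'}\to X$, and the inseparability bookkeeping) are routine.
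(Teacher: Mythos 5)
Your proof is correct and takes essentially the same route as the paper: both reduce to $\nu'(\phi(x,b))=M(\phi(x,b))/[BL:B(a)_\si^{insep}]$ via Lemma~\ref{lemma: finite extensions necessary for phi is all we need}, both identify the degree of the projection onto (a component of) $X$ with the order of the centraliser of the associated extension $\tau$ inside $G=\mathrm{Gal}(BL/B(a)_\si^{insep})$, and both combine this with $\mu_d(P_{\tau'})=\mu_d(X)$ (Lemma~\ref{lemma: explicit definable measure on varieties}) and Fubini to get $\frac{|S|}{|G|}\mu_d(X)$. The only cosmetic difference is that you sum explicitly over conjugacy classes in $S$ whereas the paper reduces to a single irreducible component of $Y(b)$ (equivalently, a single conjugacy class) and lets the aggregation be implicit; the ``main obstacle'' you flag is handled in the paper exactly by this component-by-component decomposition.
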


\begin{proof}
This proof is similar to the proof of Proposition 11.1 in \cite{Hrushovski2002}.
Throughout the proof we only consider the definable measure in dimension $d$ so we write $\mu = \mu_d$.   Write $B =  (A(b)_\si)^{alg}$.   By Lemma \ref{lemma: finite extensions necessary for phi is all we need}, it is enough to show that
\[
\mu(\phi(x, b)) = \nu'(\phi(x, b)) \mu(cl_B(\phi(x, b)))
\]
 where 
 \[
 \nu'(\phi(x, b)) = \frac{M(\phi(x,b))}{[BL : B(a)_\si^{insep}]} = \frac{M(\phi(x ,b))}{[L : A(a)_\si^{insep}]}.
 \]

Suppose that $\phi(x, y)$ is the projection of varieties  $\pi: Y  \to X$. 
  Write $X(b), Y(b)$ respectively for the subvarieties of $X,Y$  consisting of points which project onto $b$ by restricting to the appropriate coordinates. Therefore, $\phi(x, b)$ is the image of the projection $\pi : Y(b) \to X(b)$. We can assume that $X(b) = cl_B(\phi(x, b))$. In order to calculate $M(\phi(x, b))$, we can consider each irreducible component of $Y(b)$ over $B$ separately, since these correspond to different extensions of $\si$ from $B(a)_\si$ to $BL$. Therefore we assume that $Y(b)$ is irreducible over $B$.  Then the projection $\pi : Y(b) \to X(b)$ has constant degree $\eta$. 

 Let $\Sigma$ be the set of extensions $\tau $ of $\si$ from $B(a)_\si$ to $BL$ such that $\phi(a)$ is true in $(BL, \tau)$. Write $G = Gal(BL / B(a)_\si^{insep})$. Note  that $G$ acts  on $\Sigma$ by both left and right translation and hence by conjugation. Fix an element $\tau_0$ of $\Sigma$ and let $H = C_G(\tau_0) = \{g \in G \mid \tau_0 g = g \tau_0\}$.



\begin{claim}
$\eta = |H|$.
\end{claim}

\begin{proof}[Proof of claim]
We work in $(BL, \tau_0)$ and we show that $H$ acts sharply transitively on the pullback of $a$ to $Y(b)$. Let $y_1, y_2 \in \pi^{-1}(a)$. 
 Since $Y(b)$ is irreducible over $B$,  there is  $g \in G$ such that $g(y_1) = y_2$. Since $BL$ is Galois over $B(a)_\si^{insep}$ and $y_1, y_2$ both generate $BL$ over $B(a)_\si^{insep}$, $g$ is unique. 

We check that $g \in H$. Since $L$ is $\tau_0$-invariant, we have $\tau_0(y_1) = P(y_1)$ for some   polynomial map $G$ over $A(a)_\si^{insep}$. Since  $Y(b)$ is irreducible, we also have $\tau_0(y_2) = P(y_2)$. Now we conclude $\tau_0(g(y_1)) = P(y_2) = g(P(y_1)) = g(\tau_0(y_1))$ so $g \in H$. 
\end{proof}

\begin{claim}
$|\Sigma| = [G : H]$. 
\end{claim}

 \begin{proof}[Proof of claim]
If $\tau_1, \tau_2 \in \Sigma$,   by irreducibility of $Y(b)$ we have $(BL, \tau_1) \cong_{B(a)_\si} (BL, \tau_2)$. Hence there is $g \in G$ with $g\tau_1 = \tau_2 g$. Therefore $\tau_1$ and $\tau_2$ are $G$-conjugate and the number of such conjugates is $[G : H]$.
 \end{proof}
 
\noindent By the first claim and by Fubini, we have 
\[
\mu(\phi(x,b)) = \frac{\mu(Y( b ))}{|H|}.
\] 
By irreducibility of $Y(b)$ and $X(b)$ and   by Lemma \ref{lemma: explicit definable measure on varieties}(\ref{equation: explicit definable measure on varieties}) we also have $\mu(Y(b)) = \mu(X(b))$. By the second claim, we deduce 
 \[
 \mu(\phi(x, b))  = \frac{1}{|H|}   \mu(X(b)) = \frac{|\Sigma|}{|G|}\mu(X(b)) = \nu'(\phi(x, b))\mu(X(b))
 \]
\end{proof}
  
In Proposition \ref{proposition: alternative characterisation of measure}, the assumption that $L$ is a regular extension of $A(b)_\si^{insep}$ is essential. We will usually need to change the domain of definition of $\phi$  in order to put ourselves in the situation where $L$ is regular over $A(b)_\si^{insep}$. Since $L$ is regular over a finite  extension of $A(b)_\si^{insep}$, this can be achieved by changing the domain of $\phi$ and lifting the whole definable set to a cover, so that $\phi$ ranges over tuples of the form $(x, y, y')$ and $y'$ codes Galois information.

Moreover, we will be interested in applying Proposition \ref{proposition: alternative characterisation of measure} to many different partitions of the tuple $a$ at the same time. This motivates the notion of systems of difference fields and difference varieties in the next section.

\section{Regular systems and definable edge-uniformity}

In this section, we often work inside an arbitrary algebraically closed inversive difference field $K$. We introduce systems of difference fields and of varieties, and we isolate the fundamental notion of a regular system. This is a purely algebraic or geometric notion, but we will see that it corresponds to a certain form of combinatorial regularity in the sense of Szemeredi when $K$ is a model of ACFA.

 \subsection{Regular systems of perfect difference fields}\label{subsection: regular systems of difference fields}
 
In the following definition, we record some notation  which we will use often:
\begin{definition}
 $V$ usually denotes a finite set. $P(V)$ is the powerset of $V$ and $P(V)^- = P(V) \setminus\{ V\}$. For $k \leq |V|$, $P_k(V) = \{u \in P(V) \mid |u| \leq k\}$. For $u \subseteq V$ and $0 \leq k \leq |u|$, $P(V, u, k) = \{v \subseteq V \mid |v \cap u| \leq k\}$.  
 
 If $I$ is a collection of subsets of $V$ closed under taking subsets, we say that $I$ is downward closed. If in addition $\bigcup I  = V$, we say that $I$ is a  simplicial complex on $V$. 
 
 If $I$ is downward closed and $u \subseteq V$, define $I_u = \{v \cap u \mid v \in I\}$.  If $I$ is a collection of subsets of $V$, write $\partial I = \{v\mid \exists u \in I, v \subseteq u\}$.

\end{definition}

 We fix an algebraically closed inversive difference field $K$. $K$ is not usually assumed to be a model of ACFA and we will be careful to indicate when we need this assumption.

  In this paper, we write composites of fields as products: if $(K_i :  i\in I)$ is a collection of fields contained in a   field $K$, $\prod_{i \in I} K_i$ denotes the composite of this family. We never write down Cartesian products of fields so this notation is not ambiguous.

If  $A, B, C \subseteq K$, we say that $A$ is independent from  $B$ over $C$ if $((AC)^{inv})^{alg}$ is linearly disjoint from $((BC)^{inv})^{alg}$ over $(C^{inv})^{alg}$ and we write $A \ind_C B$. When $(B_i)_I$ is a family of subsets of $K$ and $A \subseteq K$, we say that the family $(B_i)_I$ is independent over $A$ if for all disjoint $I_1, I_2 \subseteq I$, $\prod_{i \in I_1}AB_i$ is independent from $\prod_{i \in I_2}AB_i$ over $A$.

When $K \models ACFA$, this notion of independence coincides with model theoretic independence. It is useful to recall  from \cite{ChatzidakisHrushovski1999} that ACFA has existence of amalgamation for all orders. This is the Generalised Independence Theorem. We will not use this theorem directly because we will work at the more detailed level of systems of difference fields. 

\begin{definition}\label{definition: system of difference fields}
 Let $V$ be a finite set and let $A \leq K$ be a perfect inversive difference field.   
Let $\S$ be a functor from $P(V)$ to finitely generated perfect   difference field extensions of $A$ of finite total dimension contained in $K$,  where the arrows between elements of $P(V)$ and between the difference field extensions of $A$ are inclusions. 

We say that $\S$ is a system of difference fields on $V$ over $A$ if $\S$ satisfies the following conditions:

 \begin{enumerate}
\item $\mathcal{S}(\emptyset) = A$
\item The family $\{\S\{i\} \mid i \in V\}$ is independent over $A$.
\item For $u \subseteq V$ with $|u| \geq 2$, $\S(u)$ is a finite invariant Galois extension of   $\prod_{ v \in P(u)^-}\S(v)$.
\end{enumerate}

Let $I$ be a simplicial complex on $V$. We say that $\S$ is an $I$-system if for all $u \notin I$, $\S(u) = \prod_{v \in P(u)^-} \S(v)$.
 \end{definition}
 
If $\S$ is an $I$-system on $V$ over $A$, we will often want to restrict $\S$ to smaller simplicial complexes. Let $J \subseteq I$ be downward closed, but not necessarily a simplicial complex. Then we write $\S\upharpoonright J$ for the $J$-system on $\bigcup J$ over $A$ defined by $(\S\upharpoonright J)(u) = \prod_{J_u} \S(v)$ for any $u \subseteq \bigcup J$.  

\begin{definition}\label{definition: refinement of system of difference fields}
Let $A \leq K$ be a perfect inversive difference field and let $\S$ be a system of difference fields on $P(V)$ over $A$ in $K$. Let $K'$ be an algebraically closed inversive difference field containing $K$, let $A'$ be a perfect inversive difference field containing $A$ and let $\S'$ be a system of difference fields on $P(V)$ over $A'$ in $K'$. 

Let $I$ be a downward-closed collection of subsets of $V$. We say that $\S'$ is an $I$-refinement of $\S$ over $A$ if
\begin{enumerate}
\item $A'$ is a finitely generated   extension of $A$
\item for every $u \in I$, $\S'(u)$ is a finite invariant Galois extension of $\S(u)$
\item  for every $u \notin I$,  $\S'(u) = \S(u)\prod_{v \in I_u} \S'(v)$.
\end{enumerate}
%
\end{definition}

\noindent \textbf{Remarks:} (1) We usually move freely between $K$ and $K'$ when taking refinements, even if we do not explicitly reference the fields. In fact, we often omit the ambient field $K$, taking it to be fixed in our background assumptions.

(2) Let $I$ be a simplicial complex on $V$ and $J \subseteq I$ a downward closed subset. Let $\S$ be an $I$-system on $V$ over $A'$ and let $\S'$ be a $J$-refinement of $\S$ over $A$. Then $\S'$ is an $I$-system on $P(V)$ over $A'$. 

\medskip

 The next definition is fundamental for this paper. 
 
\begin{definition}\label{definition: protected system of difference fields}
Let $\S$ be a system of perfect difference fields on $V$ over $A$. We say that $\S$ is regular if   for every nonempty $u \subseteq V$, $\S(u)$ is linearly disjoint from $\prod_{v \in P(u)^-}\S(v)^{alg}$ over $\prod_{v \in P(u)^-}\S(v)$.
 
\end{definition}

 \noindent \textbf{Remark:}
The notion of a $P(V)^-$-system $\S$ of difference fields over $A$  is closely related to the notion of an amalgamation functor over $A$. See \cite{Hrushovski2006} for a general discussion of amalgamation functors. $\S$ is not exactly the solution of an amalgamation functor on $P(V)^-$ because the fields $\S(v)$ are not algebraically closed, but it is the result of truncating the solution $(\S(v)^{alg})_{v \in P(V)^-}$ down to various subfields.

\medskip 
 
 We will show that systems of difference fields can always be refined to regular systems. We use the following technical lemma:

\begin{lemma}\label{lemma: effective algebraic closure in systems}
Let $\S$ be a   system of   difference fields on $V$ over an  algebraically closed difference field $A$ contained in $K$. Suppose $A$ is existentially closed in   $K$. Let $I$ be a   family of subsets of $V$ and let $u \subseteq V$. Then $\S(V)^e\prod_{v\in I} \S(v)^{alg}$ is linearly disjoint from $\S(u)^{alg}$ over $\S(u)^e \prod_{v \in I_u}\S(v)^{alg}$. 
\end{lemma}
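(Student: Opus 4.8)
The plan is first to reduce the assertion to an inclusion of fields, which becomes tractable because one of the two fields is Galois over the base, and then to run an induction on $|V\setminus u|$, peeling off vertices outside $u$. Write $P=\S(V)^e\prod_{v\in I}\S(v)^{alg}$ and $E=\S(u)^e\prod_{v\in I_u}\S(v)^{alg}$. One first checks $E\subseteq P$: for $v\in I_u$ we have $v=w\cap u$ for some $w\in I$, so $\S(v)^{alg}\subseteq\S(w)^{alg}\subseteq P$ by functoriality; and $\S(u)^e\subseteq\S(V)^e$, since any invariant finite Galois extension $L$ of $\S(u)$ carries an extension of $\si$ agreeing with $\si_{\S(V)}$ on $L\cap\S(V)$ — the restriction map on $\si$-extensions from $L$ to $L\cap\S(V)$ being surjective — so that $L\,\S(V)/\S(V)$ is again a finite invariant Galois extension. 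As $\S(u)$ is perfect, so are $\S(u)^e$, the fields $\S(v)^{alg}$, and hence $E$; thus $\S(u)^{alg}/E$ is a (possibly infinite) Galois extension, and for such an extension linear disjointness from $P$ over $E$ is equivalent to
\[
\S(u)^{alg}\cap P\subseteq E,
\]
the reverse inclusion being clear. So it suffices to prove this inclusion.

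We argue by induction on $|V\setminus u|$. If $u=V$ then $E=P$ and the inclusion is trivial. If $u\subsetneq V$, fix $i\in V\setminus u$ and put $V'=V\setminus\{i\}$; then $\S\upharpoonright P(V')$ is again a system of difference fields on $V'$ over $A$ (conditions (1)--(3) of Definition \ref{definition: system of difference fields} are inherited), and the induction hypothesis applies to it, with the family $\{v\cap V'\mid v\in I\}$ and the same set $u$. Passing from the $V'$-data to the $V$-data involves the two defining features of a system. The new $\prod\S(v)^{alg}$-factors — those indexed by sets meeting $\{i\}$ — are controlled, via condition (2) of Definition \ref{definition: system of difference fields}, by the single field $\S\{i\}^{alg}$, which is linearly disjoint over $A$ from everything built over $V'$; so adjoining them does not enlarge the intersection with $\S(u)^{alg}$. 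The essential step is instead the passage from $\S(V')^e$ to $\S(V)^e$: here the structural fact to exploit is that $\S(V)$ is a finite invariant Galois extension of $D:=\prod_{w\in P(V)^-}\S(w)$, so $\S(V)\subseteq D^e$ while also $\S(V')^e\subseteq D^e$, and one is reduced to bounding, for an invariant finite Galois extension $L$ of $\S(V)$, the intersection $L\cap\S(u)^{alg}$. If this exceeded what the induction hypothesis permits, one would realise the offending configuration in a difference field, embed it over $A$ into $K$ — possible precisely because $A$ is existentially closed in $K$ — and then, via Lemma \ref{lemma: description of types} and a count of the extensions of $\si$ involved, contradict the induction hypothesis applied to $\S\upharpoonright P(V')$ (or to the subsystem obtained by truncating $\S$ to the complex $P(V)^-$, treated by an inner induction on that complex).

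The main obstacle is exactly this last step. The effective algebraic closure behaves much less well under composita and base change than the full algebraic closure — which is precisely why the lemma must speak of $\S(V)^e$ and not $\S(V)^{alg}$ — and the delicate point is to keep track of which finite Galois extensions of a composite of difference fields remain $\si$-invariant. The hypothesis that $A$ is existentially closed in $K$ is what provides enough room to realise the relevant $\si$-extensions and conjugates inside $K$ and so to bring the description of types to bear; without it one cannot rule out that an invariant finite Galois extension of $\S(V)$ meets $\S(u)^{alg}$ outside $E$.
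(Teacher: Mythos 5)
Your reduction to the inclusion $\S(u)^{alg}\cap P\subseteq E$ is correct and worth making, but the argument then stalls at exactly the place you flag: the passage from $\S(V')^e$ to $\S(V)^e$. You gesture at realising an ``offending configuration,'' embedding it in $K$ via existential closure of $A$, and ``contradicting the induction hypothesis,'' but you never say how the contradiction would be produced, and you concede as much in your final paragraph. A secondary worry: your claim that the factors $\S(v)^{alg}$ with $i\in v$ are ``controlled by the single field $\S\{i\}^{alg}$'' via condition (2) of Definition \ref{definition: system of difference fields} is too quick. For $|v|\geq 2$ the field $\S(v)$ is a finite Galois invariant extension of $\prod_{w\in P(v)^-}\S(w)$, so $\S(v)^{alg}$ is the algebraic closure of that compositum, and a compositum of algebraically closed fields need not itself be algebraically closed; condition (2) only governs independence of the singleton fields over $A$, and does not by itself hand you the linear disjointness you are invoking.

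The paper's proof uses no induction on $|V\setminus u|$. It first reduces to the case that $A$ is $\omega_1$-saturated (by passing to an $\omega_1$-saturated elementary extension of the pair $(K,A)$ and then internalising parameters at the end), fixes $d$ in the intersection, and localises all relevant data over a small finitely generated algebraically closed difference subfield $B\leq A$ so that $d\in B(a_V)_\si^e\prod_I B(a_v)_\si^{alg}\cap B(a_u)_\si^{alg}$. The role of existential closure is then the opposite of what you anticipate: $\omega_1$-saturation together with existential closure of $A$ in $K$ lets one find a tuple $a'_{V\setminus u}$ \emph{inside $A$} with $B(a'_{V\setminus u})_\si\cong_{B} B(a_{V\setminus u})_\si$. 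Because it sits inside $A$, that tuple is automatically linearly disjoint from $B(a_u)_\si$ over $B$, so $B(a_V)_\si$ and $B(a_u,a'_{V\setminus u})_\si$ are isomorphic as difference fields over $B(a_u)_\si$. Since the effective algebraic closure, viewed as a pure field, depends only on the difference-field isomorphism type, that isomorphism extends to pure-field isomorphisms on the $(\cdot)^{alg}$ and $(\cdot)^{e}$ levels over $B(a_u)_\si^{alg}$ and $B(a_u)_\si^{e}$; in particular it fixes $d$, and carries $d$ into a field whose ``outside $u$'' content is entirely absorbed into $A$. That forces $d\in\S(u)^e\prod_{I_u}\S(v)^{alg}$. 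This single descent step replaces your unfinished induction and dissolves the obstacle you identify; the idea you were missing is that existential closure is used to produce a copy of the outside-$u$ data already lying in the base, not to embed a putative counterexample into $K$.
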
 

\begin{proof}
%
%
%
%

 We first prove the lemma under the assumption that the base $A$ is   $\omega_1$-saturated. We will remove this assumption at the end of the proof.

We can  assume that $\S$ is an $\{\{i\} \mid i \in V\}$-system. For every $i \in V$, let $a_i$ be a finite tuple such that $\S(i) = A(a_i)_\si^{insep}$ and write $a_v = (a_i)_{i \in v}$ for any $v \subseteq V$. 
Take $d \in \S(V)^e \prod_I \S(v)^{alg} \cap \S(u)^{alg}$. There is a finitely generated   algebraically closed  difference field $B$ contained in $A$ such that $B(a_u)_\si$ is linearly disjoint from $A$ over $B$ and such that $d \in B(a_V)_\si^e \prod_I B(a_v)_\si^{alg} \cap B(a_u)_\si^{alg}$. 

By $\omega_1$-saturation and existential closure of $A$, we can find $a'_{V \setminus u}  \in A$ such that the difference fields $B(a_{V\setminus u})_\si$ and $B(a'_{V \setminus u})_\si$ are isomorphic over $B$. Since $B(a'_{V \setminus u)}) \subseteq A$,  $B(a'_{V \setminus u})_\si$ is linearly disjoint from $B(a_{u})_\si$ over $B$. Hence the difference fields $B(a_V)_\si$ and $B(a_u, a'_{V\setminus u})_\si$ are isomorphic over $B(a_u)_\si$. 

As a pure field, $B(a_V)_\si^e$ depends only on the difference field isomorphism type of $B(a_V)_\si$. Hence the isomorphism $B(a_V)_\si \to  B(a_u, a'_{V\setminus u})_\si$ extends to an isomorphism of pure fields $ B(a_V)_\si^{alg} \to  B(a_u, a'_{V\setminus u})_\si^{alg}$   over $B(a_u)_\si^{alg}$ which restricts to  an isomorphism $ B(a_V)_\si^{e} \to  B(a_u, a'_{V\setminus u})_\si^{e}$ over $B(a_u)_\si^e$.  Since $d \in B(a_u)_\si^{alg}$, the isomorphism constructed above fixes $d$ and we  deduce that  $d \in B(a_u, a'_{V \setminus u})_\si^e \prod_I B(a_{v \cap u}, a'_{v \setminus u})_\si^{alg}$. Since $a'_{V \setminus u} \subseteq A$, we have $d \in\S(u)^e \prod_{I_u} \S(v)^{alg}$ as desired.

Now suppose that $A$ is not $\omega_1$-saturated. Consider the structure $K$ with an additional predicate for $A$. Let $K^*$ be an $\omega_1$-saturated elementary extension and let $A^*$ be the $\omega_1$-saturated algebraically closed difference field extending $A$. Then the independence properties of the tuples $a_V$ are preserved by moving to $K^*$ and $A^*$ is also existentially closed in $K^*$, so we can apply the lemma to the system $\S^*$ defined by lifting the base to $A^*$. 

Take $ d \in \S(V)^e \prod_I \S(v)^{alg} \cap \S(u)^{alg}$. Then $d \in K \cap \S^*(u)^e \prod_{I_u} \S^*(v)^{alg}$. By elementarity, we internalise all  parameters into the base $A$ and we recover the desired result. 
\end{proof}

 The next proposition   will provide the regular partitions in our hypergraph regularity results.  
 
 \begin{proposition}\label{proposition: existence of regular refinements}
  Let $\S$ be an $I$-system of   difference fields on $V$ over $A$. Then $\S$ has a $\partial I$-refinement $\S'$ over some finitely generated   extension $A'$ of $A$ which is regular.  
 \end{proposition}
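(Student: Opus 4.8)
\emph{Proof plan.} The plan is to induct on $k:=\max\{|u|\mid u\in I\}$, making the system regular at the faces of size $k$ while preserving regularity at smaller faces, after first reducing to the situation where Lemma~\ref{lemma: effective algebraic closure in systems} applies, i.e.\ where the base is algebraically closed and existentially closed in the ambient field.

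For that reduction I would pass to a larger model of ACFA, pick a small $\hat A\models ACFA$ containing $A$ and independent over $A$ from $\prod_{u\subseteq V}\S(u)$, and replace $\S$ by the $I$-system $\hat\S(u):=\S(u)\hat A$ over $\hat A$ (the system axioms and the $I$-system condition survive because $\hat A$ is taken independent of $\S$). Having built a regular $\partial I$-refinement of $\hat\S$ over a finitely generated extension of $\hat A$, I would descend it: all the data of the refinement involves finitely many parameters, so there is a finitely generated $A'\geq A$ carrying a refinement $\S'$ of $\S$ with $\hat\S'(u)=\S'(u)\hat A$, and by putting enough parameters into $A'$ I can arrange that $\hat A$ is linearly disjoint from every $\S'(u)$ over $A'$, so that linear disjointness --- hence regularity --- of the top fields descends from $\hat A$ to $A'$. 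From this point on $A$ is algebraically closed and existentially closed in $K$; in particular regularity at faces of size $\leq 1$ is automatic and the base case $k\leq 1$ is trivial.

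For the inductive step ($k\geq 2$), let $U$ be the set of faces of $I$ of size $k$ and $I^{(k-1)}=I\cap P_{k-1}(V)$. For $u\in U$ write $F_u=\prod_{v\in P(u)^-}\S(v)$, $G_u=\prod_{v\in P(u)^-}\S(v)^{alg}$, and let $D_u=\S(u)\cap G_u$ be the \emph{algebraic overhead} at $u$; since $\S(u)/F_u$ is finite invariant Galois and $G_u/F_u$ normal, $D_u/F_u$ is finite Galois, and $D_u$ is a $\sigma$-invariant subfield of $\S(u)$, so $D_u\subseteq F_u^e$. The crucial point I would establish is the \emph{effectivity} of the overhead,
\[
D_u\ \subseteq\ \prod_{v\in P(u)^-}\S(v)^e ,
\]
by applying Lemma~\ref{lemma: effective algebraic closure in systems} to the system $\S\upharpoonright P(u)$ and peeling the maximal proper subsets of $u$ off one at a time, replacing $\S(v)^{alg}$ by $\S(v)^e$ in the relevant composites (using $\S(v)^e\subseteq\S(u)^e$ and $\S(w)^{alg}\subseteq\S(v)^{alg}$ for $w\subseteq v$). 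Granting this, since each $D_u$ is finite over $F_u$ I can choose, for every $v\in I^{(k-1)}$, a finite invariant Galois extension $N_v$ of $\S(v)$ inside $\S(v)^e$ large enough that $\prod_{v\in P(u)^-}N_v\supseteq D_u$ for all $u\in U$ at once; the $N_v$ give an $I^{(k-1)}$-refinement $\S_1$ of $\S\upharpoonright I^{(k-1)}$. I would then apply the inductive hypothesis to $\S_1$ (maximal face size $<k$) to obtain a regular $I^{(k-1)}$-refinement $\S_2$ over a finitely generated $A'$, and extend $\S_2$ to an $I$-system by $\S_2(u):=\S(u)\prod_{v\in P(u)^-}\S_2(v)$ for $u\in U$. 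Since the lower composites only grew, $\prod_{v\in P(u)^-}\S_2(v)\supseteq D_u$; and regularity of $\S_2$ holds at faces in $I^{(k-1)}$ by induction, at faces outside $I$ trivially, and at $u\in U$ because the linear disjointness of $\S(u)$ from $G_u$ over $D_u$ (valid as $\S(u)/D_u$ is Galois and $\S(u)\cap G_u=D_u$) base-changes to linear disjointness of $\S_2(u)=\S(u)\prod_{v\in P(u)^-}\S_2(v)$ from $G_u=\prod_{v\in P(u)^-}\S_2(v)^{alg}$ over $\prod_{v\in P(u)^-}\S_2(v)$.

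The main obstacle, I expect, is the effectivity statement $D_u\subseteq\prod_{v\in P(u)^-}\S(v)^e$: this is exactly the phenomenon that Lemma~\ref{lemma: effective algebraic closure in systems} is designed to capture, but squeezing it out requires careful bookkeeping in the iterated peeling of faces, and it is the precise point where being over an algebraically closed, existentially closed base is indispensable --- and also the reason why replacing effective closures by full algebraic closures (to get a genuinely definable, ``classical'' refinement) is delicate.
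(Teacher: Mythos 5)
Your proposal is correct in outline and rests on the same two technical pillars as the paper's proof — the ``effectivity'' of the algebraic overhead (your $D_u\subseteq\prod_{v\in P(u)^-}\S(v)^e$, which is exactly the Claim the paper establishes via Lemma~\ref{lemma: effective algebraic closure in systems}) and the tower/base-change argument showing that linear disjointness of $\S(u)$ from $G_u$ over $D_u$ survives composing with any intermediate field $E$ with $D_u\subseteq E\subseteq G_u$ — but you organise the induction genuinely differently. The paper enumerates $I$ from larger faces to smaller and processes one face $u_k$ at a time, refining its boundary to absorb the overhead and then checking (by the same base-change argument) that regularity at the already-handled larger faces is preserved; your version instead inducts on $k=\max\{|u|:u\in I\}$, absorbing the overheads of \emph{all} size-$k$ faces at once into a single $I^{(k-1)}$-refinement $\S_1$, invoking the proposition inductively on $\S_1$ to regularise the boundary, and then re-attaching the top faces by $\S_2(u)=\S(u)\prod_{P(u)^-}\S_2(v)$. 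Your batch-then-recurse structure is arguably cleaner because it avoids the paper's bookkeeping of ``regularity at $u_n$ for $n<k$ is preserved'' at each step (that issue is pushed into the inductive hypothesis); the price is the extra check you correctly supply, that after the inductive refinement the composites $\prod_{P(u)^-}\S_2(v)$ still dominate every $D_u$. Two small remarks: (i) your effectivity step is exactly the paper's Claim, and it does require the careful peeling you anticipate (an induction over downward-closed $J$ using Lemma~\ref{lemma: effective algebraic closure in systems}), so you are right to flag it as the crux; (ii) on the initial reduction, your demand that $\hat A$ be independent from $\prod_u\S(u)$ over $A$ and your explicit descent of linear disjointness back to a finitely generated $A'$ are more careful than the paper's one-line remark — to make the descent airtight you should strengthen ``$\hat A$ linearly disjoint from every $\S'(u)$ over $A'$'' to ``$\hat A$ independent from $\prod_u\S'(u)^{alg}$ over $A'$,'' which is automatic from your initial choice of $\hat A$ since the $\S'(u)$ are algebraic over $\prod_u\S(u)$.
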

 
 \begin{proof}
Moving to field extensions if necessary, we can assume that $A$ is algebraically closed, inversive, and existentially closed in $K$. The field $A'$ in the statement is then recovered by choosing finitely many parameters in $A$ over which $\S'$ is defined. We use the following claim:

\begin{claim}
$\S(V)^e \cap \prod_{v \in P(V)^-} \S(v)^{alg} = \prod_{v \in P(V)^-} \S(v)^e$. 
\end{claim}

\begin{proof}[Proof of claim]
We show that for every downward-closed  collection $J$ of subsets of $V$, we have $\S(V)^e \cap \prod_J \S(v)^{alg} = \prod_J \S(v)^e$. We proceed by induction on $|J|$. The case $|J| = 1$ is an instance of Lemma \ref{lemma: effective algebraic closure in systems}. 

Let $J = J' \cup \{w\}$ where $P(w)^- \subseteq J'$ and suppose the claim holds for $J'$. Then we know that $\S(V)^e$ is linearly disjoint from $\S(w)^e\prod_{J'} \S(v)^{alg}$ over $\prod_J \S(v)^e$. By the Towers property,  it is enough to show that $\S(V)^e\prod_{J'} \S(v)^{alg} $ is linearly disjoint from $\prod_J \S(v)^{alg}$ over $\S(w)^e \prod_{J'}\S(v)^{alg}$.

By Lemma \ref{lemma: effective algebraic closure in systems}, $\S(V)^e \prod_{J'} \S(v)^{alg}$ is linearly disjoint from $\S(w)^{alg}$ over $\S(w)^e \prod_{P(w)^-} \S(v)^{alg}$. By the Towers property, $\S(V)^e\prod_{J'} \S(v)^{alg} $ is linearly disjoint from $\S(w)^{alg} \prod_{J'} \S(v)^{alg} = \prod_J \S(v)^{alg}$ over $\S(w)^e \prod_{J'} \S(v)^{alg}$, as desired. 
\end{proof}
 
  Now let $(u_n)$ be an enumeration of $I$ such that for all $n$, $u_n$ is a maximal element of $\{u_m \mid m \geq n\}$. Fix $k \geq 0$ and assume we have constructed a $\partial I$-refinement  $\S'$ of $\S$ such that for all $n < k$, $\S'(u_n) \cap \prod_{P(u_n)^-} \S(v)^{alg} = \prod_{ P(u_n)^-} \S'(v)$. 
  
If $\S'(u_k) \cap \prod_{P(u_k)^-} \S(v)^{alg} \neq \prod_{P(u_k)^-} \S'(v)$, then by applying  the claim to $\S' \upharpoonright P(u_k)$, we construct finite extensions $\S''(v) \subseteq \S'(v)^e$ for every $v \in P(u_k)^-$ such that 
\[
\S'(u_k) \cap \prod_{P(u_k)^-} \S(v)^{alg} \subseteq \prod_{P(u_k)^-} \S''(v).
\]
By making coherent choices of field extensions, we can assume that the extensions $\S''(v)$ define a $P(u_k)^-$-refinement of $\S'$.

  $\S''(u_k)$ is clearly  linearly disjoint from $\prod_{P(u_k)^-} \S(v)^{alg}$ over $\prod_{P(u_k)^-} \S''(v)$.  For $n < k$, $\S'' \upharpoonright P(u_n)$ is a $P(u_n)^-$-refinement of $\S' \upharpoonright P(u_n)$, so $\S''(u)$ is also linearly disjoint from $\prod_{P(u_n)^-} \S(v)^{alg}$ over $\prod_{P(u_n)^-} \S''(v)$.  The proposition follows inductively. 
 \end{proof}

 We will now prove some properties of regular systems which will be useful in the rest of this paper. The key   technical fact behind these results is Lemma \ref{lemma: linear disjointness in systems}. Its proof  is close to the argument which underpins the Generalised Independence Theorem in ACFA (see \cite{ChatzidakisHrushovski1999}).

\begin{lemma}\label{lemma: linear disjointness in systems}
  Let $\S$ be an  $I$-system of perfect difference fields on $V$ over $A$. Fix $ u \subseteq V$ and  take $J$ any  collection of subsets of $V$. Then  $\S(V) \prod_{v \in J \cup I_u} \S(v)^{alg}$ is linearly disjoint from $\S(u)^{alg}$ over $\S(u) \prod_{v \in J_u \cup I_u} \S(v)^{alg}$. 


%
%
%
%
 
\end{lemma}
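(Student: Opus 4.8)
The plan is to argue by induction on $|V|$. One peels off the unique finite invariant Galois extension $\S(V)/\S(V)^-$ at the top of the system, where $\S(V)^-:=\prod_{v\in P(V)^-}\S(v)$, reduces everything strictly below it to the inductive hypothesis applied to the subsystems $\S\upharpoonright I_w$ for $w\subsetneq V$, and then controls the top Galois extension by an argument in the spirit of the Generalised Independence Theorem of \cite{ChatzidakisHrushovski1999}. I expect this last step, and the bookkeeping needed to reach it, to be the main obstacle.

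First a few harmless reductions. Linear disjointness of named subfields of $K$ is absolute --- unaffected by replacing $K$ with an algebraically closed inversive extension --- so we may assume $A$ is algebraically closed, inversive, and existentially closed in $K$, and that $K$ is as saturated as we please (so that Lemma~\ref{lemma: effective algebraic closure in systems} can be used where helpful). As $\prod_{v\in J}\S(v)^{alg}$ depends only on $\partial J$, we may take $J$ downward closed; then $J\cup I_u$ and $J_u\cup I_u$ are downward closed too. Put $N:=\S(u)\prod_{v\in J_u\cup I_u}\S(v)^{alg}$ and $M:=\S(V)\prod_{v\in J\cup I_u}\S(v)^{alg}$; since $J_u\cup I_u\subseteq P(u)$ we have $N\subseteq\S(u)^{alg}$, and since $J_u\subseteq J$ and $I_u\subseteq J\cup I_u$ we have $N\subseteq M$, so what is to be shown is exactly that $M$ is linearly disjoint from $\S(u)^{alg}$ over $N$. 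If $u=V$ then $M=N$; if $V\in J$ then $u=V\cap u\in J_u$ forces $N=\S(u)^{alg}$; in either case there is nothing to prove. Hence we may assume $u\subsetneq V$ and $V\notin J\cup I_u$, so that $M=\S(V)\cdot M_0$ where $M_0:=\S(V)^-\prod_{v\in J\cup I_u}\S(v)^{alg}$ depends only on $\{\S(v):v\subsetneq V\}$, i.e. on the subsystems $\S\upharpoonright I_w$, $w\subsetneq V$.

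The next step is to show $M_0$ is linearly disjoint from $\S(u)^{alg}$ over $N$. One builds $M_0$ as a tower $N=F_0\subseteq F_1\subseteq\cdots\subseteq F_k=M_0$, adjoining at each stage --- in some order refining inclusion of subsets of $V$ --- a field of one of three types: a generator field $\S\{i\}$ with $i\in V\setminus u$; an algebraic closure $\S(v)^{alg}$ with $v\in J$, $v\not\subseteq u$; or a lower Galois generator $c_w$ (so $\S(w)=\S(w)^-(c_w)$) with $w\in I$, $2\le|w|<|V|$, $w\not\subseteq u$. By the Towers property it is enough that at each stage $F_{j+1}$ is linearly disjoint from $F_j\cdot\S(u)^{alg}$ over $F_j$. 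For the first two types of stage this follows from the hypothesis that $\{\S\{i\}:i\in V\}$ is independent over $A$, together with the Towers property (so that linear disjointness persists when the base is enlarged on one side), using the identity $\S(w)^{alg}=(\prod_{i\in w}\S\{i\})^{alg}$ valid for every $w$. For the third type one invokes the lemma for the strictly smaller set $w$, applied to $\S\upharpoonright I_w$ with $w\cap u$ in the role of $u$ and a suitable trace of $J\cup I_u$ in the role of $J$: it gives that $\S(w)$ is linearly disjoint from $\S(w\cap u)^{alg}$ over the analogous base inside $\S(w)^-$, and this combines with the disjointnesses already obtained, once more by the Towers property, to yield the required step. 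The subtlety is to order the stages so that when $c_w$ is adjoined the field $F_j$ accumulated so far is precisely the base demanded by the inductive hypothesis for $\S\upharpoonright I_w$.

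By the Towers property again, it now remains to prove that $M=\S(V)\cdot M_0$ is linearly disjoint from $M_0\cdot\S(u)^{alg}$ over $M_0$; this is the crux. Since $\S(V)/\S(V)^-$ is finite Galois and $\S(V)^-\subseteq M_0$, the claim is equivalent to $\S(V)\cap\bigl(M_0\cdot\S(u)^{alg}\bigr)=\S(V)^-$, i.e. to the restriction $\mathrm{Gal}\bigl(\S(V)\cdot M_0\S(u)^{alg}\,/\,M_0\S(u)^{alg}\bigr)\to\mathrm{Gal}\bigl(\S(V)/\S(V)^-\bigr)$ being onto; equivalently, the minimal polynomial of the Galois generator of $\S(V)$ over $\S(V)^-$ should remain irreducible over $M_0\S(u)^{alg}$. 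Here I would argue as in the proof of the Generalised Independence Theorem in \cite{ChatzidakisHrushovski1999}: the compositum $M_0\cdot\S(u)^{alg}$ lies \emph{freely} over $\S(V)^-$ --- which is exactly the content of the linear disjointnesses assembled in the previous paragraph, together with the independence of the $\S\{i\}$ and the fact that $\S(V)$, being irreducible over the algebraically closed field $A$, is absolutely irreducible --- and this freeness lets one lift every $g\in\mathrm{Gal}(\S(V)/\S(V)^-)$ to an automorphism of $\S(V)\cdot M_0\S(u)^{alg}$ over $M_0\S(u)^{alg}$ restricting to $g$. Once this is in place the induction closes. The genuinely fiddly part of a complete write-up will be the bookkeeping in the two preceding paragraphs: tracking which compositum of subfields serves as base at each stage of the tower, and matching it against the bases that appear in the inductive hypothesis for the various subsystems $\S\upharpoonright I_w$.
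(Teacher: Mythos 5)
Your proposal takes a fundamentally different route from the paper's. Where you propose an induction on $|V|$ that peels off the top Galois extension $\S(V)/\S(V)^-$, builds up $M_0$ by a tower, and closes with a Galois-lifting step in the style of the Generalised Independence Theorem, the paper gives a one-shot stability argument: a putative linear dependence of a finite tuple $(a_i)\subseteq\S(u)^{alg}$ over $\S(V)\prod_{J\cup I_u}\S(v)^{alg}$ is written as a field-language formula $\phi$ whose parameters split into a $u$-side part (in $\prod_{I_u}\S(v)^{alg}$ and in the fields $\S(v\cap u)$) and a non-$u$-side part (algebraic over the generators $\S\{i\}$, $i\notin u$); by the independence of $(\S\{i\})_{i\in V}$ over $A$ the non-$u$-side parameters are algebraically independent from the rest, and stability of ACF lets one replace them by parameters in $A^{alg}$ without losing $\phi$, producing a linear dependence over $\S(u)\prod_{J_u\cup I_u}\S(v)^{alg}$. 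There is no induction and no Galois-theoretic bookkeeping.

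Beyond the stylistic difference, your Step 2 has a genuine gap. You reduce the linear disjointness of $M=\S(V)M_0$ from $M_0\S(u)^{alg}$ over $M_0$ to the equality $\S(V)\cap M_0\S(u)^{alg}=\S(V)^-$ and then propose to lift every $g\in\mathrm{Gal}(\S(V)/\S(V)^-)$ to $\mathrm{Gal}\bigl(\S(V)M_0\S(u)^{alg}/M_0\S(u)^{alg}\bigr)$. That reduction is not correct. Since $\S(V)M_0/M_0$ is Galois, linear disjointness over $M_0$ is equivalent to $\S(V)\cap M_0\S(u)^{alg}=\S(V)\cap M_0$, and passing from $\S(V)\cap M_0$ to $\S(V)^-$ silently assumes $\S(V)\cap M_0=\S(V)^-$. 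That assumption is nothing but a form of regularity of $\S$, which the lemma does \emph{not} hypothesize --- indeed the lemma is used precisely to build regular refinements from arbitrary systems. For a non-regular system and $J$ large (say $J=P(V)^-$, so that $M_0=\prod_{P(V)^-}\S(v)^{alg}$), one has $\S(V)\cap M_0\supsetneq\S(V)^-$, the equality you want to prove is already false before $\S(u)^{alg}$ enters the picture, and the proposed lift of all of $\mathrm{Gal}(\S(V)/\S(V)^-)$ over $M_0\S(u)^{alg}$ cannot exist. Salvaging your strategy would require targeting $\S(V)\cap M_0\S(u)^{alg}=\S(V)\cap M_0$ directly, i.e.\ lifting only $\mathrm{Gal}(\S(V)/\S(V)\cap M_0)$; but then the ``freeness over $\S(V)^-$'' you appeal to is no longer the right statement, and none of the disjointnesses assembled in Step 1 say anything about the intermediate field $\S(V)\cap M_0$, so the crux is not actually closed. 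The paper's forking argument sidesteps the issue entirely: it never needs to control $\S(V)\cap M_0$, because it works at the level of a single linear-dependence formula and pushes the irrelevant parameters down into $A^{alg}$.
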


\begin{proof}
Let $(a_i)$ be a finite collection of elements of $\S(u)^{alg}$ and suppose that $(a_i)$ is linearly dependent over $\S(V) \prod_{J \cup I_u} \S(v)^{alg}$ as witnessed by elements $(b_i)$. We can find
\begin{enumerate}
\item  a tuple $c\subseteq \prod_{I_u} \S(v)^{alg}$
\item tuples $d_v$ where $d_v \subseteq \S(v)^{alg}$ for $v \in J$
\item tuples $e_v$ where $e_v \subseteq \S(v)$ for $v \in I$
\item rational maps $f_i(x,y, z)$ for all $i$

\end{enumerate} 
such that $b_i = f_i(c, (d_v)_J, (e_v)_I)$ for all $i$. 

For every $v \in J$   choose tuples $\alpha_v 
\subseteq \S(v \cap u)$ and $\beta_v \subseteq \S(v \setminus u)$ such that $d_v$ is algebraic over $A(\alpha_v, \beta_v)$. Similarly, for every $v \in I$, choose tuples $\alpha'_v \subseteq \S(u \cap v)$ and $\beta'_v \subseteq \S(u \setminus v)$ such that $e_v$ is algebraic over $A(\alpha'_v, \beta'_v)$.

Then $(a_i)$ satisfies some formula  
\[
\phi((x_i),c, (\alpha_v), (\alpha'_v), (\beta_v), (\beta'_v))
\]
  which says that $(a_i)$ is linearly dependent over $\S(V) \prod_{J \cup I_u} \S(v)^{alg}$. This formula is in the language of   fields.
By elementary stability theory, we find tuples $(\delta_v)$ and $(\delta_v')$ in $A^{alg}$ such that $(a_i)$ satisfies $\phi((x_i),c, (\alpha_v), (\alpha'_v), (\delta_v), (\delta'_v))$ and this entails that $(a_i)$ is linearly dependent over $\S(u) \prod_{J_u \cup I_u} \S(v)^{alg}$
\end{proof}

We now prove some useful properties of regular systems. We fix a   system $\S$ of   difference fields on $V$ over $A$.

\begin{lemma}\label{lemma: regular systems: first linear disjointness lemma}
 Suppose $\S$ is regular. For all  $u \subseteq V$, $\S(u)$ is linearly disjoint from $\prod_{v \in P(V, u, |u|-1)} \S(v)^{alg}$ over $\prod_{v\in P(u)^-} \S(v)$. 
\end{lemma}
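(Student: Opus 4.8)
The plan is to reduce the statement, by induction on $|u|$, to the definition of regularity applied to suitable subsets, using Lemma \ref{lemma: linear disjointness in systems} as the main engine. Fix $u \subseteq V$. Note that $P(V, u, |u|-1)$ is exactly the collection of subsets $v \subseteq V$ with $|v \cap u| \leq |u| - 1$, i.e. the subsets that do not contain $u$; in particular $P(u)^- \subseteq P(V, u, |u|-1)$, so the claim is that enlarging the composite from $\prod_{v \in P(u)^-}\S(v)^{alg}$ to $\prod_{v \in P(V,u,|u|-1)}\S(v)^{alg}$ does not destroy linear disjointness from $\S(u)$. By regularity, $\S(u)$ is already linearly disjoint from $\prod_{v \in P(u)^-}\S(v)^{alg}$ over $\prod_{v \in P(u)^-}\S(v)$, so by the Towers property it suffices to show that $\S(u)\prod_{v \in P(u)^-}\S(v)^{alg}$ is linearly disjoint from $\prod_{v \in P(V,u,|u|-1)}\S(v)^{alg}$ over $\prod_{v \in P(u)^-}\S(v)^{alg}$.

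Now I would apply Lemma \ref{lemma: linear disjointness in systems}: with $I$ the simplicial complex of the system, the lemma gives, for any collection $J$ of subsets of $V$, that $\S(V)\prod_{v \in J \cup I_u}\S(v)^{alg}$ is linearly disjoint from $\S(u)^{alg}$ over $\S(u)\prod_{v \in J_u \cup I_u}\S(v)^{alg}$. The point is that every $v \in P(V,u,|u|-1)$ satisfies $v \cap u \subsetneq u$, hence $v \cap u \in P(u)^-$; so choosing $J = P(V,u,|u|-1)$ (or just the maximal such $v$), we get $J_u \subseteq P(u)^-$ and $I_u \subseteq P(u)^-$ as well (since $u \notin I$ forces $i \cap u \subsetneq u$ for $i \in I$, or one restricts to the relevant part), so that $\S(u)\prod_{v \in J_u \cup I_u}\S(v)^{alg} \subseteq \S(u)\prod_{v \in P(u)^-}\S(v)^{alg}$. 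This yields that $\prod_{v \in P(V,u,|u|-1)}\S(v)^{alg}$ (which lies inside $\S(V)\prod_{v \in J \cup I_u}\S(v)^{alg}$) is linearly disjoint from $\S(u)^{alg}$, hence from $\S(u)$, over a field contained in $\S(u)\prod_{v \in P(u)^-}\S(v)^{alg}$, and a further Towers step over that intermediate field finishes the reduction.

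The main obstacle I anticipate is bookkeeping with the index sets: one must check carefully that $(P(V,u,|u|-1))_u \subseteq P(u)^-$ and that the composite $\prod_{v \in P(u)^-}\S(v)^{alg}$ genuinely contains the field $\S(u)\prod_{v \in J_u \cup I_u}\S(v)^{alg}$ produced by Lemma \ref{lemma: linear disjointness in systems}, so that the Towers property can be applied cleanly; there is also the minor point that elements of $P(V,u,|u|-1)$ with $|v| \leq |u|$ but $v \not\subseteq u$ still have $v \cap u \in P(u)^-$, which is what makes the argument go through. Once the index arithmetic is pinned down, the proof is just two applications of the Towers property bracketing one application of Lemma \ref{lemma: linear disjointness in systems} together with the definition of regularity. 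I would also remark that the case $|u| \leq 1$ is degenerate ($P(V,u,|u|-1)$ contains no set meeting $u$, so $\prod_{v \in P(u)^-}\S(v) = A$ and the statement is the independence of the $\S\{i\}$ over $A$), which can be dispatched first to anchor any induction if one prefers an inductive presentation.
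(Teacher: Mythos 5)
Your overall plan is the same as the paper's: reduce, via regularity and the Towers property, to showing that $\S(u)\prod_{v\in P(u)^-}\S(v)^{alg}$ is linearly disjoint from $\prod_{v\in P(V,u,|u|-1)}\S(v)^{alg}$ over $\prod_{v\in P(u)^-}\S(v)^{alg}$, and then to get that statement from Lemma~\ref{lemma: linear disjointness in systems} with $J = P(V,u,|u|-1)$. The reduction step is fine and matches the paper exactly.

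There is, however, a real gap in the application of Lemma~\ref{lemma: linear disjointness in systems}. You apply it to $\S$ itself with ``$I$ the simplicial complex of the system'' and assert $I_u\subseteq P(u)^-$ ``since $u\notin I$ (or one restricts to the relevant part)''. But this dichotomy is exactly where the argument breaks: every system is a $P(V)$-system, and if you take $I=P(V)$ then $u\in I$, $I_u=P(u)$, and the base field in the conclusion of Lemma~\ref{lemma: linear disjointness in systems}, namely $\S(u)\prod_{J_u\cup I_u}\S(v)^{alg}$, contains $\S(u)^{alg}$, so the lemma's output is vacuous. On the other hand, if $\S$ is an $I$-system for some $I$ with $u\notin I$, then by definition $\S(u)=\prod_{P(u)^-}\S(v)$ and the lemma you are trying to prove is itself trivial. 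So the direct application to $\S$ never yields anything in the case of interest, and the hedge ``or one restricts to the relevant part'' is precisely the missing content, not an afterthought. Likewise, your final step only claims linear disjointness ``over a field contained in $\S(u)\prod_{P(u)^-}\S(v)^{alg}$''; but linear disjointness over a larger base is a \emph{weaker} statement, so one cannot pass from that to linear disjointness over $\prod_{P(u)^-}\S(v)^{alg}$ by ``a further Towers step''.

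The paper's fix is to apply Lemma~\ref{lemma: linear disjointness in systems} not to $\S$ but to the truncated system $\S'=\S\upharpoonright P_{|u|-1}(V)$, whose simplicial complex $P_{|u|-1}(V)$ does not contain $u$, so that $I'_u=P(u)^-$ and the base field in the lemma's conclusion is exactly $\S'(u)\prod_{P(u)^-}\S'(v)^{alg}=\prod_{P(u)^-}\S(v)^{alg}$. The point that makes the truncation harmless is that $\S'(v)$ differs from $\S(v)$ only by towers of finite extensions, so $\S'(v)^{alg}=\S(v)^{alg}$ for every $v$; hence the field on the other side of the lemma's conclusion is again $\prod_{P(V,u,|u|-1)}\S(v)^{alg}$ and $\S'(u)^{alg}=\S(u)^{alg}$. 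If you add this restriction to $\S'$ and the observation about algebraic closures, your proof becomes the paper's.
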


\begin{proof}
Let $\S' = \S \upharpoonright P_{|u|-1}(V)$. By Lemma \ref{lemma: linear disjointness in systems}, $\S(u) \prod_{P(u)^-} \S(v)^{alg}$ is linearly disjoint from $\prod_{P(V, u, |u|-1)} \S(v)^{alg}$ over $\S'(u) \prod_{P(u)^-} \S(v)^{alg}= \prod_{P(u)^-} \S(v)^{alg}$. The lemma follows from the definition of regularity and the Towers property.
\end{proof}

\begin{lemma}\label{lemma: regular systems: second linear disjointness lemma}
Suppose $\S$ is regular. Let $I$ be a collection of subsets of $V$. Then $\S(V)$ is linearly disjoint from $\prod_{v \in I} \S(v)^{alg}$ over $\prod_{v\in I} \S(v)$. 

\end{lemma}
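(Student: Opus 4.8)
The plan is to reduce to a downward-closed $I$ that does not contain $V$, and then induct on the number of proper subsets of $V$ missing from $I$, building $I$ up towards $P(V)^-$ one set at a time and using Lemma~\ref{lemma: regular systems: first linear disjointness lemma} together with the tower law for linear disjointness (the ``Towers property'') at each step.

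First I would make two harmless reductions. Replacing $I$ by its downward closure $\partial I$ changes neither $\prod_{v\in I}\S(v)$ nor $\prod_{v\in I}\S(v)^{alg}$, since $\S$ is a functor whose arrows are inclusions, so $\S(v)\subseteq\S(u)$ whenever $v\subseteq u$. And if $V\in I$ the statement is trivial, because then $\S(V)$ is one of the composed fields, hence contained in the base $\prod_{v\in I}\S(v)$. So we may assume $I$ is a downward-closed subcollection of $P(V)^-$ (the degenerate cases $V=\emptyset$ and $I=\emptyset$ being trivial, so in fact $\emptyset\in I$). Now induct on $|P(V)^-\setminus I|$. When this number is $0$ we have $I=P(V)^-$, and the assertion is precisely Lemma~\ref{lemma: regular systems: first linear disjointness lemma} applied with $u=V$, since $P(V,V,|V|-1)=P(V)^-$.

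For the inductive step, suppose $I\subsetneq P(V)^-$ and choose $w\in P(V)^-\setminus I$ of minimal cardinality. Minimality forces every proper subset of $w$ to lie in $I$, so $P(w)^-\subseteq I$ and $I':=I\cup\{w\}$ is again downward closed and avoids $V$, with $|P(V)^-\setminus I'|$ smaller by one. Writing $B_0=\prod_{v\in I}\S(v)$ and $C_0=\prod_{v\in I}\S(v)^{alg}$, the induction hypothesis applied to $I'$ reads $\S(V)\ind_{B_0\S(w)}C_0\S(w)^{alg}$ (using $\prod_{v\in I'}\S(v)=B_0\S(w)$ and $\prod_{v\in I'}\S(v)^{alg}=C_0\S(w)^{alg}$). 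Since $B_0\S(w)\subseteq\S(V)$, the tower law splits the target $\S(V)\ind_{B_0}C_0$ into: (i) $B_0\S(w)\ind_{B_0}C_0$, and (ii) $\S(V)\ind_{B_0\S(w)}B_0\S(w)C_0$. Statement (ii) follows from the induction hypothesis by shrinking the right-hand field, as $B_0\S(w)\subseteq B_0\S(w)C_0\subseteq C_0\S(w)^{alg}$. For (i), apply Lemma~\ref{lemma: regular systems: first linear disjointness lemma} at $u=w$ to obtain $\S(w)\ind_{D}\prod_{v\,:\,w\not\subseteq v}\S(v)^{alg}$, where $D=\prod_{v\in P(w)^-}\S(v)$; since $P(w)^-\subseteq I$ we have $D\subseteq B_0\subseteq C_0$, and since $I$ is downward closed with $w\notin I$ no member of $I$ contains $w$, so $C_0\subseteq\prod_{v\,:\,w\not\subseteq v}\S(v)^{alg}$. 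Shrinking the right-hand field again gives $\S(w)\ind_D C_0$, and a second use of the tower law along the chain $D\subseteq B_0\subseteq C_0$ (noting $D\subseteq\S(w)$) upgrades this to $B_0\S(w)\ind_{B_0}C_0$, which is (i). This closes the induction.

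The proof is mostly bookkeeping once Lemma~\ref{lemma: regular systems: first linear disjointness lemma} is available; the point requiring genuine care is the base-field descent, where one must pick $w\in P(V)^-\setminus I$ so that a \emph{single} choice simultaneously yields $P(w)^-\subseteq I$ (needed to place the base $D$ of Lemma~\ref{lemma: regular systems: first linear disjointness lemma} below $B_0$) and that no set in $I$ contains $w$ (needed to place $C_0$ below $\prod_{v\,:\,w\not\subseteq v}\S(v)^{alg}$) — choosing $w$ of minimal cardinality does exactly this, and it is what makes both tower-law applications legitimate.
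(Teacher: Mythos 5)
Your proof is correct, and it reaches the conclusion by a genuinely different induction scheme from the paper's. The paper keeps $I$ fixed and introduces an auxiliary downward-closed collection $J$, proving $\prod_{J\cup I}\S(v)\ind_{\prod_I\S(v)}\prod_I\S(v)^{alg}$ by induction as $J$ grows from $\emptyset$ (where the claim is trivial) up to $P(V)$ (where the left-hand composite becomes $\S(V)$); at each step a single $u$ with $P(u)^-\subseteq J'$ is added, and the new disjointness is supplied by Lemma~\ref{lemma: regular systems: first linear disjointness lemma} at $u$ together with the Towers property. You instead normalize $I$ first (downward-close it and discard the trivial case $V\in I$), hold the top field $\S(V)$ fixed, and induct on $|P(V)^-\setminus I|$: the base case $I=P(V)^-$ is exactly Lemma~\ref{lemma: regular systems: first linear disjointness lemma} at $u=V$, and each step adds a minimal $w$ to $I$, invokes the hypothesis for $I\cup\{w\}$, and fills the remaining gap with Lemma~\ref{lemma: regular systems: first linear disjointness lemma} at $w$. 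The two arguments are roughly dual --- one enlarges the top field from $\prod_I\S(v)$ to $\S(V)$, the other descends the base from $\prod_{P(V)^-}\S(v)$ to $\prod_I\S(v)$ --- and each invokes Lemma~\ref{lemma: regular systems: first linear disjointness lemma} once per step, so neither is appreciably shorter. One thing your preliminary normalization does buy is that it surfaces a containment the paper leaves implicit: the paper's step needs $\prod_{J'\cup I}\S(v)$ and $\prod_{J'}\S(v)\prod_I\S(v)^{alg}$ to lie inside $\prod_{v:\,u\not\subseteq v}\S(v)^{alg}$, which requires that no member of $I$ contain the newly added $u$; when some $v\in I$ does contain $u$ one must observe separately that the step is vacuous because $\S(u)\subseteq\prod_I\S(v)$. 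Your choice of a minimal $w\notin I$ with $I$ downward-closed handles this point cleanly.
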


\begin{proof}
We proceed by induction to show that for any downward-closed collection $J$ of subsets of $V$,  $\prod_{J \cup I} \S(v)$ is linearly disjoint from $\prod_{I} \S(v)^{alg}$ over $\prod_I \S(v)$. The case $J = \emptyset$ is trivial, so we assume the claim holds for some $J'$ where $J = J' \cup \{u\}$ and $P(u)^- \subseteq J'$. 

By the Towers property, it is enough to show that $\S(u) \prod_{J' \cup  I} \S(v)$ is linearly disjoint from $\prod_{J'} \S(v)\prod_I \S(v)^{alg}$ over $\prod_{J' \cup I } \S(v)$. By Lemma \ref{lemma: regular systems: first linear disjointness lemma}, $\S(u)$ is linearly disjoint from $\prod_{P(V, u, |u|-1)} \S(v)^{alg}$ over $\prod_{P(u)^-} \S(v)$ and the result follows   by the Towers property.
\end{proof}

\begin{lemma}\label{lemma: regular systems: third linear disjointness lemma}
Suppose $\S$ is regular. Let $u \subseteq V$ and let $I$ be a collection of subsets of $V$. Then $\S(u)$ is linearly disjoint from $\prod_{v \in I} \S(v)^{alg}$ over $\prod_{v \in I_u} \S(v)$. 

 \end{lemma}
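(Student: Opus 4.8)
The plan is to reduce Lemma \ref{lemma: regular systems: third linear disjointness lemma} to Lemma \ref{lemma: regular systems: second linear disjointness lemma} by passing to a restricted system supported on a simplicial complex around $u$. First I would set $\S' = \S \upharpoonright P(u)$, so that $\S'$ is a system of difference fields on $u$ over $A$ whose top field is $\S'(u) = \S(u)$ (since $u \in I$ whenever $\S$ is a $P(u)$-system — more precisely, $\S \upharpoonright P(u)$ has top vertex set $u$ and $(\S \upharpoonright P(u))(u) = \prod_{v \in P(u)} \S(v) \supseteq \S(u)$, and conversely $\S(u)$ already contains all the $\S(v)$ for $v \subsetneq u$, so the two agree). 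The key point is that $\S'$ is again regular: regularity of $\S$ says exactly that for every nonempty $w$, $\S(w)$ is linearly disjoint from $\prod_{v \in P(w)^-} \S(v)^{alg}$ over $\prod_{v \in P(w)^-} \S(v)$, and this condition for $w \subseteq u$ is inherited verbatim by $\S'$.

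Next I would apply Lemma \ref{lemma: regular systems: second linear disjointness lemma} to the regular system $\S'$ on the vertex set $u$, with the collection of subsets taken to be $I_u = \{v \cap u \mid v \in I\}$ (a collection of subsets of $u$). That lemma gives that $\S'(u) = \S(u)$ is linearly disjoint from $\prod_{w \in I_u} \S'(w)^{alg}$ over $\prod_{w \in I_u} \S'(w)$. Now for $w \in I_u$, say $w = v \cap u$ with $v \in I$, the field $\S'(w) = \prod_{w' \in P(w)} \S(w')$ is a subfield of $\S(v)$ (each $w' \subseteq w \subseteq v$), and its algebraic closure $\S'(w)^{alg} = \S(w)^{alg}$ sits inside $\S(v)^{alg}$. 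Conversely, we do not immediately get $\prod_{w \in I_u} \S'(w)^{alg} = \prod_{v \in I} \S(v)^{alg} \cap (\text{something})$, so a little care is needed here: the statement to be proved talks about $\prod_{v \in I} \S(v)^{alg}$, not $\prod_{w \in I_u} \S(w)^{alg}$. To bridge this, I would first invoke Lemma \ref{lemma: linear disjointness in systems} (with $J = I$ and the fixed set $u$) to conclude that $\S(u)^{alg}$ — hence $\S(u)$ — is linearly disjoint from $\S(V)\prod_{v \in I \cup I_u}\S(v)^{alg}$ over $\S(u)\prod_{v \in I_u}\S(v)^{alg}$, and in particular, restricting to subfields, $\S(u)$ is linearly disjoint from $\prod_{v \in I}\S(v)^{alg}$ over $\S(u)\prod_{v \in I_u}\S(v)^{alg}$. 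Combining this (via the Towers property) with the linear disjointness of $\S(u)$ from $\prod_{v \in I_u}\S(v)^{alg}$ over $\prod_{v \in I_u}\S(v)$ obtained from Lemma \ref{lemma: regular systems: second linear disjointness lemma}, we get the desired linear disjointness of $\S(u)$ from $\prod_{v \in I}\S(v)^{alg}$ over $\prod_{v \in I_u}\S(v)$.

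The main obstacle I anticipate is bookkeeping around the two different composites $\prod_{v \in I}\S(v)^{alg}$ and $\prod_{v \in I_u}\S(v)^{alg}$ (equivalently $\prod_{w \in I_u}\S'(w)^{alg}$): one must be careful that $\prod_{v \in I_u}\S(v) = \prod_{v \in I_u}\S'(v)$ (which holds because for $w \in I_u$, $\S(w) = \S'(w)$ when $\S$ is a $P(u)$-system, or more generally because $\S'(w)$ already contains $\S(w)$ and vice versa at the level $w \subseteq u$), and that the Towers-property chaining is set up with the intermediate field $\S(u)\prod_{v \in I_u}\S(v)^{alg}$ correctly. Once the fields are lined up, each individual linear-disjointness step is an application of a previously established lemma plus the Towers property, so there is no genuinely new ingredient — the content is entirely in choosing the right restriction $\S \upharpoonright P(u)$ and the right instance $J = I$ of Lemma \ref{lemma: linear disjointness in systems}.
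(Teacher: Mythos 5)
Your overall strategy matches the paper's exactly: obtain $\S(u)$ linearly disjoint from $\prod_{v \in I_u}\S(v)^{alg}$ over $\prod_{v \in I_u}\S(v)$ by applying Lemma~\ref{lemma: regular systems: second linear disjointness lemma} to a restricted system, get a second linear disjointness from Lemma~\ref{lemma: linear disjointness in systems} to pass from the collection $I_u$ to the collection $I$, and splice them with the Towers property. Your reduction to $\S \upharpoonright P(u)$ and the observation that it is regular are fine, and your application of Lemma~\ref{lemma: regular systems: second linear disjointness lemma} is correct (indeed more explicit than the paper, which uses the restriction implicitly).

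The problem is in what you extract from Lemma~\ref{lemma: linear disjointness in systems}. You conclude that ``$\S(u)$ is linearly disjoint from $\prod_{v\in I}\S(v)^{alg}$ over $\S(u)\prod_{v\in I_u}\S(v)^{alg}$.'' But the base $\S(u)\prod_{v\in I_u}\S(v)^{alg}$ already contains $\S(u)$, so this statement is vacuous: any field is linearly disjoint from anything over a base containing it. It carries no information and cannot feed the Towers chain. What the Towers argument actually requires, taking $K = \prod_{I_u}\S(v)$, $M = \prod_{I_u}\S(v)^{alg}$ and $N = \prod_{I}\S(v)^{alg}$, is the piece $LM \ind_M MN$ with $L = \S(u)$, namely
\[
\S(u)\prod_{v\in I_u}\S(v)^{alg} \quad \text{linearly disjoint from} \quad \prod_{v\in I}\S(v)^{alg} \quad \text{over} \quad \prod_{v\in I_u}\S(v)^{alg},
\]
where $\S(u)$ does \emph{not} occur in the base. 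This is precisely what the paper asserts from Lemma~\ref{lemma: linear disjointness in systems}, and it is strictly stronger than your version. To obtain it, one must instantiate Lemma~\ref{lemma: linear disjointness in systems} after first restricting $\S$ to the simplicial complex generated by $I$ (so that the relevant composite $\prod_I \S(v)$ plays the role of the top field, and the base that the stability argument yields lies inside $\prod_{I_u}\S(v)^{alg}$ rather than containing $\S(u)$). As written, your step (3) never produces this, so the Towers composition you invoke at the end has a missing link. You flagged the base-bookkeeping as a potential concern; it is in fact the one place where the proof genuinely requires care, and the particular weakening you made there loses exactly the content that the lemma was supposed to provide.
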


\begin{proof}
We can assume that $I$ is a simplicial complex of $V$ and that $u \notin I$. Let $\S'$ be the restriction of $\S$ to $I$, so that $\S'(V)  = \prod_{I}\S(v)$ and $\S'(u) = \prod_{I_u} \S(v)$. Applying Lemma \ref{lemma: linear disjointness in systems}, $\S(u)\prod_{I_u} \S(v)^{alg}$ is linearly disjoint from $  \prod_{I} \S(v)^{alg}$ over $ \prod_{I_u} \S(v)^{alg}$. 

By Lemma  \ref{lemma: regular systems: second linear disjointness lemma}, $\S(u)$ is linearly disjoint from $\prod_{I_u}\S(v)^{alg}$ over $\prod_{I_u} \S(v)$ and the result follows by the Towers property.
%
%
%
%
\end{proof}

 \subsection{Regular systems of varieties}\label{subsection: regular systems of varieties}

We introduce systems of varieties. These are closely related to systems of difference fields and in fact all the key theorems in this paper could be formulated purely in terms of systems of difference fields.   However systems of varieties are more natural from a combinatorial point of view.

In what follows, we will work with projections between varieties. If $X, Y$ are difference varieties in affine space, we say that a map $\pi : X \to Y$ is a projection if $X \subseteq K^m$, $Y \subseteq K^n$ and $\pi$ restricts $X$ to certain coordinates in affine space. We say that $\pi$ is dominant if for every top-dimensional component $Z$ of $Y$, there is a top-dimensional component $Z'$ of $X$ such that $\pi$ sends $Z'$ to $Z$ and $\pi(Z')$ is not contained in a subvariety of $Z$. \footnote{\label{footnote:  use of projections} We use projections  because there are a few places in this chapter where it is useful to refer to the implicit syntactical structure of Galois formulas.  By default, we take Galois formulas   to be the result of projecting one variety onto another. Another advantage of working with projections is that it is easy to see when commutative diagrams arise. This will be essential in our arguments.  However all our results generalise easily to systems of varieties with general morphisms of difference varieties instead of projections.}

Let $V$ be a finite set and let $A$ be a perfect inversive difference field. 
 Let $\Om$ be a contravariant functor from $P(V)$ to \emph{difference varieties over $A$ of finite total dimension with   dominant projections $\Om(u) \to \Om(v)$ when $v \subseteq u$}.   We always assume that $\Om(\emptyset) = 0$, viewed as a variety in $K^0$, and that $\Om(u)$ has positive total dimension over $A$ when $u \neq \emptyset$.

For any downward closed collection $I$ of subsets of $V$,  write $\prod(\Om(v), v\in I)$ for the fibre product of the family $(\Om(v))_{v \in I}$.  This is a difference variety with dominant projections to each $\Om(v)$ such that for any $v \subseteq v' \in I$, the projection $\prod(\Om(v), v\in I) \to \Om(v') \to \Om(v)$ equals the projection $\prod(\Om(v), v\in I) \to \Om(v)$. This variety has the usual universal property of fibre products.  

%
%

\begin{definition}\label{definition: systems of varieties}
Let $\Om$ be a functor on $P(V)$ as above. We say that $\Om$ is a system of   varieties on $V$ over $A$ if for all $u \subseteq V$ with $|u| \geq 2$,  
\begin{enumerate}
\item there is a finite family of polynomials $\mathcal{P}_u$ over $A(x)_\si$ such that points of $\Om(u)$ can be expressed as pairs $(a, b)$ with $a \in \prod(\Om(v), v \in P(u)^-)$ and $A(a, b)_\si$ is the splitting field of $\mathcal{P}_u$ over $A(a)_\si$ 
\item the projection $\Om(u) \to \prod(\Om(v), v \in P(u)^-)$ is dominant. Equivalently,  $dim(\Om(u)) = dim(\prod(\Om(v), v \in P(u)^-))$.
\end{enumerate}

When $I \subseteq P(V)$ is a simplicial complex, we say that $\Om$ is an $I$-system of difference varieties if for all $u \notin I$, $\Om(u) = \prod ( \Om(v) , v \in P(u)^-)$ (equivalently, $\mathcal{P}_u$ is empty).

We say that $\Om$ is irreducible if $\Om(V)$ is absolutely irreducible over $A$. 
\end{definition}

\noindent \textbf{Remarks:} (1) If $\Om$ is irreducible, then for all $u$ with $|u|  \geq 2$,  every projection from  $\Om(u)$ to the fibre product $\prod( \Om(v), v \in P(u)^-)$ has generically constant multiplicity. This will be useful for decomposing systems of varieties into disjoint sections in Section \ref{section: combinatorics}.

 (2) The connection between  Galois formulas and systems of varieties is clear: any Galois formula $\phi$ arises as the image of a projection $\Om(u) \to \prod(\Om(v), v \in P(u)^-)$ in some appropriate system of varieties $\Om$.  Hence systems of varieties are a useful framework for studying definable hypergraphs.

(3)  We remarked after Definition \ref{definition: protected system of difference fields} that the concept of a $P(V)^-$-system of difference fields should be viewed as analogous to the \emph{solution} of an amalgamation problem. Accordingly, a $P(V)^-$-system of   varieties can be viewed as being analogous to the amalgamation problem itself. 

 \medskip
 
 We will rely heavily on the notation introduced in the next definition.

 \begin{definition}\label{definition: restrictions inside systems of varieties}
 Let $\Om$ be a system of   varieties on $V$ over $A$. 
\begin{enumerate}
\item  For every $u \subseteq V$ with $|u| \geq 2$, write $\Om(u)^-$ for the fibre product $\prod (\Om(v) , v \in P(u)^-)$. 
\item For every $u  \subseteq V$, write $\rho_u\Om(u)$ for the projection of $\Om(u)$ onto $\Om(u)^-$. 
\end{enumerate} 
 
 We define analogously $\S(u)^- = \prod_{v \in P(u)^-} \S(v)$ when $\S$ is a system of difference fields on $V$ over $A$. 
 
 \end{definition}  

\begin{definition}\label{definition: refinement of system of varieties}
Let $\Om$   be a system of varieties on $V$ over $A$. Let $A' $ be a perfect inversive  difference field containing $A$ and $\Om'$ a system of varieties on $V$ over $A'$.

We say that $\Om'$ is a refinement of $\Om$ if for every $u \subseteq V$ there is a   finite dominant projection $\Om'(u) \to \Om(u)$ and for $u \subseteq v$, the projections $\Om'(u) \to \Om(u) \to \Om(v)$ and $\Om'(u) \to \Om'(v) \to \Om(v)$ commute.

 Let $I$ be a   collection of subsets of $V$.  We say that $\Om'$ is an $I$-refinement of $\Om$ if $\Om'$ is a refinement and  for every $u \notin I$, $\Om'(u)$ is the fibre product of the varieties $\Om(u), (\Om'(v))_{v \in I_u}$.

 We say that $\Om'$ is a surjective refinement of $\Om$ if the projection $\Om'(V) \to \Om(V)$ is generically surjective.
\end{definition}

\noindent \textbf{Remark:}  Taking an    extension of $A$ if necessary, every system of varieties over $A$ admits a partition  into irreducible systems. If $\Om'$ is a surjective refinement of $\Om$ over the extension $A'$, the  partition of $\Om'$ into irreducible components will give us a notion of   \'etale-partition of $\Om$.

\begin{definition}\label{definition: regular systems of varieties}
Let $\Om$ be a system of varieties on $V$ over $A$. Let $a$ be a generic point of $\Om(V)$ and write $a_u$ for the image of $a$ under the projection $\Om(V) \to \Om(u)$. We define the system of   difference fields associated to $a$ to be the system   $\S(u) = A(a_u)_\si^{insep}$.

 We also say that the system $\S$ above is a system of difference fields associated to $\Om$. When $\Om$ is irreducible, systems of difference fields associated to $\Om$ are unique up to isomorphism.

 We say that $\Om$ is a regular system of varieties  if $\Om$ is irreducible and the system of difference fields associated to $\Om$ is regular.
\end{definition}




\begin{proposition}\label{proposition: regular refinement of system of varieties}
  Let $\Om$ be an   $I$-system of     varieties over $A$. Then there is a surjective $\partial I$-refinement $\Om'$ of $\Om$ over an extension $A'$ of $A$ such that each irreducible component of $\Om'$  is regular.
\end{proposition}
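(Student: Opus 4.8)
The plan is to transport Proposition \ref{proposition: existence of regular refinements}, which lives in the category of systems of difference fields, to the category of systems of varieties via the correspondence that sends a system of varieties to the system of difference function fields of a generic point (Definition \ref{definition: regular systems of varieties}). The content of the statement is essentially all in Proposition \ref{proposition: existence of regular refinements}; what remains is the translation.

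First I would reduce to the irreducible case. After replacing $A$ by a finite extension $A_1$, the $I$-system $\Om$ partitions over $A_1$ into finitely many irreducible $I$-systems $\Om_1, \dots, \Om_k$, so it suffices to construct, for each $j$, a surjective $\partial I$-refinement $\Om_j'$ of $\Om_j$ over a common finitely generated extension $A'$ of $A$ all of whose irreducible components are regular, and to set $\Om' = \bigsqcup_j \Om_j'$. Fixing $j$, I pick a generic point $a^{(j)}$ of $\Om_j(V)$ and let $\S_j$ be the associated $I$-system of difference fields over $A_1$. By Proposition \ref{proposition: existence of regular refinements}, $\S_j$ has a regular $\partial I$-refinement $\S_j'$ over a finitely generated extension of $A_1$; I then choose $A'$ large enough, still finitely generated over $A$, to carry all the $\S_j'$ together with the $\Om_j$.

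Next I would reconstruct $\Om_j'$ from $\S_j'$. For each $u \in \partial I$ with $|u| \geq 2$, the field $\S_j'(u)$ is a finite invariant Galois extension of the difference function field of $\Om_j(u)$; presenting it by generators and their minimal polynomials over $A'(x)_\si$ yields a finite Galois cover $\Om_j'(u) \to \Om_j(u)$ over $A'$, and by making these presentations compatible with the projections $\Om_j(u) \to \Om_j(v)$ (this is where it helps that Galois formulas are presented as projections, cf.\ the footnote in Section \ref{subsection: regular systems of varieties}; general morphisms would serve equally well) one arranges all the refinement diagrams to commute. For $u \notin \partial I$ one takes $\Om_j'(u)$ to be the fibre product of $\Om_j(u)$ with the $\Om_j'(v)$, $v \in (\partial I)_u$. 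That $\Om_j'$ is then an honest \emph{system} of varieties over $A'$ --- dominance of $\Om_j'(u) \to \Om_j'(u)^-$ and the splitting-field form of $\Om_j'(u)$ over $\Om_j'(u)^-$ --- corresponds exactly to conditions (2) and (3) of Definition \ref{definition: system of difference fields} for $\S_j'$, once one identifies the function field of the fibre product $\Om_j'(u)^-$ with the composite $\prod_{v \in P(u)^-} \S_j'(v)$; this identification is precisely where the regularity of $\S_j'$ enters, via the linear disjointness statements of Lemmas \ref{lemma: regular systems: first linear disjointness lemma}--\ref{lemma: regular systems: third linear disjointness lemma}. By construction $\Om_j'$ is a $\partial I$-refinement of $\Om_j$ and $\Om_j'(V) \to \Om_j(V)$ is dominant, hence generically surjective, so $\Om' = \bigsqcup_j \Om_j'$ is a surjective $\partial I$-refinement of $\Om$. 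Finally, every irreducible component of $\Om_j'$ over $A'$ has associated system of difference fields isomorphic to $\S_j'$, since the cover $\Om_j'(V) \to \Om_j(V)$ is Galois and the presentations were chosen compatibly; as regularity of a system is invariant under isomorphism, every irreducible component of $\Om'$ is regular.

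I expect the only real difficulty to be organisational: checking that the finite Galois covers rebuilt from $\S_j'$ assemble into an honest system of varieties and a $\partial I$-refinement in the exact sense of Definitions \ref{definition: systems of varieties} and \ref{definition: refinement of system of varieties}, the delicate point being the compatibility between ``composite of difference function fields'' and ``function field of a fibre product'', which is exactly what the regularity of $\S_j'$ is there to guarantee. No new idea beyond Proposition \ref{proposition: existence of regular refinements} should be needed.
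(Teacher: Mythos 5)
Your strategy is the same as the paper's: reduce to the irreducible case, pass to the associated system of difference fields, apply Proposition \ref{proposition: existence of regular refinements}, and then read the regular refinement back into a refinement of systems of varieties via splitting fields of explicit families of polynomials. Your observations about the reconstruction step (that it happens through presenting the field extensions of $\S'$ by polynomials over $A'(x)_\si$ made compatible with the projections) match the paper's construction.

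However, your final step contains a genuine error. You assert that ``every irreducible component of $\Om_j'$ over $A'$ has associated system of difference fields isomorphic to $\S_j'$, since the cover $\Om_j'(V) \to \Om_j(V)$ is Galois.'' This is false, and the paper's proof is at pains to disavow exactly this: it explicitly notes that the system of difference fields associated to a generic point $b$ of $\Om'(V)$ ``may not be isomorphic to $\S'$ as a system of difference fields.'' The reason is that $\Om'(u)$ only constrains the \emph{pure field} $A'(c,d)$ to be the splitting field of $\mathcal{P}_u$; it places no constraint on which extension of $\si$ to that splitting field is realised. Different irreducible components of $\Om'(u)$ correspond to different, generally non-isomorphic, difference structures on the same pure splitting field. (Compare the worked example after the proposition: adding a square root $z$ of $y$ to a fixed-field variable admits $\si(z) = z$ and $\si(z) = -z$ as two distinct components.) The Galois-ness of the cover that you appeal to is a pure-field fact and does not control the difference structure, so it cannot deliver the difference-field isomorphism you claim.

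The gap is repairable with one observation you came close to but did not make explicit: the irreducible components of $\Om'$ all have associated systems that are isomorphic to $\S'$ \emph{as systems of pure fields}, because they are all built from the same splitting polynomials $\mathcal{P}_u$; and regularity (Definition \ref{definition: protected system of difference fields}) is a statement about linear disjointness of pure fields, hence depends only on this pure-field data. You should replace ``isomorphic to $\S_j'$'' with ``isomorphic to $\S_j'$ as systems of pure fields,'' and justify the invariance of regularity accordingly. As written, the justification of the last step is wrong even though the conclusion is right.
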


\begin{proof}
We can assume that $\Om$ is irreducible over $A$. Let $a$ be a generic point of $\Om(V)$ over $A$ and let $\S$ be the associated system of   difference fields on $V$ over $A$. Write $a_u$ for the projection of $a$ to $\Om(u)$. 

 Let $\S'$ be a regular $\partial I$-refinement of $\S$ over a finitely generated extension $A'$ of $A$, as given by Proposition \ref{proposition: existence of regular refinements}. For every $u \in \partial I$, let $\mathcal{P}_u$ be a family of   polynomials over $\S(u)$ such that $\S'(u)$ is the splitting field of $\mathcal{P}_u$ over $\S(u)$. We can assume that $\mathcal{P}_u \subseteq \mathcal{P}_{v}$ when $u \subseteq v$ and we can take $\mathcal{P}_u$ to be over $A'(a_u)_\si$. 
 
We can define a system $\Om'$ over $A'$ refining $\Om$ such that  the projection $\Om'(V) \to \Om(V)$ is generically surjective and for every $u \in \partial I$, $\Om'(u)$ is a variety of points of the form $(c, d)$ where $c \in \Om(u)$ and $A'(c, d)_\si$ is the splitting field of the polynomials $\mathcal{P}_u$ over $A'(c)_\si$.
 

 
Every irreducible component of  $\Om'$ is regular: for any generic point $b$ of $\Om'(V)$, even though the system of difference fields associated to $b$ may not be  isomorphic to $\S'$ as a system of difference fields, it is isomorphic to $\S'$ in the sense of pure fields, since the field extensions are just the splitting fields of the families $\mathcal{P}_v$. This guarantees regularity of the system associated to $b$. 
 \end{proof}
  
 
 \medskip

\noindent \textbf{Example:} We give a basic example of a $P(V)$-system $\Om$ with a regular $P(V)^-$-refinement. Working over the prime field of $K$ and taking $V = \{0, 1\}$, let $\Om(0)$ and $\Om(1)$ be copies of the fixed field.  Let $\Om(V)$ be the variety of points of the form $(x, y, z, t)$ where $x \in \Om(0)$, $y \in \Om(1)$, $z$ is a square root of $y$ with $\si(z) = z$ and $t$ is a square root of $x+z$ with $\si(t) = t$. Take $\Om'(0) = \Om(0)$, $\Om'(1)$ the variety of points of the form $(y, z)$ where $z$ is a square root of $y$ with $\si(z) = z$, and take $\Om'(V) = \Om(V)$. Then $\Om'$ is a regular $P(V)^-$-refinement of $\Om$. Note that the projection $\Om'(V) \to \Om(V)$ is surjective (in fact it is the identity) and the the projection $\Om'(1) \to \Om(1)$ is only dominant. This is consistent with Definition \ref{definition: refinement of system of varieties}. 

\medskip

While the definition of regularity for systems of difference fields and varieties is natural and easy to state, we note that it is slightly stronger than what will be needed in the rest of this paper. We make this precise in  Proposition \ref{proposition: geometric regularity}. First, we recall some classical notions of difference algebra. If $K$ is a difference field, recall that a difference field extension $L/K$ is \emph{monadic} if for any difference field extension $M/K$, there is at most one difference homomorphism $L \to M$ over $K$. Equivalently, if $L/K$ is Galois, then $L/K$ is monadic if and only if $\{g \in Gal(L/K) \mid g \si = \si g\text{ on }L\} = \{e\}$.

 Recall also that if $L/K$ is a finite Galois extension and is monadic, then $L/K$ is compatible with every difference field extension of $K$, meaning that for any $M/K$, there is $N/K$ and difference homomorphisms $L \to N$ and $M \to N$ over $K$. It follows that if $L/K$ is a finite Galois extension and is monadic, then $\si$ is the unique extension of $\si \upharpoonright K$ to $L$. See \cite{Cohn1965} or \cite{Levin2008} for details.

Let $\phi$ be a Galois formula over an inversive difference field $A$ corresponding to a projection of irreducible varieties $X \to Y$. Let $a$ be a generic point of $Y$ and $L$ the Galois extension of $A(a)_\si^{insep}$ associated to $\phi$.  By inspecting the Galois information present in the proof of Proposition \ref{proposition: alternative characterisation of measure}, we see that if $L/A(a)_\si^{insep}$   gives a monadic extension, then the projection $X \to Y$ has multiplicity $1$ and hence $\mu(\phi) = \mu(Y)$. Therefore, from the geometric or combinatorial point of view, $\phi$ is trivial and information related to monadic field extensions can be ignored. However,  in Definitions \ref{definition: protected system of difference fields} and \ref{definition: regular systems of varieties} we have defined regularity with respect to the effective algebraic closure of difference fields, which means that   our notion of regularity is stronger than what we need in this paper.  The next proposition makes this precise.

\begin{proposition}\label{proposition: geometric regularity}
Let $\Om$ be an irreducible  system of   varieties on $V$ over $A$.  The following are equivalent:
\begin{enumerate}
\item for any system $\S$ of difference fields associated to $\Om$, for every $u \subseteq V$, $\S(u) \cap \prod_{P(u)^-} \S(v)^{alg}$ is a monadic extension of $\S(u)^-$
\item for any $u \subseteq V$,  let $\Om'$ be an irreducible $P(u)^-$-refinement of $\Om \upharpoonright P(u)^-$. Let $\Om^+(u)$ be the fibre product of $\Om(u)$ and $\Om'(u)$ over $\Om(u)^-$. Then $dim(\Om^+(u)) = dim(\Om(u))$. \label{equation: geometric regularity}
\end{enumerate}
 \end{proposition}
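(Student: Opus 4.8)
The plan is to pass from $\Om$ to the systems of difference fields it determines and to reformulate both conditions inside the difference-Galois theory of the composite $\prod_{v\in P(u)^-}\S(v)^{alg}$ over $\prod_{v\in P(u)^-}\S(v)$.

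First I would reduce to the case where $A$ is algebraically closed and inversive. Passing from $A$ to $A^{alg}$ (and $K$ to a larger model) does not change the absolute irreducibility of $\Om$, and both conditions are invariant under this base change: a system of difference fields associated to $\Om$ over $A^{alg}$ is obtained from one over $A$ by the base change of Proposition \ref{proposition: characterisation of measure of difference variety}, while irreducible refinements and fibre products commute with base change. So fix a generic point $a$ of $\Om(V)$, let $\S$ be the associated system of difference fields ($\S(u) = A(a_u)_\si^{insep}$), and for $u\subseteq V$ with $|u|\geq 2$ write $K_0 = \S(u)^-$, $M = \S(u)$ and $E = \prod_{v\in P(u)^-}\S(v)^{alg}$. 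Since the $\S(v)$ are perfect, $E/K_0$ is Galois, $M/K_0$ is finite invariant Galois by Definition \ref{definition: system of difference fields}, and $L := M\cap E$ is a finite Galois $\si$-invariant extension of $K_0$ (both $M$ and $E$ being $\si$-invariant). Condition (1) at $u$ says exactly that $L/K_0$ is monadic.

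Next I would translate (2). By Definitions \ref{definition: systems of varieties} and \ref{definition: refinement of system of varieties}, and in the spirit of the construction in Proposition \ref{proposition: regular refinement of system of varieties}, an irreducible $P(u)^-$-refinement $\Om'$ of $\Om\upharpoonright P(u)^-$ is the same datum as a choice of finite invariant Galois extensions $\S'(v)/\S(v)$ for $v\in P(u)^-$ whose composite $N := \prod_{v\in P(u)^-}\S'(v)$ is regular over $A$; then $\Om'(u)$ has function field $N$, and $\Om^+(u)=\Om(u)\times_{\Om(u)^-}\Om'(u)$ has coordinate ring $M\otimes_{K_0}N$, which, $M/K_0$ being Galois, is isomorphic to $(MN)^{[M\cap N:K_0]}$. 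So (2) is a statement about how the cover $\rho_u\Om(u)$ behaves under the invariant base changes $N$. The crucial point is that the admissible $N$ together exhaust the finite subextensions of $E_0:=\prod_{v\in P(u)^-}\S(v)^e$, and that in fact $L\subseteq E_0$: using the independence of $\{\S(v)\}_{v\in P(u)^-}$ over $A$ and $A=A^{alg}$, one identifies $\mathrm{Gal}(E/K_0)$ with $\prod_{v}\mathrm{Gal}(\S(v)^{alg}/\S(v))$ carrying the product $\si$-action (this is where Lemma \ref{lemma: linear disjointness in systems} enters), and then every $\si$-invariant finite subextension of $E/K_0$ is killed by the intersection of all $\si$-stable open normal subgroups, hence lies in $E_0$. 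Combining this with the description of $\Om^+(u)$ and with the fact, recalled just before the proposition, that a monadic cover has multiplicity one, one sees that the condition in (2) at $u$ holds if and only if $L/K_0$ is monadic, that is, if and only if (1) at $u$ holds.

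The bookkeeping reductions and the tensor-product computation are routine. I expect the main obstacle to be the difference-Galois core of the previous paragraph: computing $\mathrm{Gal}(E/K_0)$ for the composite and deducing $L\subseteq E_0$, and then verifying that monadicity of $L/K_0$ is exactly what makes the geometric condition hold for every admissible refinement, while a failure of monadicity is witnessed by one of them (one takes $N\supseteq L$, for which $M\cap N=L$). This part runs parallel to the argument behind the Generalised Independence Theorem in ACFA and relies on the structure theory of monadic and invariant extensions recalled before the statement.
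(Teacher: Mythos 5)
Your strategy — reduce condition~(2) to a statement about compatibility of difference structures on $M\cap N \subseteq L := \S(u)\cap\prod_{v\in P(u)^-}\S(v)^{alg}$, with all relevant $N$ living inside $E_0 := \prod_{v\in P(u)^-}\S(v)^e$ — captures the right ideas, and the inclusion $L\subseteq E_0$ you rest on is indeed true. But your argument for that inclusion, and hence the core of the proof, has a real gap. The identification $\mathrm{Gal}(E/K_0)\cong\prod_{v\in P(u)^-}\mathrm{Gal}(\S(v)^{alg}/\S(v))$ is false in general: the fields $\S(v)^{alg}$ are heavily nested ($\S(w)^{alg}\subseteq\S(v)^{alg}$ when $w\subseteq v$), so $E$ equals the composite $\prod_{i\in u}\S(u\setminus\{i\})^{alg}$ of only the maximal ones, and even those are far from linearly disjoint over $K_0$ (they share all $\S(v)^{alg}$ for $v$ of smaller size). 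So the product decomposition, and the subsequent ``every $\si$-invariant finite subextension lies in $E_0$'' deduction built on top of it, does not go through as written. The correct justification for $L\subseteq E_0$ is the Claim inside the proof of Proposition~\ref{proposition: existence of regular refinements} (which says $\S(V)^e\cap\prod_{P(V)^-}\S(v)^{alg} = \prod_{P(V)^-}\S(v)^e$, proved inductively from Lemma~\ref{lemma: effective algebraic closure in systems}); you should cite that instead of the Galois-group computation. You also leave unjustified the assertion that the admissible $N$ exhaust the finite invariant subextensions of $E_0$; this is plausible but needs the explicit construction of a refinement with prescribed function field.

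It is also worth noting that the paper's proof avoids the classification step entirely, and this makes it cleaner. For $(1)\Rightarrow(2)$ it takes an arbitrary refinement $\Om'$, embeds its function field as a subfield $f(\S'(V))\subseteq\prod\S(v)^{alg}$, observes $f(\S'(V))\cap\S(V)\subseteq L$, and uses monadicity (and universal compatibility of monadic extensions) to glue the two difference operators into a single $\tau$ on $\S(V)f(\S'(V))$ — no need to know that $L\subseteq E_0$ or to enumerate the possible $N$. For $(2)\Rightarrow(1)$ it argues contrapositively: a second extension $\tau\neq\si$ of $\si|_{\S(V)^-}$ to $L$ yields an explicit refinement $\Om'$ whose function field is incompatible with $\S(V)$, making the fibre product drop in dimension. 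Your direction $(2)\Rightarrow(1)$ (``take $N\supseteq L$ with $M\cap N=L$'') is essentially the same move; your direction $(1)\Rightarrow(2)$ is where the routes diverge, and where your proposal needs repair: either import the Claim from Proposition~\ref{proposition: existence of regular refinements} to secure $L\subseteq E_0$ and make the agreement of the two $\si$-structures on $M\cap N$ explicit, or switch to the paper's direct argument with an arbitrary $\Om'$.
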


\begin{proof}
$(1) \Rightarrow (2)$: We check (2) for $u = V$. Let $\Om'$ and $\Om^+$ be as in the statement. Let $\S$ be a system of difference fields associated to $\Om$ and $\S'$ a system of difference fields associated to $\Om'$. Write $\S''$ for the restriction of $\S'$ to a system associated to $\Om \upharpoonright P(V)^-$.  By irreducibility of $\Om$, there is a difference field isomorphism $f : \S''(V) \to \S(V)^-$. We can extend $f$ to a field homomorphism $f : \S'(V) \to \S(V)^{alg}$. Then $f(\S'(V)) \subseteq \prod \S(v)^{alg}$ so $f(\S'(V))$ is linearly disjoint from $\S(V)$ over $f(\S'(V)) \cap \S(V)$ and $f(\S'(V)) \cap \S(V)/\S(V)^-$ is a monadic extension.

By universal compatibility of monadic extensions and taking a Galois conjugate of $f$ if necessary, we can assume that $f$ is a difference field homomorphism on $f^{-1}(f(\S'(V)) \cap \S(V))$. Then by linear disjointness, we can construct a difference operator $\tau$ on $\S(V)^{alg}$ such that $\tau = \si$ on $\S(V)$ and $\tau = f \circ \si \circ f^{-1}$ on $f(\S'(V))$. Now $(\S(V)f(\S'(V)), \tau)$ is a difference field associated to $\Om^+(V)$ and hence $dim(\Om^+(V)) = dim(\Om(V)) = dim(\Om'(V))$.

$(2) \Rightarrow (1)$: Suppose (1) fails. Let $\S$ be a system of difference fields associated to $\Om$ and assume that $L:= \S(V) \cap \prod_{P(V)^-} \S(v)^{alg}$ is not monadic over $\S(V)^-$. By properties of monadic extensions, there is an extension $\tau$ of $\si \upharpoonright \S(V)^-$ to $L$ such that $(L, \tau)$ is not isomorphic to $(L, \si)$.

 We can find an irreducible  $P(V)^-$-refinement $\Om'$ of $\Om \upharpoonright P(V)^-$  such that for any system $\S'$ associated to $\Om'$, writing $\S''$ for the subfield of $\S'(V)$ corresponding to a generic point of $\Om(V)^-$, $\S'(V)/\S''(V)$ contains a subextension $L'/\S''(V)$ isomorphic to $(L, \tau) / \S(V)^-$. It is clear that the fibre product of $\Om'$ and $\Om$ has dimension smaller than $dim(\Om') = dim(\Om)$, as $\S'(V)$ and $\S(V)$ are incompatible.
\end{proof}

\noindent \textbf{Remarks:} (1) In the remainder of this paper, we will be interested in geometric and combinatorial properties of systems of varieties, so property (\ref{equation: geometric regularity}) in Proposition \ref{proposition: geometric regularity} will be sufficient in all applications of regularity. However, we believe that the notion of regularity given in Definition \ref{definition: protected system of difference fields} and \ref{definition: regular systems of varieties} is more natural, so we will refer to that one for simplicity.

(2) Take $\Om$, $\Om'$, $\Om(u)^-$, $\Om^+(u)$ as in Proposition \ref{proposition: geometric regularity}(\ref{equation: geometric regularity}) and suppose $K \models ACFA$.  Writing $d  = dim(\Om(u)^-)$, we find that if $\Om$   regular, then  $\mu_d(\Om(u)) = \mu_d(\Om^+(u))$ for every $u \subseteq V$. This   property is essentially the notion of  definable \'etale-edge-uniformity which we will study in the next section.

 \medskip

We prove some technical lemmas about regular systems of varieties. The next two lemmas show that systems of difference varieties behave as expected with respect to the definable measure. We fix $V$ a finite set and $I$ an abstract simplicial complex on $V$.

\begin{lemma}\label{lemma: measure of system of varieties is measure of product}
Suppose that $K \models ACFA$ and let $A \leq K$ be perfect and inversive. Let $\Om$ be a regular $I$-system of   varieties on $V$ over $A$ where $V \notin I$. For $i \in V$, write $d_i = dim(\Om(i))$.

Then $\prod(\Om(v), v \in P(V)^-)$ is irreducible, so that $\Om(V) = \prod(\Om(v), v \in P(V)^-)$.
 $\Om(V)$ has dimension $d = \sum_{i \in V} d_i$, and $\mu_d(\Om(V)) = \prod_{i \in V} \mu_{d_i}(\Om(i))$. 
 \end{lemma}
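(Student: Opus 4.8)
The plan is to reduce everything to the two-variable case and to the Fubini property of the definable measure (Theorem~\ref{theorem: definable measure on qf sets}(3)), using the linear disjointness lemmas for regular systems. First I would record what needs to be shown: (a) $\prod(\Om(v), v \in P(V)^-)$ is irreducible over $A$; (b) since $V \notin I$ we have $\Om(V) = \prod(\Om(v), v\in P(V)^-)$ by definition of an $I$-system, so (a) gives irreducibility of $\Om(V)$; (c) $\dim \Om(V) = \sum_{i \in V} d_i$; and (d) $\mu_d(\Om(V)) = \prod_{i \in V}\mu_{d_i}(\Om(i))$.

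For irreducibility, let $\S$ be the system of difference fields associated to $\Om$ (take a generic point $a$ of $\Om(V)$ and set $\S(u) = A(a_u)_\si^{insep}$). Since $A$ is algebraically closed (perfect and inversive, and we may pass to $A^{alg}$ as base without loss, the measure being base-change invariant by Proposition~\ref{proposition: characterisation of measure of difference variety}(1)), irreducibility of $\prod(\Om(v), v\in P(V)^-)$ over $A$ amounts to saying that the composite field $\prod_{v \in P(V)^-}\S(v)$ is a regular field extension of $A$, equivalently that $\prod_{v\in P(V)^-}\S(v)$ is linearly disjoint from $A^{alg}$ over $A$ — but $A = A^{alg}$, so this is automatic once we know the composite is a well-defined field, which holds inside the ambient $K$. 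The substantive point is rather that the fibre product is irreducible \emph{as a variety}, i.e. that no collision of components occurs; this follows from regularity applied inductively: using Lemma~\ref{lemma: regular systems: second linear disjointness lemma} (with $I = P(V)^-$), $\S(V)$ — which here equals $\prod_{v\in P(V)^-}\S(v)$ — is linearly disjoint from $\prod_{v\in P(V)^-}\S(v)^{alg}$ over $\prod_{v\in P(V)^-}\S(v)$, which is vacuous, so instead I would build the fibre product up one subset at a time along a linear extension of $P(V)^-$ and at each stage use Lemma~\ref{lemma: regular systems: first linear disjointness lemma} to see that $\S(u)$ stays linearly disjoint from the algebraic closure of the partial composite over the appropriate subfield, hence the new fibre product stays irreducible.

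For the dimension and measure statements, I would induct on $|V|$. Fix $j \in V$ and let $u = V \setminus \{j\}$. Consider the projection $\pi : \Om(V) \to \prod(\Om(v), v \in P(V)^-, j \notin v)$; the target is (after restricting $\Om$ to the simplicial complex of subsets not containing $j$) exactly a system of varieties of the same shape on $u$, irreducible by the induction hypothesis, of dimension $\sum_{i \in u} d_i$ and measure $\prod_{i\in u}\mu_{d_i}(\Om(i))$. The fibres of $\pi$ over a generic point should be identified, using the regularity/linear-disjointness lemmas (Lemma~\ref{lemma: regular systems: third linear disjointness lemma} to split off the coordinate $j$), with a copy of $\Om(j)$ base-changed to the generic point of the target — here the key is that regularity forces $\S(j)$ to be linearly disjoint from the composite $\prod_{v: j\notin v}\S(v)^{alg}$ over $A$, so $\Om(j)$ remains absolutely irreducible over that generic point and the fibre has dimension $d_j$ and measure $\mu_{d_j}(\Om(j))$ (invoking base-change invariance, Proposition~\ref{proposition: characterisation of measure of difference variety}(1)). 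Then $\dim\Om(V) = \sum_{i\in u}d_i + d_j = \sum_{i\in V}d_i$ and Fubini (Theorem~\ref{theorem: definable measure on qf sets}(3)) gives $\mu_d(\Om(V)) = \mu_{d_j}(\Om(j))\cdot\prod_{i\in u}\mu_{d_i}(\Om(i)) = \prod_{i\in V}\mu_{d_i}(\Om(i))$.

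The main obstacle I anticipate is the careful bookkeeping needed to verify that the generic fibre of $\pi$ is genuinely (birationally) $\Om(j)$ over the generic point of the base, rather than merely having the same dimension: one must check that regularity is exactly the hypothesis that prevents the cover structure in $\Om$ (the splitting fields $\mathcal{P}_u$ for $u$ containing $j$, of which there are none since $V \notin I$ forces $\Om(V)$ to be a plain fibre product — so in fact the only covers live at proper subsets) from reducing the fibre, and that the independence of the family $\{\S\{i\}\mid i \in V\}$ over $A$ (Definition~\ref{definition: system of difference fields}(2)) together with the regularity condition propagates correctly through the composite. Making this precise is where Lemmas~\ref{lemma: regular systems: first linear disjointness lemma}--\ref{lemma: regular systems: third linear disjointness lemma} do the real work, and I would structure the induction so that each of these is applied to the restricted system $\S \upharpoonright (\text{subsets not containing } j)$ at the appropriate step.
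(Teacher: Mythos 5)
Your plan takes a genuinely different inductive route from the paper's, and while the overall spirit is sound, there are several concrete gaps.

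The paper inducts on $|I|$: writing $I = I' \cup \{u\}$ with $u$ maximal, the restriction $\Om' = \Om\upharpoonright I'$ has strictly smaller simplicial complex and still satisfies $V \notin I'$, and $\Om(V)$ differs from $\Om'(V)$ by the single extra factor $\Om(u)$ in the fibre product. Because $\Om(u) \to \Om(u)^-$ is finite and dominant, $\Om(V)$ is a finite cover of $\Om'(V)$; once irreducibility is established, Lemma~\ref{lemma: explicit definable measure on varieties}(\ref{equation: explicit definable measure on varieties}) (finite field extensions preserve $\mu_n$) gives the dimension and measure claims with no Fubini argument. You instead induct on $|V|$, projecting $\pi: \Om(V) \to \Om(V\setminus\{j\})$ and invoking Fubini. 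This runs into three problems.

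First, the generic fibre of $\pi$ is \emph{not} $\Om(j)$ base-changed to the generic point. It is the fibre product $\prod(\Om(v),\, v\in P(V)^-,\, j\in v)$ restricted over $b$, which is a finite cover of $\Om(j)$ whenever any $\Om(v)$ with $j\in v$, $|v|\geq 2$ is a nontrivial cover of $\Om(v)^-$. You note in passing that the only covers live at proper subsets, but that is precisely the case where the fibre is larger than $\Om(j)$. To recover $\dim = d_j$ and measure $\mu_{d_j}(\Om(j))$ for the fibre you would need (a) irreducibility of the fibre over $(A(b)_\si)^{alg}$ and (b) Lemma~\ref{lemma: explicit definable measure on varieties}(\ref{equation: explicit definable measure on varieties}) applied to the finite cover of $\Om(j)$, neither of which the base-change invariance of Proposition~\ref{proposition: characterisation of measure of difference variety}(1) gives you. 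What is needed here is essentially Lemma~\ref{lemma: measures of varieties in cover well behaved}, which the paper proves separately (and which is not available at the point where Lemma~\ref{lemma: measure of system of varieties is measure of product} is proved).

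Second, the induction hypothesis may not apply to the target: the restricted system on $V' = V\setminus\{j\}$ has simplicial complex $I' = I \cap P(V')$, and nothing forbids $V' \in I'$. But the lemma you are proving assumes the top set is \emph{not} in the simplicial complex. So the inductive call is not to a smaller instance of the same statement, and you would need an extra step (pass to $\Om(V')^-$, apply the lemma there, then use the finite-cover invariance for $\Om(V') \to \Om(V')^-$) before the induction closes.

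Third, your first pass at irreducibility (that it amounts to $\prod_{v}\S(v)$ being a regular field extension of $A$) is not correct: irreducibility of a \emph{difference} variety is about the difference field structure on the generic point being well-defined up to isomorphism, not only about the underlying pure composite. You then notice the vacuous application of Lemma~\ref{lemma: regular systems: second linear disjointness lemma} and propose building the composite up one subset at a time — which is in effect the paper's induction on $|I|$, but you invoke Lemma~\ref{lemma: regular systems: first linear disjointness lemma} rather than Lemma~\ref{lemma: regular systems: second linear disjointness lemma} together with Lemma~\ref{lemma: linear disjointness in systems}; the former statement has the wrong shape for adding $\S(u)$ itself to the composite.

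Summary: the idea of reducing to Fubini is reasonable, but it forces you to re-prove irreducibility of generic fibres (a separate lemma in the paper) and creates a hypothesis mismatch in the induction. The paper's induction on $|I|$ avoids both issues by keeping $V \notin I'$ throughout and replacing Fubini with the simpler invariance of $\mu_n$ under finite covers.
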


\begin{proof}
 We argue inductively on $|I|$, the base case being $I = \{\{i \} \mid i \in V\}$, which is trivial. Write $I = I' \cup \{u\}$ where $u$ is a maximal element of $I$, so that $P(u)^- \subseteq I'$. 
 Let $\Om'   = \Om \upharpoonright I'$. Then $\Om'$ is a  regular system.   We   assume that the lemma holds for $\Om'$ and $\Om\upharpoonright P(u)^-$.
 
 We argue that $X: = \prod(\Om(v), v \in P(V)^-)$ is irreducible, which will entail that $\Om(V) = X$. Let $x_0, x_1$ be  generic points of $X$ in $K$. We need to show that $A(x_0)_\si$ is isomorphic to $A(x_1)_\si$ over $A$ as a difference field. Write $x'_0, x'_1$ for the projections of $x_0, x_1$ to $\Om'(V)$. Let $\S_0, \S_1, \S_0', \S_1'$ be the systems of difference fields associated to $x_0, x_1, x_0', x_1'$ respectively. 
 
 By induction hypothesis, $\prod_{P(u)^-} \S_0(v)$ is isomorphic to $\prod_{P(u)^-} \S_1(v)$ over $A$, and so are $\S_0'(V)$ and $\S_1'(V)$. $\Om(u)$ is  irreducible  so it is enough to show that $\S_0(u)$ is linearly disjoint from $\S'_0(V)$ over $\prod_{P(u)^-} \S_0(v)$. 
By Lemma  \ref{lemma: regular systems: second linear disjointness lemma}, $\S_0(u)$ is linearly disjoint from $\prod_{P(u)^-} \S_0(v)^{alg}$ over $\prod_{P(u)^-} \S_0(v)$. Therefore it is enough to show that $\S_0(u) \prod_{P(u)^-} \S_0(v)^{alg}$ is linearly disjoint from $\S_0'(V) \prod_{P(u)^-} \S_0(v)^{alg}$. This is a direct application of Lemma \ref{lemma: linear disjointness in systems}.
 
 Hence $\Om(V) = X$. The statements about $dim(\Om(V))$ and $\mu_d(\Om(V))$ follow inductively by direct applications of Lemma \ref{lemma: explicit definable measure on varieties}(\ref{equation: explicit definable measure on varieties}). 
 \end{proof}

\begin{lemma}\label{lemma: measures of varieties in cover well behaved}
Suppose that $K \models ACFA$ and let $A \leq K$ be perfect and inversive. Let $\Om$ be a regular   $I$-system of     varieties on $V$ over $A$. Fix $u \subseteq V$ and for any $b \in \Om(u)$ write $\Om(V, b)$ for the pullback of $b$ to $\Om(V)$. 
Then for any generic $b \in \Om(u)$, $\Om(V, b)$ is an irreducible variety over $(A(b)_\si)^{alg}$ of dimension $d = dim(\Om(V \setminus u))$ and $\mu_d(\Om(V, b)) = \mu_d(\Om(V \setminus u))$. 
\end{lemma}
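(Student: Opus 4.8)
The plan is to reduce the statement to Lemma~\ref{lemma: measure of system of varieties is measure of product} and the basic properties of the definable measure. First I would fix a generic $b \in \Om(u)$, set $B = (A(b)_\si)^{alg}$, and observe that $\Om(V,b)$ is naturally a system of varieties over $B$: for $w \subseteq V$, the fibre $\Om(w,b)$ over the image $b_{w\cap u}$ of $b$ in $\Om(w \cap u)$ gives, after restricting indices to $V \setminus u$, a candidate $I'$-system $\Om_b$ on $V \setminus u$ over $B$, where $I' = \{w \setminus u \mid w \in I\}$. The content of the lemma is then (a) that $\Om_b$ is a \emph{regular} system over $B$, and (b) that dimensions and measures of the original cover specialise correctly. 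Once regularity of $\Om_b$ is established, Lemma~\ref{lemma: measure of system of varieties is measure of product} applied to $\Om_b$ gives $\dim \Om(V,b) = \sum_{i \in V \setminus u} d_i = \dim \Om(V \setminus u)$ and $\mu_d(\Om(V,b)) = \prod_{i \in V \setminus u} \mu_{d_i}(\Om(i))$, and the same lemma applied to $\Om \upharpoonright P(V \setminus u)$ gives $\mu_d(\Om(V\setminus u)) = \prod_{i \in V \setminus u}\mu_{d_i}(\Om(i))$; comparing the two yields the claimed equality. Irreducibility of $\Om(V,b)$ over $B$ likewise follows from Lemma~\ref{lemma: measure of system of varieties is measure of product} applied to $\Om_b$, once we know $\Om_b$ is regular and irreducible.

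The main work, and the main obstacle, is showing that the system of difference fields associated to $\Om_b$ is regular over $B$. Let $a$ be generic in $\Om(V,b)$ over $B$, so $a$ together with $b$ is generic in $\Om(V)$ over $A$; let $\S$ be the associated system of difference fields for $\Om$ over $A$, with $\S(w) = A(a_w)_\si^{insep}$, and let $\S_u$ denote $\S \upharpoonright P(u)$. The system associated to $\Om_b$ is, roughly, $w \mapsto B\,\S(w) = (A(b)_\si)^{alg}\S(u \cup w)$ for $w \subseteq V \setminus u$ — more precisely the perfect inversive closures thereof. For regularity I must show that for every nonempty $w \subseteq V \setminus u$, the field $B\,\S(u\cup w)$ is linearly disjoint from $\prod_{w' \in P(w)^-} (B\,\S(u \cup w'))^{alg}$ over $\prod_{w' \in P(w)^-} B\,\S(u\cup w')$. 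This is exactly the kind of statement that Lemma~\ref{lemma: linear disjointness in systems} and Lemma~\ref{lemma: regular systems: third linear disjointness lemma} are designed to handle: regularity of $\S$ controls how $\S(u \cup w)$ sits over its lower pieces, while Lemma~\ref{lemma: regular systems: third linear disjointness lemma} (with $I$ the simplicial complex generated by $\{u\} \cup P(w)$) controls the interaction of $\S(u\cup w)$ with the algebraic closures coming from $u$. The argument should go: use Lemma~\ref{lemma: linear disjointness in systems} to absorb the algebraic closures $\prod_{w'}(B\S(u\cup w'))^{alg}$ down to $B\prod_{w'}\S(u\cup w')$ together with $\S(u)^{alg}$-type factors, then use regularity of $\S$ together with the Towers property to conclude, exactly as in the proofs of Lemmas~\ref{lemma: regular systems: first linear disjointness lemma}--\ref{lemma: regular systems: third linear disjointness lemma}. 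One has to be slightly careful that $B = (A(b)_\si)^{alg}$ is an algebraic, not just finite, extension of $\S(u)$ — but since $b$ is generic in $\Om(u)$, $\S(u) = A(b)_\si^{insep}$ and $B = \S(u)^{alg}$, so the relevant statements are literally instances of the linear disjointness lemmas with the base enlarged to include $\S(u)^{alg}$, which those lemmas already accommodate.

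Finally I would record the measure computation explicitly. Since $b$ is generic in $\Om(u)$, Fubini (Theorem~\ref{theorem: definable measure on qf sets}(3)) applied to the projection $\Om(V) \to \Om(u)$, combined with $\mu(\Om(V)) = \mu(\Om(V\setminus u))\,\mu(\Om(u))$ from Lemma~\ref{lemma: measure of system of varieties is measure of product} and $\mu(\Om(V)) = \mu_{d'}(\Om(u))\cdot \mu_d(\Om(V,b))$ for generic $b$, forces $\mu_d(\Om(V,b)) = \mu_d(\Om(V \setminus u))$; this gives an alternative to the direct route through Lemma~\ref{lemma: measure of system of varieties is measure of product} and is a useful cross-check. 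The only subtlety is matching dimensions: $d = \dim \Om(V) - \dim \Om(u) = \sum_{i \in V}d_i - \sum_{i \in u} d_i = \sum_{i \in V \setminus u} d_i = \dim\Om(V\setminus u)$, using Lemma~\ref{lemma: measure of system of varieties is measure of product} for the first and last equalities. I expect the whole argument to be short modulo the regularity verification, which is where all the real content lies.
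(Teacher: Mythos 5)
Your overall plan — package the fibre over $b$ as a system of varieties $\Om_b$ on $V\setminus u$ over $B=(A(b)_\si)^{alg}$, prove it is regular, and then invoke Lemma~\ref{lemma: measure of system of varieties is measure of product} — is a genuinely different route from the paper's, but as written it has a circularity problem that you do not resolve.

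To apply Lemma~\ref{lemma: measure of system of varieties is measure of product} to $\Om_b$ you need $\Om_b$ to be a \emph{regular} system of varieties over $B$, and by Definition~\ref{definition: regular systems of varieties} a regular system is by definition irreducible, i.e.\ $\Om_b(V\setminus u)=\Om(V,b)$ must already be absolutely irreducible over $B$. That is exactly one of the conclusions you are trying to derive. Even at the lower levels the same issue recurs: to exhibit $\Om_b$ as a system of varieties at all, the intermediate fibres $\Om(u\cup w,b)$ for $w\subsetneq V\setminus u$ have to be irreducible of the right dimension, which are again instances of the very lemma you are proving. You flag this yourself by saying irreducibility follows ``once we know $\Om_b$ is regular and irreducible,'' but that last hypothesis \emph{is} the target statement, so the reduction does not close. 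A careful induction on $|V\setminus u|$ might be made to work, but it still would not by itself give the dimension claim for the top fibre: a priori $\dim_B\Om(V,b)$ could drop below $d$ for generic $b$, and nothing in your outline rules that out.

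This is precisely where the paper does something your proposal has no analogue of. To get the dimension, the paper does not try to realize $\Om(V,b)$ as the top of a regular system; instead it uses Lemma~\ref{lemma: regular systems: second linear disjointness lemma} (linear disjointness of $\S(V)$ from $\S(u)^{alg}$ over $\S(u)$) to twist the difference operator on $\S(V)^{alg}$ by $f^{-1}\si f$ on $\S(u)^{alg}$, then invokes model completeness of ACFA and the description of types (Lemma~\ref{lemma: description of types}) to embed the twisted structure into $K$ over $(A(b)_\si)^{alg}$, producing a point of $\Om(V,b)$ of full transcendence degree $d$. Irreducibility is then proved directly, not via Lemma~\ref{lemma: measure of system of varieties is measure of product}: take two generic points $a_1,a_2$ of $\Om(V,b)$ over $B$, note $\S_1(V)\cong_{\S_1(u)}\S_2(V)$ by irreducibility of $\Om(V)$ over $A$, and use regularity (again Lemma~\ref{lemma: regular systems: second linear disjointness lemma}) to propagate this to an isomorphism $B(a_1)_\si\cong_B B(a_2)_\si$. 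The measure statement then falls out of Lemma~\ref{lemma: explicit definable measure on varieties}(\ref{equation: explicit definable measure on varieties}). Your Fubini cross-check at the end has the same problem as the main argument: Fubini requires the generic fibre measure to be constant, which is what irreducibility and the dimension count are needed to establish in the first place.

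If you want to pursue your route, the honest version would be an induction on $|V\setminus u|$ where at each stage you prove irreducibility and dimension of the fibre directly (essentially reproducing the paper's two arguments) and only then record that the resulting $\Om_b$ is a regular system; at that point Lemma~\ref{lemma: measure of system of varieties is measure of product} buys you nothing you did not already have. So the reduction is not really a shortcut: the twist-and-embed step and the two-generic-points argument are unavoidable content, and neither appears in your sketch.
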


\begin{proof}
   First we show that $\Om(V, b)$ has dimension $d$. Let $a$ be a generic point of $\Om(V)$ and let $\S$ be the associated  system of perfect difference fields.   By Lemma  \ref{lemma: regular systems: second linear disjointness lemma}, $\S(V)$ is linearly disjoint from $\S(u)^{alg}$ over $\S(u)$. Since $\Om(u)$ is irreducible, $\S(u)$ is isomorphic to $A(b)_\si^{insep}$ over $A$.

We define a new automorphism $\tau$ of $ \S(V)^{alg} $ as follows. On $\S(V)$, $\tau$ is equal to $\sigma$. Let $f : \S(u)^{alg} \to A(b)_\si^{alg} $ be an isomorphism of pure fields extending the difference fied isomorphism $\S(u) \to A(b)_\si^{insep}$. Then define $\tau$ on $\S(u)^{alg}$ to be $f^{-1} \circ \si \circ f$. 
 Now extend $\tau$ arbitrarily to an automorphism of $\S(V)^{alg}$. 

As $\S(V)$ has finite total dimension over $A$, $\S(V)^{alg}$ is inversive. By model completeness, we can find an embedding of $(\S(V)^{alg}, \tau)$ in our model $K$ of ACFA. By the description of types of  Lemma \ref{lemma: description of types}, we know that the copy of $b$ in $\S(V)^{alg}$ satisfies the same complete type as $b$ over $A$. It follows that we can embed $(\S(V)^{alg}, \tau)$ in $K$ over $(A(b)_\si)^{alg}$ and hence $\Om(V, b)$ has dimension $d$.  

Now let $a_1, a_2$ be generic points of $\Om(V, b)$ and let $\S_1, \S_2$ be the corresponding systems of   difference fields. Note that $\S_1(u) = \S_2(u) = (A(b)_\si)^{insep}$.  Write $B = (A(b)_\si)^{alg}$.  To show that $\Om(V, b)$ is irreducible over $B$, it is enough to show that $B(a_1)_\si \cong_{B} B(a_2)_\si$. By regularity, $\S_i(V)$ is linearly disjoint from $B$ over $\S_i(u)$ for $i = 1, 2$. By irreducibility, we have $\S_1(V) \cong_{\S_1(u)} \S_2(V)$. It follows that we can construct a difference field isomorphism $B(a_1)_\si \to B(a_2)_\si $ over $B$ so $\Om(V, b)$ is irreducible. 

The statement about $\mu_d$ follows from the dimension of $\Om(V, b)$ being $d$, $\Om(V, b)$ being irreducible, and Lemma  \ref{lemma: explicit definable measure on varieties}(\ref{equation: explicit definable measure on varieties}).
\end{proof}

\subsection{Definable \'etale-edge-uniformity and a first hypergraph regularity lemma}

\textit{In this section, we work inside a   model $K $ of ACFA and we fix  $A \leq K$ a finitely generated   perfect  inversive difference subfield. $V$ is a finite set.}

\medskip

In this section, we return to the study of Galois formulas and we show that our lemmas about regular systems of difference fields already prove an interesting form of hypergraph regularity: given a definable hypergraph  $\phi$ with edges indexed by the simplicial complex $P(V)^-$, we find an \'etale  partition of the domain of $\phi$ such that each induced sub-hypergraph is edge-uniform with respect to definable $P(V)^-$-refinements   of   $\phi$. We make our terminology precise in the following definition.

\begin{definition}\label{definition: definable etale edge uniformity}
Let $\Om$ be a system of   varieties on $V$ over $A$. For every $u \subseteq V$, write $d_u = dim(\Om(u))$ and write $\phi_u = \rho_u\Om(u)$ for the projection of $\Om(u)$ to $\Om(u)^-$. 

We say that $\Om$ is definably \'etale-edge-uniform if for every $u \subseteq V$ with $|u| \geq 2$, the following holds:
let $\Om'$ be a $P(u)^-$-refinement of $\Om \upharpoonright P(u)^-$ and write $\pi$ for the projection $\Om'(u) \to \Om(u)^-$. Then

 \[
\frac{\mu_{d_u}(\phi_u \wedge \pi(\Om'(u)) )}{\mu_{d_u}(\phi_u)} = \frac{\mu_{d_u}(\pi(\Om'(u)) )}{\mu_{d_u}(\Om(u)^-)}.
\]
\end{definition}

\noindent \textbf{Remarks:} (1) With the notation above, we emphasise that $\Om'(u)$  is the fibre product $\prod(\Om'(v), v \in P(u)^-)$ and that $\pi(\Om'(u))$ is contained in $\Om(u)^-$. Therefore $\Om'(u)$ lives inside the boundary of $\Om(u)$, modulo moving to an \'etale cover. 

(2) If $\Om$ is an $I$-system of varieties, then it is enough to check the property of definition \ref{definition: definable etale edge uniformity} for $u \in I$.

   \medskip

 The following proposition is an elaboration of the comment following Proposition \ref{proposition: geometric regularity}. 

\begin{proposition}\label{proposition: definable edge uniformity and regular systems}
 Let $\Om$ be  a   system of varieties on $V$ over $A$. Then $\Om$ is definably \'etale-edge-uniform if and only if the irreducible components of $\Om$ satisfy either of the equivalent conditions   in Proposition \ref{proposition: geometric regularity}. In particular, if $\Om$ is regular, then $\Om$ is definably \'etale-edge-uniform.
\end{proposition}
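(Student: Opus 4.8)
The plan is to fix $u \subseteq V$ with $|u| \geq 2$, write $X = \Om(u)^-$ and $d = d_u$ (so $\dim X = \dim\Om(u) = d$ since $\Om(u)\to X$ is dominant), and to show that \'etale-edge-uniformity at $u$, for a given $P(u)^-$-refinement $\Om'$, is equivalent to the single identity $\mu_d(\Om^+(u)) = \mu_d(\Om(u))$. The key structural fact is that the three projections $\rho_u\colon\Om(u)\to X$ (with image $\phi_u := \rho_u\Om(u)$), $\pi\colon\Om'(u)\to X$ (with image $\psi := \pi(\Om'(u))$) and $\Om^+(u)\to X$ are all generically finite: a point of $\Om(u)$ over $a$ is a pair $(a,b)$ generating a finite splitting field over $A(a)_\si$; $\Om'(u) = \prod(\Om'(v),v\in P(u)^-)$ maps to $X = \prod(\Om(v),v\in P(u)^-)$ as the fibre product of the finite maps $\Om'(v)\to\Om(v)$; and the fibre of $\Om^+(u)$ over a point is the product of the fibres of $\rho_u$ and $\pi$. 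Feeding this, together with $\mu_d(\Om'(u)) = \mu_d(\Om(u)) = \mu_d(X)$ from Lemma \ref{lemma: explicit definable measure on varieties}(\ref{equation: explicit definable measure on varieties}), into Fubini (Theorem \ref{theorem: definable measure on qf sets}) — after partitioning $X$, $\phi_u$, $\psi$ and $\Om^+(u)$ into strata on which the relevant fibre cardinalities are constant — the equation $\mu_d(\phi_u\wedge\psi)\mu_d(X) = \mu_d(\phi_u)\mu_d(\psi)$ of Definition \ref{definition: definable etale edge uniformity} rearranges into $\mu_d(\Om^+(u)) = \mu_d(\Om(u))$. This makes precise the comment following Proposition \ref{proposition: geometric regularity}, and reduces the whole statement, for each $u$, to the equivalence of ``$\dim\Om^+(u) = \dim\Om(u)$ for every irreducible $P(u)^-$-refinement'' with ``$\mu_d(\Om^+(u)) = \mu_d(\Om(u))$ for every $P(u)^-$-refinement''.

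By additivity of the definable measure over irreducible components — base-changing $A$ so that $\Om$ splits and using base-change invariance (Proposition \ref{proposition: characterisation of measure of difference variety}(1)) — I may assume $\Om$ is irreducible, and by a routine inclusion–exclusion that $\Om'$ is irreducible too. One direction is then immediate: if $\mu_d(\Om^+(u)) = \mu_d(\Om(u))$ then $\mu_d(\Om^+(u)) > 0$, so $\dim\Om^+(u) = d = \dim\Om(u)$, which is condition (\ref{equation: geometric regularity}) of Proposition \ref{proposition: geometric regularity}, hence also its condition (1). For the converse I assume condition (1): $L_0 := \S(u)\cap\prod_{v\in P(u)^-}\S(v)^{alg}$ is monadic over $\S(u)^- = \prod_{v\in P(u)^-}\S(v)$. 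Replacing $\Om'$ by a further refinement (which can only enlarge $\Om^+(u)$) and passing to Galois closures of the covers $\Om'(v)\to\Om(v)$, I may assume that the difference field $P$ generated over a generic $A(a)_\si^{insep}$ by the $\Om'(v)$-information is a finite Galois extension of $\S(u)^-$. Writing $L = \S(u)$ and $M = LP$, the irreducible components of $\Om^+(u)$ correspond to the extensions of $\si$ from $\S(u)^-$ to endomorphisms of $M$, each contributing measure $\mu_d(X)/[M:\S(u)^-]$ by Lemma \ref{lemma: measures of varieties in cover well behaved} and Lemma \ref{lemma: explicit definable measure on varieties}(\ref{equation: explicit definable measure on varieties}), with $\mu_d(\Om(u))$ described in the same way through $L$. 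Since $L\cap P\subseteq L_0$ carries a unique extension of $\si$ and $L/\S(u)^-$ is Galois, the count of $\si$-extensions to $M$ and the degree $[M:\S(u)^-]$ both factor multiplicatively through $L$ and $P$ — the independence-of-extensions mechanism also behind the Generalised Independence Theorem and Lemma \ref{lemma: linear disjointness in systems} — and $\mu_d(\Om^+(u)) = \mu_d(\Om(u))$ drops out. When $\Om$ is regular, linear disjointness gives $L_0 = \S(u)^-$, so condition (1) holds automatically; this is the final ``in particular'' clause.

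The main obstacle is the measure computation in the second paragraph. The clean quantitative description of $\mu_d(\phi_u)$, $\mu_d(\psi)$ and $\mu_d(\phi_u\wedge\psi)$ as (number of compatible $\si$-extensions)$/$(degree) times the measure of the ambient variety comes from Proposition \ref{proposition: alternative characterisation of measure}, but that proposition applies only once the relevant finite extension has been made regular over a sub-tuple of the base — which in general forces one first to lift $X$ and the whole configuration to an \'etale cover, as explained after Proposition \ref{proposition: alternative characterisation of measure}. Carrying this \'etale bookkeeping through while controlling inseparable degrees, and verifying that the reductions to irreducible $\Om$ and $\Om'$ (an inclusion–exclusion over components and over fibre-cardinality strata) are harmless, is the technical core; the Galois-theoretic counting itself is routine once the setup is in place.
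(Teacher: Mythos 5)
Your overall line of attack matches the paper's: reduce to irreducible $\Om$, pass to a generic point, and deduce the measure identity from a Galois-theoretic count using monadicity and linear disjointness of $\S(u)$ from the boundary algebraic closure, with Proposition~\ref{proposition: alternative characterisation of measure} converting the count into measures. The backward direction (a dimension drop in $\Om^+(u)$ forces $\mu_{d_u}(\phi_u\wedge\pi(\Om'(u)))=0$, contradicting edge-uniformity) is fine and coincides with the paper's.

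The gap is in the forward direction, in how you invoke monadicity. You write that $L\cap P\subseteq L_0$ ``carries a unique extension of $\si$''. That is not what monadicity gives: a monadic finite Galois extension $L_0/\S(u)^-$ of degree $e>1$ admits exactly $e$ extensions of $\si$, forming a single conjugacy orbit under $\mathrm{Gal}(L_0/\S(u)^-)$, since the stabiliser $\{g:g\si=\si g\}$ is trivial. It follows that neither the count of compatible $\si$-extensions to $M=LP$ nor the degree $[M:\S(u)^-]$ factors multiplicatively through $L$ and $P$: the count is $N(\phi_u)N(\psi)/[L\cap P:\S(u)^-]$ (restriction of the $\phi_u$-extensions to $L\cap P$ is $[L\cap P:\S(u)^-]$-to-$\big(N(\phi_u)/[L\cap P:\S(u)^-]\big)$, and similarly for $P$, then glue by linear disjointness), and $[M:\S(u)^-]=[L:\S(u)^-][P:\S(u)^-]/[L\cap P:\S(u)^-]$. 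What the argument actually produces is cancellation of these $[L\cap P:\S(u)^-]$ factors in the \emph{ratio} $\nu=N/[\,\cdot\,]$, not multiplicativity of numerator and denominator separately. As written your argument only establishes the case $L_0=\S(u)^-$, i.e.\ the regular case in the ``in particular'' clause, not the general monadic case that the proposition claims. A related imprecision is the formula ``each contributing measure $\mu_d(X)/[M:\S(u)^-]$'': each irreducible component $Z$ of $\Om^+(u)$ has $\mu_d(Z)=\mu_d(X)$ by Lemma~\ref{lemma: explicit definable measure on varieties}(\ref{equation: explicit definable measure on varieties}), so $\mu_d(\Om^+(u))$ is (number of components)$\cdot\mu_d(X)$; the quantity $N\cdot\mu_d(X)/[M:\S(u)^-]$ is $\mu_d(\phi_u\wedge\pi(\Om'(u)))$, and conflating the two is exactly where multiplicities of the maps $\Om(u)\to\phi_u$ and $\Om'(u)\to\pi(\Om'(u))$ must be reinstated to make the reduction in your first paragraph rigorous.
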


\begin{proof}
We can assume that $\Om$ is irreducible. Suppose that $\Om$ satisfies (1) in Proposition \ref{proposition: geometric regularity}. Let $\phi = \rho_V \Om(V)$ and let $\Om'$ be a $P(V)^-$-refinement of $\Om \upharpoonright  P(V)^-$. Write $\pi$ for the projection $ \Om' \to \Om$. 
We need to check that  
\[
\frac{\mu_{d_V}(\phi \wedge \pi(\Om'(V)))}{\mu_{d_V}(\phi)} = \frac{\mu_{d_V}(\pi(\Om'(V)))}{\mu_{d_V}(\Om(V)^-)}.
\]
 
Let $a$ be a generic point of $\Om(V)$, let $\S$ be the associated system of difference fields.  Let   $L'$ be the Galois extension of $\S(V)^-$ associated  to $\Om'(V)$.  
 In the notation of Proposition \ref{proposition: alternative characterisation of measure}, we only need to check that 
\[
N(\phi \wedge \pi(\Om'(V))) = N(\phi)  N(\pi(\Om'(V))). 
\]
 Since $\S(V) \cap L'/\S(V)^-$ is monadic, $N(\phi)$ is equal to the number of extensions $\tau$ of $\si$ from $\S(V) \cap L'$ to $\S(V)$ such that $\phi$ is satisfied in $(L', \tau)$. Similar equalities hold for $N(\pi(\Om'(V))$ and $N(\phi \wedge \pi(\Om'(V)))$. Now $L'$ and $\S(V)$ are linearly disjoint over $\S(V) \cap L'$, so the result follows by counting extensions of $\si$ in the obvious way.

Conversely, suppose there is a $P(V)^-$-refinement $\Om'$ of $\Om \upharpoonright P(V)^-$ such that (\ref{equation: geometric regularity}) of Proposition \ref{proposition: geometric regularity} fails. Then $\phi \cap \pi(\Om'(V))$ is the projection to $\Om(V)^-$ of the fibre product of $\Om'(V)$ and $\Om(V)$ over $\Om(V)^-$. Since $\mu_{d_V}(\Om'(V) \times_{\Om(V)^-} \Om(V)) = 0$, it is clear that definable \'etale-edge-uniformity fails.  
\end{proof}

We state a first hypergraph regularity lemma at the level of ACFA, in terms of systems of varieties and definable edge-uniformity. We repeat all our notation and background assumptions.

\begin{theorem}\label{theorem: definable etale edge uniformity in ACFA}
Let $V$ be a finite set and $I$ a simplicial complex on $V$. Let $A$ be an inversive difference field and let $\Om $ be an $I$-system of varieties on $V$ over $A$.

  Then there is a surjective   $\partial I$-refinement $\Om'$ of $\Om$ over a finitely generated extension $A'$ of $A$   such that each irreducible component of $\Om'$ is definably \'etale-edge-uniform.
\end{theorem}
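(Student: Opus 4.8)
The plan is to reduce Theorem \ref{theorem: definable etale edge uniformity in ACFA} directly to the results already established about regular systems. First I would invoke Proposition \ref{proposition: regular refinement of system of varieties}: given the $I$-system $\Om$ on $V$ over $A$, this produces a surjective $\partial I$-refinement $\Om'$ over a finitely generated extension $A'$ of $A$ such that every irreducible component of $\Om'$ is a regular system of varieties. This is the only nontrivial construction step, and it is entirely black-boxed by the earlier proposition, which in turn rests on Proposition \ref{proposition: existence of regular refinements} for systems of difference fields (the effective-algebraic-closure bookkeeping in Lemma \ref{lemma: effective algebraic closure in systems} is the real engine, but we do not need to reopen it).

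Next I would observe that the conclusion ``definably \'etale-edge-uniform'' for each irreducible component is immediate from Proposition \ref{proposition: definable edge uniformity and regular systems}, which states precisely that a regular system of varieties is definably \'etale-edge-uniform. Strictly speaking, Proposition \ref{proposition: definable edge uniformity and regular systems} is phrased for an arbitrary system $\Om$ and says $\Om$ is definably \'etale-edge-uniform iff its irreducible components satisfy the conditions of Proposition \ref{proposition: geometric regularity}, with the ``in particular'' clause handling the regular case; since each irreducible component of $\Om'$ is regular, each such component is definably \'etale-edge-uniform. So the theorem is essentially a concatenation: apply Proposition \ref{proposition: regular refinement of system of varieties}, then apply Proposition \ref{proposition: definable edge uniformity and regular systems} componentwise.

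The only point requiring a word of care is the bookkeeping about the simplicial complex and the shape of the refinement. Proposition \ref{proposition: regular refinement of system of varieties} delivers a $\partial I$-refinement, and since $I$ is a simplicial complex on $V$ we have $\partial I = I$, so $\Om'$ is an $I$-refinement, hence an $I$-system on $V$ over $A'$ by the remark following Definition \ref{definition: refinement of system of varieties}. One should also note the (harmless) degenerate case $V \in I$: if $\Om$ is already ``saturated'' at the top the same statements apply verbatim, and the condition in Definition \ref{definition: definable etale edge uniformity} is checked for all $u$ with $|u| \ge 2$ regardless. The surjectivity of the refinement $\Om'(V) \to \Om(V)$ is exactly the ``surjective refinement'' clause delivered by Proposition \ref{proposition: regular refinement of system of varieties}, so no extra argument is needed.

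I do not expect any genuine obstacle here: the theorem is a packaging result, and all the substance has been front-loaded into Propositions \ref{proposition: existence of regular refinements}, \ref{proposition: regular refinement of system of varieties}, \ref{proposition: geometric regularity} and \ref{proposition: definable edge uniformity and regular systems}. If I wanted to make the proof fully self-contained I would spell out why regularity of each component of $\Om'$ transfers through the identifications of pure fields used in the proof of Proposition \ref{proposition: regular refinement of system of varieties} (regularity only depends on the pure-field splitting data of the families $\mathcal{P}_v$, not on the difference structure), but this is already argued there, so the proof of the theorem itself can be a two-line appeal.

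\begin{proof}
By Proposition \ref{proposition: regular refinement of system of varieties}, there is a surjective $\partial I$-refinement $\Om'$ of $\Om$ over a finitely generated extension $A'$ of $A$ such that each irreducible component of $\Om'$ is a regular system of varieties. Since $I$ is a simplicial complex, $\partial I = I$, so $\Om'$ is an $I$-refinement of $\Om$, and by the remark following Definition \ref{definition: refinement of system of varieties}, $\Om'$ is an $I$-system of varieties on $V$ over $A'$. By Proposition \ref{proposition: definable edge uniformity and regular systems}, each regular irreducible component of $\Om'$ is definably \'etale-edge-uniform, which is the desired conclusion.
\end{proof}
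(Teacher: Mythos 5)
Your proof is correct and essentially identical to the paper's: the paper likewise obtains $\Om'$ from Proposition \ref{proposition: regular refinement of system of varieties} and then cites Proposition \ref{proposition: definable edge uniformity and regular systems} for definable \'etale-edge-uniformity of the regular components. The remark that $\partial I = I$ for a simplicial complex is correct and a harmless piece of extra bookkeeping beyond what the paper bothers to spell out.
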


\begin{proof}
We obtain $\Om'$ by Proposition \ref{proposition: regular refinement of system of varieties}. Definable \'etale-edge-uniformity is given by Proposition \ref{proposition: definable edge uniformity and regular systems}. 
\end{proof}

\section{The Stochastic independence theorem,  the stationarity theorem, and   quasirandomness }

\subsection{The Stochastic Independence Theorem}

\emph{We fix  $K \models ACFA$ and $A \leq K$ a perfect inversive subfield. $V$ is a finite set. } 
 
 \medskip

 The next definition sets up some useful notation for moving between \'etale covers. 
  
\begin{definition}
  Let $\Om$ be a system of difference varieties on $V$ over $A$. Let $\phi$ be a definable set contained in   $\Om(u)$ for some $u \subseteq V$.
  \begin{enumerate}
  \item  If $v \subseteq u$ and  $b \in \Om(v)$, then we write $\phi(\Om(u), b)$ for the set of elements $x \in   \phi$ which map to $b$ under the projection $\Om(u) \to \Om(v)$.
  \item When $u \subseteq w \subseteq V$, we write $\phi(\Om(w))$ for the pullback of $\phi$ under the projection $\Om(w) \to \Om(u)$.
  \item When $\Om'$ is a refinement of $\Om$, we also write $\phi(\Om'(u))$ for the pullback of $\phi$ to $\Om'(u)$ by the  projection $\Om'  \to \Om$. 
\end{enumerate}     

\end{definition}

 In the next theorem, an \emph{antichain $J$ on $V$} is a collection of subsets of $V$  of size at least $2$  such that for any  two $u \neq v \in J$, $u$ does not contain $v$. We ask that elements of $J$ have size at least $2$ because it makes the statement of Theorem \ref{theorem: stochastic independence} easier and there will be results later on which break down if we allow for antichains with singletons.

\begin{theorem}[The Stochastic Independence Theorem]\label{theorem: stochastic independence}
Let $J$ be an antichain of $V$ with $\bigcup J  = V$. Let $I$ be the simplicial complex on $V$ generated by $J$ and let $\Om$ be a regular $ I$-system of varieties on $V$ over $A$.  For $u \in J$, write $\phi_u = \rho_u\Om(u)$.

Let $\Om_0 = \Om\upharpoonright \partial J$.  For every $u \subseteq V$, write $d_u = dim(\Om_0(u))$ and  $\Pp_u = \mu_{d_u}/(\mu_{d_u}(\Om_0(u)))$, so that $\Pp_u$ is   a probability measure on $\Om(u)$. 
Then 
\begin{equation}\label{equation: stationarity theorem, 1}
\Pp_V(\bigwedge_{u \in J} \phi_u(\Om_0(V))) = \prod_{u \in J} \Pp_u(\phi_u).
\end{equation}

\end{theorem}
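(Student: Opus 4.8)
The plan is to evaluate both sides of \eqref{equation: stationarity theorem, 1} via the alternative characterisation of the definable measure in Proposition~\ref{proposition: alternative characterisation of measure}, which reduces the theorem to a purely field-theoretic identity about counting extensions of $\sigma$, and then to derive that identity from the linear disjointness properties of regular systems recorded in Lemmas~\ref{lemma: regular systems: second linear disjointness lemma} and \ref{lemma: regular systems: third linear disjointness lemma}. First I would reduce to the case where $A$ is algebraically closed and existentially closed in the (sufficiently saturated) ambient model $K$; this changes neither the varieties $\Om(u)$ nor the Galois formulas $\phi_u$, and leaves each $\mu_n$ fixed by the base-change invariance of Proposition~\ref{proposition: characterisation of measure of difference variety}(1). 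Here $\Om_0 = \Om\upharpoonright\partial J$ is the base system obtained by discarding the top covers $\Om(u)$, $u \in J$, so that $\Om_0(u) = \Om(u)^-$ for $u \in J$ while $\Om_0$ agrees with $\Om$ on proper faces; it is regular, so its top $\Om_0(V)$ is irreducible by Lemma~\ref{lemma: measure of system of varieties is measure of product}. Fix a generic point $\bar a$ of $\Om_0(V)$ and let $\S_0$ be the associated system of difference fields, namely the restriction to $\partial J$ of the regular system $\S$ attached to $\Om$; for $u \in J$ the pure field underlying $\S(u)$ is the finite invariant Galois extension $L_u$ of $\S_0(u) = \S(u)^-$ associated to $\phi_u$. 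The key structural remark is that $\Phi := \bigwedge_{u\in J}\phi_u(\Om_0(V))$ is exactly the image of the projection $\Om(V)\to\Om_0(V)$ coming from $\Om(V) = \prod(\Om(u):u\in J)$: a point of $\Om_0(V)$ lies in $\Phi$ precisely when each of its projections to $\Om(u)^-$ ($u \in J$) lifts to $\Om(u)$, and such lifts are automatically compatible on overlaps. Hence $\Phi$ is itself a Galois formula, with closure $cl_A(\Phi) = \Om_0(V)$ (the projection being dominant by the dimension conditions in the definition of a system of varieties and Lemma~\ref{lemma: measure of system of varieties is measure of product}) and associated field extension the compositum $L := \prod_{u\in J}L_u$ over $\S_0(V)$.

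Applying Proposition~\ref{proposition: alternative characterisation of measure} to $\Phi$ and to each $\phi_u$, always with the trivial partition of the generic point — the regularity hypothesis there is automatic since $A$, being algebraically closed, is perfect, so all the field extensions involved are separable and $A$-regular — one gets $\mu_{d_V}(\Phi) = \nu(\Phi)\,\mu_{d_V}(\Om_0(V))$ and $\mu_{d_u}(\phi_u) = \nu(\phi_u)\,\mu_{d_u}(\Om_0(u))$, with $\nu(\Phi) = N(\Phi)/[L:\S_0(V)]$ and $\nu(\phi_u) = N(\phi_u)/[L_u:\S_0(u)]$. Dividing by the measures of the ambient varieties, \eqref{equation: stationarity theorem, 1} is equivalent to $\nu(\Phi) = \prod_{u\in J}\nu(\phi_u)$, hence to the two identities
\[
[L:\S_0(V)] = \prod_{u\in J}[L_u:\S_0(u)] \qquad\text{and}\qquad N(\Phi) = \prod_{u\in J}N(\phi_u).
\]
The degree identity follows by iterating Lemma~\ref{lemma: regular systems: third linear disjointness lemma}: for fixed $u\in J$, taking as index family the downward closure of $(J\setminus\{u\})\cup\partial J$ (whose trace on $u$ is $P(u)^-$), the lemma shows $\S(u) = L_u$ is linearly disjoint from $\prod_{u'\neq u}L_{u'}\cdot\S_0(V)$ over $\S_0(u)$, hence $L_u\S_0(V)$ is linearly disjoint from $\prod_{u'\neq u}(L_{u'}\S_0(V))$ over $\S_0(V)$; since a family each of whose members is linearly disjoint from the compositum of the rest is a free compositum, $[L:\S_0(V)] = \prod_u[L_u\S_0(V):\S_0(V)] = \prod_u[L_u:\S_0(u)]$, the last equality again by linear disjointness of $L_u$ from $\S_0(V)$ over $\S_0(u)$.

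The step I expect to be the main obstacle is the counting identity $N(\Phi) = \prod_u N(\phi_u)$, i.e.\ that the number of extensions of $\sigma$ witnessing the Galois formulas factors along the free compositum. Because $L$ is the free compositum over $\S_0(V)$ of the invariant Galois extensions $L_u\S_0(V)$, an extension of $\sigma\upharpoonright\S_0(V)$ to an endomorphism $\tau$ of $L$ is the same as an independent choice, for each $u$, of an extension of $\sigma\upharpoonright\S_0(V)$ to $L_u\S_0(V)$, and these restrict in a degree-preserving bijective way to extensions of $\sigma\upharpoonright\S_0(u)$ to $L_u$. Moreover, whether $\phi_u$ holds in $(L,\tau)$ depends only on $\tau\upharpoonright L_u$: it requires precisely that $\tau$ send a generating root-tuple of $\mathcal{P}_u$ to the value forced by the difference equations defining $\Om(u)$, and those equations only involve the $\Om(u)$-coordinates, whose field is $L_u$. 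Hence the extensions witnessing $\Phi$ are exactly the tuples of extensions witnessing the individual $\phi_u$, so $N(\Phi) = \prod_u N(\phi_u)$. The delicate points — the interplay between the difference structure and the pure-field linear disjointness, and the transformal-inseparability and inversiveness bookkeeping, which is the reason the constructions use the perfect hulls $A(\cdot)_\sigma^{insep}$ — are handled exactly as in the proof of Proposition~\ref{proposition: alternative characterisation of measure} and in the argument underlying Lemma~\ref{lemma: linear disjointness in systems}.
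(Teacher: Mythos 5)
Your proposal is correct and takes essentially the same approach as the paper's: both reduce the claim via Proposition~\ref{proposition: alternative characterisation of measure} to the two identities $[L:\S_0(V)] = \prod_u[L_u:\S_0(u)]$ and $N(\Phi) = \prod_u N(\phi_u)$, and both establish these by the same iterated linear-disjointness arguments over the regular system, resting on Lemma~\ref{lemma: regular systems: third linear disjointness lemma}. The only real difference is organizational: the paper proves a ``common extension'' claim first (constructing $\tau$ explicitly from arbitrary $\tau_u$'s and checking agreement with $\si$ on the relevant algebraic closures), whereas you argue the bijection abstractly via the free compositum over $\S_0(V)$; you also make an explicit preliminary reduction to $A$ algebraically closed to justify the trivial-partition use of the measure characterisation, which the paper leaves implicit.
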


\begin{proof}
Let $\S$ be a system of difference fields on $V$ over $A$ associated to $\Om$ and let $\S_0 = \S \upharpoonright  \partial J$. For $u \in J$, we will view $\S(u)$ as a pure field extending $\S_0(u)$ (in fact we could start with $\S_0$ and consider abstract field extensions isomorphism to $\S$). For every $u \in J$, let $\tau_u$ be an arbitrary extension of $\si$ from $\S_0(u)$ to $\S(u)$ such that $\phi_u$ is satisfied in the structure $(\S(u), \tau_u)$.


 
\begin{claim}
The difference operators $\tau_u$    have a common extension $\tau$ to $\S(V)$ and $(\S(V), \tau) \models \bigwedge_{u \in J} \phi_u(\Om_0(V))$
\end{claim}

\begin{proof}[Proof of claim]
 By regularity  and Lemma \ref{lemma: regular systems: third linear disjointness lemma}, we know that $\S(u)$ is linearly disjoint from $\prod_{I} \S(v)^{alg}$ over $  \S_0(u)$. Therefore, we   extend each $\tau_u$  to the field $\S(u) \prod_{I} \S(v)^{alg}$ so that $\tau_u \upharpoonright  \prod_{I} \S(v)^{alg} = \si$. 
 
Fix an enumeration $(u_k)$ of $J$ and suppose we have found a common extension $\tau$ of $\tau_{u_0}, \ldots, \tau_{u_k}$ to $ \prod_{i = 0}^k  \S(u_i) \prod_{v \in I} \S(v)^{alg}$.  By Lemma \ref{lemma: regular systems: third linear disjointness lemma}, $\S(u_{k+1})$ is linearly disjoint from $\prod_{i = 0}^k \S(u_i)^{alg}\prod_{v \in I} \S(v)^{alg}$ over $ \S_0(u_{k+1})$. By the Towers property, $\S(u_{k+1}) \prod_{I} \S(v)^{alg}$ is linearly disjoint from $\prod_{i = 0}^k  \S(u_i) \prod_{I} \S(v)^{alg}$ over $\prod_{I} \S(v)^{alg}$. Moreover, $\tau$ and $\tau_{k+1}$ are both equal to $\si$ on $\prod_{I} \S(v)^{alg}$. This proves that we can find a common extension of the $\tau_u$ to $\prod_{J} \S(v) \prod_{I} \S(v)^{alg}$, and this field contains $\S(V)$.
\end{proof}

 $\bigwedge \phi_u(\Om_0(V))$ is equal to the projection of $\Om(V)$ to $\Om_0(V)$. By Lemma \ref{lemma: measure of system of varieties is measure of product}, $\Om(V)$ has dimension $d_V$, so $\bigwedge \phi_u(\Om_0(V))$ has dimension $d_V$. 
By Proposition \ref{proposition: alternative characterisation of measure} and in the notation of Definition \ref{definition: normalised measure}, it is enough to prove 
\[
\nu(\bigwedge_J \phi_u(\Om_0(V)) = \prod_J \nu(\phi_u).
\]

The claim   proves that  $N(\bigwedge_{u \in J} \phi_u) = \prod_{u \in J} N(\phi_u)$, since our choices of $\tau_u$ were arbitrary. Therefore, we only  need to prove that $[\S(V) : \S_0(V)] = \prod_J [\S(v) : \S_0(v)]$. We have already seen that $\S(u)$ is linearly disjoint from $B:=\prod_{I} \S(v)^{alg}$ over $\S_0(u)$ for every $u \in J$ and that for every $k \geq 0$, $\S(u_{k+1})B$ is linearly disjoint from  $\prod_{i = 0}^k \S(u_i) B$ over $B$, where $(u_k)$ is an enumeration of $J$ as in the proof of the claim. Therefore, working by induction:
\begin{IEEEeqnarray*}{rCl}
[\prod_{i = 0}^{k+1} \S(u_i) B  : B ] & = & [  \prod_{i = 0}^{k+1} \S(u_i) B : \prod_{i = 0}^{k} \S(u_i) B][\prod_{i = 0}^{k} \S(u_i)B:  B] \\
& = & [\S(u_{k+1}) B : B]\prod_{i = 0}^k[\S(u_i)B : B]  =  \prod_{i = 0}^{k+1} [\S(u_i) : \S_0(u_i)]
\end{IEEEeqnarray*}
Therefore $[\prod_{u \in J} \S(u) B  : B ]  = \prod_{u \in J} [\S(u) : \S_0(u)] $. Now 
\[
\prod_{u \in J} [\S(u) : \S_0(u)] = [\prod_{u \in J} \S(u) B  : B ] \leq [\S(V) : \S_0(V)] \leq \prod_{u \in J} [\S(u) : \S_0(u)]
\]
so we deduce $[ \S(V): \S_0(V) ]  = \prod_{u \in J} [\S(u) : \S_0(u)]$. 
 \end{proof}
 
\noindent \textbf{Remarks:} (1) The statement of the stochastic independence theorem  may appear technical at first but we stress that any collection of definable sets $(\phi_u)_{u \in J}$ with  variables suitably indexed by $V$ can always be lifted to a $\partial J$-system $\Om_0$ so that the setting of the stochastic independence theorem holds.

(2) The stochastic independence theorem can be seen as an analogue of the counting lemma usually associated to the Szemer\'edi counting lemma. See \cite{Gowers2006} 3.4 or 4.4 for example.

\medskip

We now prove the stationarity theorem. This is a higher-order and a quantitative version of the stationarity theorem used in \cite{Tao2012}. We show that this theorem follows from the Stochastic Independence Theorem and from additional facts about regular systems of varieties. 

We will not use this theorem in our proof of the algebraic hypergraph regularity lemmas to follow. However, it is striking that it always holds in regular systems and the theorem is of wider interest for model theory and algebraic geometry.

 \begin{corollary}[The Stationarity Theorem]\label{corollary: stationarity theorem}
Let $J$ be an antichain of $V$ with $\bigcup J  = V$. 
Let $I$ be the simplicial complex on $V$ generated by $ J$ and let $\Om$ be a regular $I$-system of varieties on $V$ over $A$.  For $u \in J$ write $\phi_u = \rho_u \Om(u)$. 

Let $\Om_0 = \Om \upharpoonright \partial J$. For every $u \subseteq V$, write $d_u = dim(\Om_0(u))$ and $\Pp_u = \mu_{d_u}/(\mu_{d_u}(\Om_0(u)))$.
%
%

Fix any nonempty $u \subseteq V$ and a generic point $a_u $ in $\Om_0(u)$. For any $v \subseteq u$, write $a_v$ for the projection of $a_u$ to $\Om_0(v)$. Then we have  
\begin{equation}\label{equation: stationarity theorem, 2}
 \Pp_{V \setminus u} (\bigwedge_{v \in J} \phi_v(\Om_0(V), a_u)) = \prod_{v \in J} \Pp_{v \setminus u}(\phi_v(\Om_0(v), a_{u\cap v})).
\end{equation}
 \end{corollary}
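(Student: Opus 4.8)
The plan is to deduce the Stationarity Theorem from the Stochastic Independence Theorem by a relativisation argument: we specialise the generic point $a_u$ of $\Om_0(u)$ and work over the base $A' = (A(a_u)_\si)^{alg}$, reducing statement (\ref{equation: stationarity theorem, 2}) to an instance of (\ref{equation: stationarity theorem, 1}) for a suitable regular system on the smaller vertex set $V \setminus u$. First I would set $A' = (A(a_u)_\si)^{alg}$ and, for each $v \in J$, consider the fibre $\Om_0(v, a_{u \cap v})$ over $a_{u \cap v}$ together with the definable set $\phi_v(\Om_0(v), a_{u\cap v})$ sitting inside it. More generally, for every $w \subseteq V \setminus u$ I would form the fibre of $\Om(w \cup (u \cap \text{relevant coords}))$ over $a_u$ — concretely, define a new system $\Om'$ on $V \setminus u$ over $A'$ by setting $\Om'(w) = \Om_0(w \cup u, a_u)$, the pullback of $a_u$ to $\Om_0(w\cup u)$, projected to the $w$-coordinates; the functoriality and the cover structure are inherited from $\Om$, and the new simplicial complex is $I' = \{w \setminus u : w \in I\}$ with generating antichain $J' = \{v \setminus u : v \in J\}$ (note the members of $J'$ can have size $1$ or even be empty, which is harmless here since the identity becomes trivial on those factors; one restricts attention to $v$ with $v \setminus u \neq \emptyset$).

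The key step is to verify that $\Om'$ is a \emph{regular} system of varieties over $A'$. This is exactly the content of Lemma \ref{lemma: measures of varieties in cover well behaved} applied fibrewise: that lemma already tells us that $\Om(V, a_u)$ is irreducible over $A'$ of the expected dimension $d_{V \setminus u}$ and has the expected measure, so $\Om'$ is irreducible; and regularity of the associated system of difference fields follows because, writing $\S$ for a system associated to $\Om$ and $B = (A(a_u)_\si)^{alg}$, the linear disjointness statements of Lemmas \ref{lemma: regular systems: second linear disjointness lemma} and \ref{lemma: regular systems: third linear disjointness lemma} show that $\S(w)$ is linearly disjoint from $B \prod_{v} \S(v)^{alg}$ over $\S_0(w) B$ for the relevant $w, v$, which is precisely what is needed so that the system of difference fields $w \mapsto B(a_{w \cup u})_\si^{insep}$ associated to $\Om'$ remains regular over $B$. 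Crucially, by Proposition \ref{proposition: characterisation of measure of difference variety}(1) the definable measures $\mu_{d}$ are invariant under base change from $A$ to $A'$, so the probability measures $\Pp'_w$ attached to $\Om'$ over $A'$ agree with the conditional measures $\Pp_{V\setminus u}(\,\cdot \mid a_u)$ appearing on the left of (\ref{equation: stationarity theorem, 2}), and similarly $\Pp'_{v \setminus u}(\phi_v(\Om'(v\setminus u))) = \Pp_{v \setminus u}(\phi_v(\Om_0(v), a_{u \cap v}))$ on the right.

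Once $\Om'$ is known to be a regular $I'$-system of varieties on $V \setminus u$ over $A'$ with $\rho_{v \setminus u}\Om'(v\setminus u)$ equal (as a Galois formula over $A'$) to $\phi_v(\Om_0(v), a_{u\cap v})$, the Stochastic Independence Theorem \ref{theorem: stochastic independence} applied to $\Om'$, $J'$ on $V \setminus u$ gives directly
\[
\Pp'_{V \setminus u}\Bigl(\bigwedge_{v \in J} \phi_v(\Om'(V \setminus u))\Bigr) = \prod_{v \in J} \Pp'_{v \setminus u}\bigl(\phi_v(\Om'(v \setminus u))\bigr),
\]
and unwinding the identifications above turns this into (\ref{equation: stationarity theorem, 2}). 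The main obstacle I expect is the bookkeeping around the \emph{covers}: one must check that the splitting-field data $\mathcal{P}_v$ defining $\Om(v)$ over $\Om(v)^-$ specialises correctly over the generic point $a_u$ — i.e. that the Galois extension associated to $\phi_v(\Om_0(v), a_{u\cap v})$ over $B(a_{v})_\si^{insep}$ really is the base change to $B$ of the extension associated to $\phi_v$, with no drop in degree. This is where regularity is used in an essential way (it is precisely what prevents the cover from degenerating upon specialisation), and it is the analogue of the reliance on base change flagged in the introduction; the argument is close to the one appearing in the proof of Lemma \ref{lemma: finite extensions necessary for phi is all we need} and in the claims inside the proof of Proposition \ref{proposition: alternative characterisation of measure}, so no genuinely new idea is required, only care.
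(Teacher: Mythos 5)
Your approach is genuinely different from the paper's: you propose to base-change to $A' = (A(a_u)_\si)^{alg}$, form the fibered system $\Om'$ on $V\setminus u$, verify $\Om'$ is regular, and invoke the Stochastic Independence Theorem for $\Om'$. The paper instead works entirely over $A$: it uses Fubini and the constancy of fiber measures (Proposition \ref{proposition: alternative characterisation of measure} plus regularity) to rewrite both $\Pp_v(\phi_v(\Om_0(v)))$ and $\Pp_V(\bigwedge_J \phi_v(\Om_0(V)))$ in terms of the desired conditional quantities, and then applies the Stochastic Independence Theorem once, to the \emph{original} system $\Om$ and antichain $J$. Both routes exploit the same structural facts (regularity, Lemma \ref{lemma: measure of system of varieties is measure of product}, the stochastic independence theorem), but the paper's ordering of the steps avoids ever having to construct a new regular system.

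That ordering matters, and it exposes a genuine gap in your reduction. By the paper's convention, the antichain appearing in Theorem \ref{theorem: stochastic independence} must consist of sets of size at least $2$, and systems of varieties (Definition \ref{definition: systems of varieties}) only carry Galois-cover data at sets of size at least $2$. You acknowledge that $v\setminus u$ can be empty and wave that away, but when $|v\setminus u| = 1$ the reduction does not just degenerate harmlessly: in $\Om'$ the singleton $v\setminus u$ would have $\Om'(v\setminus u)^- = \Om'(\emptyset) = 0$, so $\rho_{v\setminus u}\Om'(v\setminus u)$ is trivial and certainly does not encode the proper subset $\phi_v(\Om_0(v), a_{u\cap v})$ that the right-hand side of (\ref{equation: stationarity theorem, 2}) requires. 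You would need either a separate argument for those $v$, or a strengthened version of the Stochastic Independence Theorem allowing singleton cover data, neither of which you supply. A secondary, more repairable gap is the claim that $\Om'$ is regular over $A'$: Lemma \ref{lemma: measures of varieties in cover well behaved} gives irreducibility and the correct measure of the total fibre $\Om(V,a_u)$, but does not by itself give the linear-disjointness conditions of Definition \ref{definition: protected system of difference fields} for the fibered system at every level. This can be recovered from Lemma \ref{lemma: regular systems: third linear disjointness lemma} with the index set $I = \{w'\cup u : w'\subsetneq w\}$ followed by a standard upgrade of linear disjointness along the base change to $B = (A(a_u)_\si)^{alg}$, but as written the step is asserted rather than proved. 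The paper's Fubini argument sidesteps both issues entirely, which is why it is the route taken.
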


\begin{proof}
  Fix $a_u$ generic in $\Om_0(u)$. Write $J' = \{v \in J \mid v \subseteq u\}$. We can assume that for any $v \in J'$, $a_v$ satisfies $\phi_v$ (otherwise both sides of (\ref{equation: stationarity theorem, 2}) are $0$). In this case,  (\ref{equation: stationarity theorem, 2}) becomes 
\[
 \Pp_{V \setminus u} (\bigwedge_{ J \setminus J'} \phi_v(\Om_0(V), a_u)) = \prod_{J \setminus J'} \Pp_{v \setminus u}(\phi_v(\Om_0(v), a_{u\cap v}))).
\]
 
  For every $v \in J \setminus J'$ and generic $b \in \Om_0(u \cap v)$, the measure of $\phi_{v }(\Om_0(u \cap v), b)$ is constant. This follows from Proposition \ref{proposition: alternative characterisation of measure} and regularity. Therefore, we can apply Fubini to get 
\begin{IEEEeqnarray*}{rCl}
\mu_v( \phi_v(\Om_0(V)) & = & \int_{b \in \Om_0(v \cap u)} \mu_{v \setminus u}(\phi_v(\Om(V, b)) d\mu_{v \cap u}\\
& = & \mu_{u \cap v}(\Om_0(u\cap v)) \mu_{v \setminus u}(\phi_v(\Om_0(v), a_{u\cap v})).
\end{IEEEeqnarray*}
By Lemma \ref{lemma: measure of system of varieties is measure of product}, $\mu_V(\Om_0(V)) = \mu_u(\Om_0(u))\mu_{V \setminus u}(\Om_0(V \setminus u))$. Therefore 
\begin{IEEEeqnarray*}{rCl}
\Pp_v(\phi_v(\Om_0(v))) & = & \frac{\mu_{u  \cap v}(\Om_0(u\cap v)) \mu_{v \setminus u}(\Om_0(v \setminus u))}{\mu_v (\Om_0(v)) } \Pp_{v \setminus u} (\phi_v(\Om_0(v), a_{v \cap u}))\\
& = & \Pp_{v \setminus u} (\phi_v(\Om_0(v), a_{u \cap v})).
\end{IEEEeqnarray*}

Similarly, for generic $b \in \Om_0(u)$, $\mu_{V \setminus u} (\bigwedge_{v \in J \setminus J'} \phi_v(\Om_0(V), b))$ is constant. Therefore, for generic $b \in \bigwedge_{ v \in J'} \phi_v(\Om_0(u))$, $\mu_{V \setminus u} (\bigwedge_{J  } \phi_v(\Om_0(V), b))$ is constant and we deduce by Fubini 
\begin{IEEEeqnarray*}{rCl}
\mu_V(\bigwedge_{v \in J}\phi_v(\Om_0(V))) & = & \mu_u( \bigwedge_{v \in J'}\phi_v(\Om_0(u)) \mu_{V \setminus u}(\bigwedge_{v \in J} \phi_v(\Om_0(V), a_u))
\end{IEEEeqnarray*} 
By Lemma \ref{lemma: measure of system of varieties is measure of product}, we have 
\[
\Pp_V(\bigwedge_{ J} \phi_v(\Om_0(V))) = \Pp_u(\bigwedge_{J'}\phi_v(\Om_0(u)))\Pp_{V \setminus u} (\bigwedge_{ J \setminus J'  } \phi_v(\Om_0(V), a_u)).
\]
By the stochastic independence theorem applied to $\Phi$, we conclude:
\begin{IEEEeqnarray*}{rCl}
\Pp_{V \setminus u} (\bigwedge_{J \setminus J'} \phi_v(\Om_0(V), a_u)) & = & \prod_{J \setminus J'}\Pp_v(\phi_v(\Om_0(v))) \\
& = & \prod_{J \setminus J'} \Pp_{v \setminus u}(\phi_v(\Om_0(v), a_{u\cap v})).
\end{IEEEeqnarray*}
 \end{proof}

\subsection{Quasirandomness}

\emph{As before $K$ is a  model of ACFA   and $A \leq K$ is a perfect inversive subfield. $V$ is a finite set.}

\medskip

 In this section, we show that quasirandomness in the sense of \cite{Gowers2006} follows easily from the stochastic independence  theorem. 
\medskip

Let $\Om$ be a  system of   varieties on $V$ over $A$. Definition \ref{definition: doubling} introduces   the doubling of $\Om$, written $D(\Om)$. In the case where $\Om$ is the Cartesian product $\prod_{i \in V} \Om(i)$,  $D(\Om)$ is obtained by taking copies $\Om(i, 0)$ and $\Om(i, 1)$ of each $\Om(i)$. $D(\Om)$ is the hypergraph on $V \times \{0, 1\}$ consisting of those tuples $(x_{i, j})_{i \in V, j = 0, 1}$ such that for every choice function $\iota : V \to \{0, 1\}$, the tuple $(x_{i , \iota(i)})$ is in $\Om(V)$. When $\Om$ is a system of varieties, $D(\Om)$ takes the form of a fibre product construction.
 Definition  \ref{definition: doubling} also  defines $D(\Om, u)$, the  doubling of $\Om$ outside a designated subset $u$  of $V$. This will be useful in the next section.

In Definition \ref{definition: doubling}, if $u$ is a set, we write $2^u$ for the set of functions $u \to [2]$ where $[2] = \{0, 1\}$.

\begin{definition}\label{definition: doubling}
Let $\Om$ be an $I$-system of   varieties on $V$ over $A$. 

\begin{enumerate}
\item Let $u \subseteq V$ and $\iota \in 2^u$. We identify the pair $(u, \iota)$ with the subset of $V \times [2]$ given by $\{(i, \iota(i)) \mid i\in u\}$.
\item Define $D(I)$, the doubling of $I$, to be the collection of subsets of $V \times [2]$ of the form $(u, \iota)$ where $u \in I$ and $\iota \in 2^u$. We view $I$ as a partial order with the obvious inclusions. Note that $D(I)$ is a simplicial complex on $V \times [2]$.
\item For every $(u, \iota) \in D(I)$, define $\Om(u, \iota)$ to be a copy of the variety $\Om(u)$. When $(v, \iota') \subseteq (u , \iota)$, we have a natural projection $\Om(u, \iota) \to \Om(v, \iota')$
\item Define $D(\Om)$, the doubling of $\Om$, to be the $D(I)$-system of varieties on $V \times [2]$ taking $(u, \iota)$ to $\Om(u, \iota)$. 
\item To make notation lighter, we usually write $D(\Om)$ for the variety in $D(\Om)$ associated to $V \times [2]$ (instead of writing $D(\Om)(V \times [2])$). 
\end{enumerate}

\end{definition}

\noindent \textbf{Remark:} Let $\Om$ be a regular   $I$-system of   varieties on $V$ over $A$.   Lemma  \ref{lemma: measure of system of varieties is measure of product} generalises easily to prove that $D(\Om)$ is  irreducible and hence $D(\Om)$ is regular.

\medskip

If $\phi $ is a definable set contained in $\Om(u)$ and $\iota \in 2^{u}$, write $\phi(\Om(u, \iota))$ for the definable set contained in $\Om(u, \iota)$ obtained by carrying over $\phi$ to $\Om(u, \iota)$. As a set in $K$, $\phi(\Om(u, \iota))$ is equal to $ \phi(\Om(u))$ but it is useful to distinguish them in notation. If we were to rephrase this in terms of formulas with free variables, $\phi(\Om(u, \iota))$ would be the result of substituting the free variables of $\Om(u, \iota)$ for the free variables of $\Om(u)$ in $\phi $. 

We make here some   comments which clarify the technical background of Definition \ref{definition: quasirandom}. In that definition, we consider a system of varieties $\Om$ with $dim(\Om(V))= d$, the definable set $\phi = \rho_V\Om(V)$ and the function $f$  on $\Om(V)^-$  defined by $ \I_{\phi} - \frac{\mu_d(\phi)}{\mu_d(\Om(V)^-)} \I_{\Om(V)^-}$. $f$ is definable, in the sense that it can be viewed as a   function from a type space over $K$ in the appropriate variables to the interval $[-1, 1]$ which is continuous with respect to the logic topology.   Since the measure $\mu_d$ extends to a Borel measure on the types over $K$ contained in $\Om(V)^-$, we can integrate $f$ on $\Om(V)^-$. When $K$ is $\omega_1$-saturated,   it is possible to move away from the type space and integrate $f$ directly over the set $\Om(V)^-$ in $K$, with respect to the $\si$-algebra of $K$-definable sets.
In any case,  the identity $\int_{D(\Om(V)^-)} f d\mu_d = 0$ expresses  an identity which does not depend on any model. 

When $f$ is a definable function on $\Om(V)^-$ and $\iota \in 2^V$, we will write $f^\iota$ for the result of carrying $f$ over to $\Om(V, \iota)^-$. Thus, for $f$ defined above, $f^\iota = \I_{\phi(V, \iota)} - \frac{\mu_d(\phi)}{\mu_d(\Om(V)^-)} \I_{\Om(V, \iota)^-}$. We also write $f^\iota$ for the result of pullback back from $\Om(V, \iota)$ to $D(\Om)$ when the context is clear.  Therefore we can make sense of the integral $\int_{D(\Om)} \prod_{\iota \in 2^V} f^\iota d\mu_{2d}$.

\medskip
Definition \ref{definition: quasirandom} follows Definition 6.3 in \cite{Gowers2006}.

\begin{definition}\label{definition: quasirandom}
Let $\Om$ be a system of   varieties on $V$ over $A$. We say that $\Om$ is quasirandom if the following holds:

For every $ u \subseteq V$ with $|u| \geq 2$, write $d_u = dim(\Om(u))$, let $\phi_u = \rho_u\Om(u)$ and define the function $f_u : \Om(u)^- \to [-1, 1]$ by $f_u = \I_{\phi_u} - \frac{\mu_{d_u}(\phi_u)}{\mu_{d_u}(\Om(u)^-} \I_{\Om(u)^-}$.  Then 
\[
\int_{D(\Om \upharpoonright P(u)^-)} \prod_{\iota \in 2^u} f_u^\iota d\mu_{2d_u} = 0.
\]
\end{definition}

 The following proposition is essential for the next section, but it follows easily from the stochastic independence theorem. 
  
\begin{proposition}\label{proposition: quasirandomness}
Let $\Om$ be a regular system of varieties on $V$ over $A$. Then $\Om$ is quasirandom.
\end{proposition}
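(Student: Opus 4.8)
The plan is to deduce quasirandomness from the Stochastic Independence Theorem \ref{theorem: stochastic independence} via Gowers' dictionary between box-type integrals and probabilistic independence. Fix $u\subseteq V$ with $|u|\ge 2$. The restriction $\Om\upharpoonright P(u)$ of $\Om$ to the simplicial complex $P(u)$ on $u$ is again a regular system of varieties on $u$ (at the level of the associated systems of difference fields this is immediate from Lemma \ref{lemma: regular systems: third linear disjointness lemma}), and the instance of the defining identity for $u$ in $\Om$ is the instance for ``$V$'' in $\Om\upharpoonright P(u)$; so we may assume $u=V$, and we may assume $\rho_V\Om(V)\neq\Om(V)^-$ since otherwise the integrand is identically zero. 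Write $d=\dim\Om(V)=\dim\Om(V)^-$, $\phi=\rho_V\Om(V)\subseteq\Om(V)^-$, $c=\mu_d(\phi)/\mu_d(\Om(V)^-)\in[0,1]$, $\Om_0=\Om\upharpoonright P(V)^-$, and let $D$ be the top variety of the doubled system $D(\Om)$, which is irreducible of dimension $2d$ (the Remark after Definition \ref{definition: doubling}, together with Lemma \ref{lemma: measure of system of varieties is measure of product}). For each $\iota\in 2^V$ put $\phi^\iota=\rho_{(V,\iota)}D(\Om)(V,\iota)$, a copy of $\phi$, pulled back to $D$; then $f^\iota=\I_{\phi^\iota}-c$ on $D$, and since the projection $D\to D(\Om_0)$ onto the corresponding top variety of $D(\Om_0)$ is a generically finite dominant map of irreducible varieties of the same dimension $2d$ (Lemma \ref{lemma: explicit definable measure on varieties}(\ref{equation: explicit definable measure on varieties})), pulling the integrand back gives $\int_{D(\Om_0)}\prod_\iota f^\iota\,d\mu_{2d}=\int_{D}\prod_\iota(\I_{\phi^\iota}-c)\,d\mu_{2d}$. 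So it suffices to show the latter vanishes.

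The heart of the matter is that, with respect to the probability measure $\Pp=\mu_{2d}/\mu_{2d}(D)$ on $D$, the family $(\phi^\iota)_{\iota\in 2^V}$ is a mutually independent family of events each of probability $c$. To see this, fix $W\subseteq 2^V$ with $|W|\ge 2$ (the cases $|W|\le 1$ are trivial), set $V_W=\bigcup_{\iota\in W}\{(i,\iota(i))\mid i\in V\}\subseteq V\times[2]$, and let $J_W=\{(V,\iota)\mid\iota\in W\}$, an antichain of $V_W$ with $\bigcup J_W=V_W$ whose members all have size $|V|\ge 2$. Let $I_W$ be the simplicial complex it generates. Since $D(\Om)$ is regular, its restriction $\Theta_W=D(\Om)\upharpoonright I_W$ is a regular $I_W$-system on $V_W$ with $\rho_{(V,\iota)}\Theta_W(V,\iota)=\phi^\iota$, so Theorem \ref{theorem: stochastic independence} applied to $\Theta_W$ and $J_W$ shows that the normalised probability of the conjunction $\bigwedge_{\iota\in W}\phi^\iota$ inside the relevant variety of $\Theta_W$ equals $\prod_{\iota\in W}\Pp_W(\phi^\iota)$, where $\Pp_W$ denotes the normalised measures of $\Theta_W$. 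One transfers this identity to $(D,\Pp)$ using Fubini together with Lemma \ref{lemma: measures of varieties in cover well behaved}: the coordinates of $V\times[2]$ lying outside $V_W$, and the $(V,\iota)$-covers with $\iota\notin W$, contribute fibre factors that are irreducible of constant dimension and measure, hence cancel in the relevant ratios. This yields $\Pp(\bigcap_{\iota\in W}\phi^\iota)=\prod_{\iota\in W}\Pp(\phi^\iota)$, and the same argument with $W=\{\iota\}$ (or a direct Fubini over $D\to D(\Om)(V,\iota)^-$) gives $\Pp(\phi^\iota)=\mu_d(\phi)/\mu_d(\Om(V)^-)=c$.

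Granting this, regard $(\I_{\phi^\iota})_{\iota\in 2^V}$ as mutually independent $\{0,1\}$-valued random variables of mean $c$ on $(D,\Pp)$; then
\[
\int_{D}\prod_{\iota\in 2^V}\big(\I_{\phi^\iota}-c\big)\,d\mu_{2d}=\mu_{2d}(D)\prod_{\iota\in 2^V}\big(\Pp(\phi^\iota)-c\big)=0,
\]
every factor being $c-c=0$ (equivalently, expand $\prod_\iota(\I_{\phi^\iota}-c)=\sum_{W\subseteq 2^V}(-c)^{2^{|V|}-|W|}\I_{\bigcap_{\iota\in W}\phi^\iota}$, integrate, and use $\Pp(\bigcap_{\iota\in W}\phi^\iota)=c^{|W|}$ to get $\mu_{2d}(D)\,c^{2^{|V|}}(1-1)^{2^{|V|}}=0$, since $2^{|V|}\ge 1$). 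I expect the only genuinely delicate point to be not conceptual — the content is entirely packaged in Theorem \ref{theorem: stochastic independence} — but the bookkeeping in the middle paragraph: building the doubled systems, selecting the correct sub-antichains $J_W$, checking that restrictions of regular systems remain regular, and reconciling the normalised measures attached to $\Theta_W$ over the sub-support $V_W$ with those on the full doubled boundary system over $V\times[2]$.
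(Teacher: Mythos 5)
Your overall strategy is the same as the paper's — use the Stochastic Independence Theorem to deduce probabilistic independence of the $f^\iota$, and then conclude the box integral vanishes by the usual expansion. But there is a genuine error in the first paragraph, where you move the integral from $D(\Om_0)(V\times[2])$ to $D=D(\Om)(V\times[2])$.

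The sets $\phi^\iota$ must live on the \emph{boundary} variety $D(\Om_0)(V\times[2])$, not on the top variety $D$. Indeed, a point of $D$ carries an $\Om(V,\iota)$-coordinate which, under $\rho_{(V,\iota)}$, lands in $\phi=\rho_V\Om(V)$; so the pullback of $\phi$ along $D\to\Om(V,\iota)^-$ is \emph{all} of $D$. Hence on $D$ the integrand $\prod_\iota(\I_{\phi^\iota}-c)$ is the constant $(1-c)^{2^{|V|}}$, and
\[
\int_D \prod_{\iota\in 2^V}\bigl(\I_{\phi^\iota}-c\bigr)\,d\mu_{2d}\;=\;(1-c)^{2^{|V|}}\,\mu_{2d}(D),
\]
which is not zero unless $c=1$. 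This also shows that your claimed equality $\int_{D(\Om_0)}\prod_\iota f^\iota\,d\mu_{2d}=\int_D\prod_\iota(\I_{\phi^\iota}-c)\,d\mu_{2d}$ cannot hold: Lemma \ref{lemma: explicit definable measure on varieties}(\ref{equation: explicit definable measure on varieties}) gives $\mu_{2d}(D)=\mu_{2d}(D(\Om_0)(V\times[2]))$, but says nothing about integrals of non-constant functions pulled back along a finite dominant map whose fibres over $K_q$-points are not uniformly nonempty. For the same reason your later claim $\Pp(\phi^\iota)=c$ on $D$ is false — on $D$ it is $1$.

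The fix is small: drop the transfer to $D$ and carry out the entire argument on $D(\Om_0)(V\times[2])$, which is exactly what the paper does. There the Fubini computation does give $\Pp(\phi^\iota)=c$, and your second-paragraph scheme (restricting $D(\Om)$ to the simplicial complex $I_W$ generated by the sub-antichain $J_W=\{(V,\iota):\iota\in W\}$ on $V_W$, applying Theorem \ref{theorem: stochastic independence}, and transferring back to $D(\Om_0)(V\times[2])$ using Lemma \ref{lemma: measures of varieties in cover well behaved} and Fubini) establishes $\Pp(\bigcap_{\iota\in W}\phi^\iota)=c^{|W|}$. The paper's own proof is simply the terse version of this: it asserts directly that $\{f^\iota\}$ is an independent family on $D(\Om\upharpoonright P(V)^-)$ and factors the integral, which is precisely what your expansion over $W\subseteq 2^V$ makes explicit. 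With the ambient variety corrected, your write-up would coincide in substance with the paper's argument.
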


\begin{proof}
 We check quasirandomness at level $V$. 
Suppose $dim(\Om(V)) = d$.   Write $\Pp_d = \mu_d/(\mu_d(\Om(V))$ and $\Pp_{2d}  = \mu_{2d}/(\mu_d(\Om(V))^2$. Let $\phi = \rho_V\Om(V)$.

Let $f  =\I_{\phi} - \frac{\mu_d(\phi )}{\mu_d(\Om(V)^-)}\I_{\Om(V)^-}$ on $\Om(V)^-$. By the stochastic independence theorem, the set of functions $\{f^\iota \mid \iota \in 2^V\}$ on $D(\Om \upharpoonright P(V)^-)$ is   independent in the probabilistic sense. Therefore  
\[
\int_{D(\Om \upharpoonright P(V)^-)} \prod_{\iota \in 2^V} f^\iota d\Pp_{2d} = \prod_{\iota \in 2^V} \int_{\Om(V, \iota)^-} f^\iota d\Pp_d = 0.
\]
\end{proof}

In Section \ref{subsection: equivalent notions}, we will see that if $\Om$ is quasirandom, then $\Om$ is definably \'etale-edge-uniform, and hence $\Om$ is regular. This proof goes via the fundamental equivalence of Gowers discussed in the next section. It would be interesting to find a proof of this result at the algebraic level.

 \section{A combinatorial approach to algebraic hypergraph regularity}  \label{section: combinatorics}
  
  In this section, we leave models of ACFA behind and we  apply our theorems to derive combinatorial results concerning finite definable sets in the difference fields $K_q$ where $q$ is a power of a prime $p$, $K_q$ is the algebraic closure of the field containing $p$ elements and the difference operator on $K_q$ is the $q$-th power $ x \mapsto x^q$.

  \subsection{Combinatorial notions and asymptotics}\label{subsection: combinatorial notions}

\emph{We fix $A$ a perfect inversive finitely generated difference field. $V$ is a finite set.}

\medskip
  We recall the notion of Frobenius specialisation from \cite{HrushovskiFrob}, section 12. Suppose that we have some finite definable data $D$ defined over a finitely generated  perfect inversive difference field $A$  (e.g. $D$ is a variety, a definable set, or a system of varieties).  We will say that some property $P$ holds of $D$ for \emph{almost all $q$} to mean the following:  there is a sufficiently large finitely generated  difference domain $R$ in $A$ such that $D$ is defined over $R$ and for   any sufficiently large prime power $q$ and for any difference ring homomorphism $h : R \to K_q$, $P$ holds of $D^h$ in $K_q$, where  $D^h$ is the definable data in $K_q$ obtained by applying $h$ to the parameters in $D$ and interpreting $\si$ as the $q$-th power. 
  
  Equivalently, if $D$ is some data definable over a difference domain $R$, we say $P$ holds of $D$ for almost all $q$ if there is $c \in R$ such that for any sufficiently large prime power $q$ and any homomorphism $h : R \to K_q$ with $h(c) \neq 0$, $P$ holds of $D^h$ in $K_q$. See \cite{HrushovskiFrob} for an in-depth discussion of Frobenius specialisations.   
   
In an effort to make notation lighter, we always consider data $D$ over an inversive difference field $A$ and we suppress the reference to the difference domain $R$ and to the homomorphism $h$  when discussing specialisations. It will always be clear from context whether we are working with $D$ over $A$ or if we are working with specialisations.

The fundamental theorem about Frobenius specialisations is  the twisted Lang-Weil estimates due to Hrushovski:

\begin{theorem}[\cite{HrushovskiFrob}\footnote{While the manuscript we reference is dated from 2022, the twisted Lang-Weil estimates were discovered by Hrushovski in the early 2000s.}]\label{theorem: twisted lang-weil}
Let $X$ be a variety over $A$ and suppose that $X$ has finite total dimension $d$. Then  for almost all $q$, specialising $X$ to $K_q$, we have
\[
|X(K_q) - \mu_d(X)q^d|  = O(q^{d-1/2}).
\]
where $O(\cdot)$ depends only on the degree of $X$.
\end{theorem}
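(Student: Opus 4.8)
Since Theorem \ref{theorem: twisted lang-weil} is quoted from \cite{HrushovskiFrob}, I only sketch the shape of the argument. The plan is to realise $|X(K_q)|$ as a Frobenius-twisted fixed-point count for an algebraic correspondence of dimension $d$, and then to apply the Grothendieck--Lefschetz--Verdier trace formula together with Deligne's weight estimates.

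\emph{Reduction to the absolutely irreducible case.} First I would decompose $X$ into irreducible components over $A^{alg}$. These components are permuted by $\mathrm{Gal}(A^{alg}/A)$, and after Frobenius specialisation this action is composed with $\mathrm{Frob}_q$; for almost all $q$ the set $X(K_q)$ is the union of the $K_q$-points of those components whose orbit is fixed by the composed action, while components of total dimension $< d$ contribute $O(q^{d-1})$. After passing to a finite extension of $A$ I may therefore assume $X$ absolutely irreducible of total dimension $d$, hence of transformal dimension $0$. Since $\si(a)$ is then algebraic over $A(a)$ and, $A$ being inversive, $a$ is interalgebraic with a bounded set of transforms of $\si(a)$, a standard change of coordinates $a \mapsto (a, \si a, \dots, \si^m a)$ reduces to the case where $X$ has order $\le 1$: it is cut out inside $Z \times Z^{(q)}$ — with $Z$ the Zariski closure of a generic point (of dimension $d$) and $Z^{(q)}$ its Frobenius twist — by an algebraic subvariety $W$ whose two projections $p : W \to Z$ and $p' : W \to Z^{(q)}$ are dominant and generically finite, together with the equation $y = x^{(q)}$.

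\emph{The trace formula.} Thus $X(K_q)$ is the scheme of points of $Z$ fixed by the Frobenius-twist of the correspondence $W$. I would first shrink $Z$ to the open locus on which $p$ is finite étale; the complement is a difference subvariety of strictly smaller total dimension, treated by induction on $d$ and absorbed into $O(q^{d-1})$. On the good locus, the Lefschetz--Verdier formula in its ``contracting'' form (Fujiwara, Deligne--Lusztig — applicable because Frobenius is contracting near its fixed points, so the local terms are the naive multiplicities) gives
\[
|X(K_q)| = \sum_{i=0}^{2d} (-1)^i \, \mathrm{Tr}( \mathrm{Frob}_q^{*} \circ [W] \mid H^i_c(Z_{K_q}, \Qq_\ell) ) + O(q^{d-1}),
\]
where $[W] = p'_{*}\, p^{*}$ is the operator of the correspondence. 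On $H^{2d}_c(Z_{K_q}, \Qq_\ell) \cong \Qq_\ell(-d)$ the operator $[W]$ acts by a scalar equal to a ratio of (in)separable degrees of $p$ and $p'$; a direct computation identifies this scalar with $\mu_d(X)$ via the formula for the measure in Proposition \ref{proposition: characterisation of measure of difference variety}, and since $\mathrm{Frob}_q^{*}$ acts on $H^{2d}_c$ by $q^d$, the top term equals $\mu_d(X) q^d$. For $i < 2d$, Deligne's weight bounds force the eigenvalues of $\mathrm{Frob}_q^{*}$ on $H^i_c$ to have absolute value $\le q^{i/2} \le q^{d-1/2}$, while the operator norm of $[W]$ and the $\ell$-adic Betti numbers of $Z$ are bounded in terms of $\deg X$ alone; so these terms are $O(q^{d-1/2})$, and the displayed formula collapses to the assertion.

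\emph{The main obstacle.} The analytic input — Weil II and the boundedness of Betti numbers — is a black box. The genuine work is, on the one hand, to check that all the reductions (decomposition into components and its compatibility with Frobenius specialisation, the passage to order $\le 1$, and the stratification, as well as the fact that $Z$ need not be defined over $\mathbb{F}_q$ itself) are uniform in $q$; and, on the other hand, to carry out the identification of the $H^{2d}_c$-eigenvalue of $[W]$ with the algebraically defined $\mu_d(X)$, matching a cohomological degree with a limit-degree formula — delicate in positive characteristic, where the inseparable degrees $[L:\si(L)]_{insep}$ appearing in Proposition \ref{proposition: characterisation of measure of difference variety} must be tracked through the purely inseparable parts of $p$ and $p'$.
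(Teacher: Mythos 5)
The paper quotes Theorem~\ref{theorem: twisted lang-weil} directly from \cite{HrushovskiFrob} and offers no proof of it, so there is no internal argument to compare against. What you have written is a plausible high-level sketch of a cohomological proof, but you should be aware that it is \emph{not} the route taken in \cite{HrushovskiFrob}. Hrushovski's original proof deliberately avoids \'etale cohomology and Deligne's Weil~II: the heart of that argument is intersection-theoretic, bounding the intersection number of a correspondence with the graph of Frobenius (after a long reduction through difference schemes, definable groups and generic automorphisms) by Hodge-index-type inequalities on surfaces, rather than by weight bounds on $H^i_c$. This independence from the Weil conjectures is one of the striking features of that paper and is the reason it is usually regarded as a genuinely new proof rather than a corollary of Weil~II. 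The trace-formula route you sketch — Lefschetz--Verdier plus the Deligne--Fujiwara contracting-correspondence theorem to get naive local terms, then Weil~II for the error — is essentially Varshavsky's later proof (and the Shuddhodan--Varshavsky refinements), which is shorter but imports the full \'etale-cohomological machinery as a black box. Both are correct; they simply make different trade-offs.

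Within your own sketch, the points you flag at the end are exactly the delicate ones, and I would add one caution: the reduction to order $\le 1$ via $a \mapsto (a,\si a,\dots,\si^m a)$ changes the Zariski closure $Z$, and the degree bounds that control the Betti numbers and the operator norm of $[W]$ must be tracked through this change, uniformly in the specialisation. Likewise, the claim that $[W]$ acts on $H^{2d}_c$ by the quantity of Proposition~\ref{proposition: characterisation of measure of difference variety}(3) is plausible but is not a formality: the separable degree of $p'$ and the inseparable degree of $p$ enter asymmetrically, and one must check that the purely inseparable parts of the correspondence really do produce the factor $[L:\si(L)]_{insep}$ in the denominator rather than cancelling. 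None of this is a gap in the sense of a wrong step — it is the expected technical work — but it is worth recording that the proof you are sketching is Varshavsky's, not the one in the cited source.
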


Theorem \ref{theorem: twisted lang-weil} extends to Galois formulas over $A$ in the obvious way. Since ACFA is the asymptotic theory of the difference fields $K_q$, Theorem \ref{theorem: twisted lang-weil} gives asymptotics for all definable sets of finite total dimension.

  \medskip
  
We now extend the notions of quasirandomness and edge-uniformity introduced in the previous section so that we can apply them inside the structures $K_q$.

\begin{definition}\label{definition: e-quasirandom}
let $\Om$ be a  system of   varieties on $V$ over $A$. For every $u \subseteq V$ with $|u| \geq 2$, write  $\phi_u = \rho_u\Om(u)$ and let $\e_u$ be a function $\Nn \to [0, \infty)$. Write $\e = (\e_u)_{|u| \geq 2}$. 

We say that $\Om$ is $\e$-quasirandom if for almost all $q$ and all $u \subseteq V$ with $|u| \geq 2$, specialising  $\Om$ to $K_q$ and writing $f_u = \I_{\phi_u} - \frac{|\phi_u|}{|\Om(u)^-|} \I_{\Om(u)^-}$, we have 

\[
\sum_{D(\Om\upharpoonright P(u)^-)} \prod_{\iota \in 2^{u}} f_u^\iota = O(\e_u(q))|\Om(u)^-|^2
\] 
where $O(\cdot)$ only depends on the degrees of the varieties in $\Om$. 
 
 \end{definition}

 In the next definition, we introduce chains which are a purely combinatorial notion, since they exist in finite sets inside $K_q$ and we are not interested in identifying them as definable sets. This terminology comes from \cite{Gowers2006}.
 
\begin{definition}\label{definition: chains and edge uniformity}
Let $\Om$ be a system of   varieties on $V$ over $A$. Specialise $\Om$ to some $K_q$. 
 
 \begin{enumerate}
  
  \item    A chain $W = (W(v))_{v\in P(V)}$   in $\Om$ is a collection of sets  such that for all $v \subseteq V$, $W(v) \subseteq \Om(v)$ and for any $v \subseteq u$, $W(v)$ contains the image of $W(u)$ under the projection $\Om(u) \to \Om(v)$.

 \item  Let $W$ be a chain   in $\Om$. Let $I$ be a simplicial complex on $V$. We say that $W$ is an $I$-chain if for all $u \notin I$, $W(u)$ is the fibre product of the sets $(W(v))_{v \in P(u)^-}$. 
 
 \item Let $W$ be a chain in $\Om$. For every $u \subseteq V$, write $W(u)^-$ for the fibre product $\prod(W(v), v \in P(u)^-)$ and $\rho_uW(u)$ for the projection $W(u) \to W(u)^-$.

\item   Let $I$ be a simplicial complex on $V$. An  $I$-chain decomposition of $\Om$ is a collection  $(\mathcal{W}_v)_{v \in I}$ such that each $\mathcal{W}_v$ is a partition of $\Om(v)$ and for every $u \in I$ and every $X \in \mathcal{W}_u$, there is a $P(u)^-$-chain $W$ such that $X \subseteq W(u)$ and for every $v\in P(u)^-$, $W(v) \in \mathcal{W}_v$. 

We say that an $I$-chain $W$ is contained in the $I$-chain decomposition $(\mathcal{W}_v)_{v \in I}$ if for all $v \in I$, $W(v) \in \mathcal{W}_v$. 
\end{enumerate}
\end{definition}

 The next definition makes precise the notion of $\e$-edge-uniformity. We also define $\e$-\'etale edge-uniformity in order to relate  $\e$-edge-uniformity to definable edge-uniformity, but we will  soon see that $\e$-\'etale edge-uniformity is equivalent to $\e$-edge-uniformity. The notion of definable $\e$-\'etale-edge-uniformity will be used only in Corollary \ref{corollary: asymptotic quasirandomness}.

\begin{definition}\label{definition: combinatorial edge uniformity}
Let $\Om$ be a system of   varieties on $V$ over $A$. For every $ u\subseteq V$ with $|u| \geq 2$, let $\phi_u = \rho_u\Om(u)$ and let $\e_u$ be a function $\Nn \to [0, \infty)$. Write $\e = (\e_u)_{|u| \geq 2}$. 

\begin{enumerate}
\item  We say that $\Om$ is $\e$-edge-uniform  if for almost all $q$, specialising $\Om$ and $\phi$ to $K_q$, for all $u \subseteq V$ with $|u| \geq 2$, if $W$ is  $P(u)^-$-chain contained in $\Om \upharpoonright P(u)^-$, then 
\[
|\frac{|\phi_u \cap W(u)|}{|\phi_u|} - \frac{|W(u)|}{|\Om(u)^-|}|  = O(\e_u(q))
\]
where $O(\cdot)$ only depends on the degrees of the varieties in $\Om$.

\item We say that $\Om$ is definably $\e$-\'etale-edge-uniform if for all $ u\subseteq V$ with $|u| \geq 2$, for any $P(u)^-$-refinement $\Om'$ of $\Om \upharpoonright P(u)^-$ over a finitely generated algebraic extension  $A'$ of $A$ with projection $\pi : \Om'(u) \to \Om(u)^-$, for almost all $q$, we have 
\[
|\frac{|\phi_u \cap \pi(\Om'(u))|}{|\phi_u|} - \frac{|\pi(\Om'(u))|}{|\Om(u)^-|}| = O(\e_u(q))
\]
where $O(\cdot)$ only depends on the degrees of the varieties in $\Om$.  

\item We say that $\Om$ is $\e$-\'etale-edge-uniform if for all $ u\subseteq V$ with $|u| \geq 2$, for any $P(u)^-$-refinement $\Om'$ of $\Om \upharpoonright P(u)^-$ over a finitely generated algebraic extension $A'$ of $A$ with projection $\pi : \Om'(u) \to \Om(u)^-$, for almost all $q$ and for any $P(u)^-$-chain $W$ in $\Om'$, we have 
\[
|\frac{|\phi_u \cap \pi(W(u))|}{|\phi_u|} - \frac{|\pi(W(u))|}{|\Om(u)^-|}| = O(\e_u(q))
\]
where $O(\cdot)$ only depends on the degrees of the varieties in $\Om$.  
 
\end{enumerate}
\end{definition}

The following corollary is just a reformulation of Theorem \ref{theorem: definable etale edge uniformity in ACFA} and a direct application of Theorem \ref{theorem: twisted lang-weil}.

\begin{corollary}\label{corollary: asymptotic quasirandomness}

Let $\Om$ be a system of   varieties on $V$ over $A$. For every $u \subseteq V$ with $|u| \geq 2$, set $\e_u(q) = q^{-1/2}$ and let $\e = (\e_u)_{|u| \geq 2}$.

Then there is a surjective   $P(V)^-$-refinement $\Om'$ of $\Om$ over some finitely generated   extension $A'$ of $A$ with regular components $\Om_1, \ldots, \Om_N$ such that for every $i$, $\Om_i$ is definably $\e$-\'etale-edge-uniform and $\e$-quasirandom.
$N$ depends only on the degrees of the varieties in $\Om$.
 \end{corollary}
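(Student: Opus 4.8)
The plan is to build $\Om'$ from the ACFA-level results already in hand and then push everything down to $K_q$ via the twisted Lang--Weil estimates. First I would invoke Theorem~\ref{theorem: definable etale edge uniformity in ACFA} (equivalently Proposition~\ref{proposition: regular refinement of system of varieties}) to obtain a surjective $P(V)^-$-refinement $\Om'$ of $\Om$ over a finitely generated extension $A'$ of $A$ whose irreducible components $\Om_1,\dots,\Om_N$ are regular. The bound on $N$ is a degree count: the construction of a regular refinement in Proposition~\ref{proposition: existence of regular refinements} only passes to effective algebraic closures and to splitting fields of bounded families of polynomials, so the degrees of the varieties in $\Om'$ are bounded in terms of those of $\Om$; since the number of irreducible components of a variety is at most its degree, $N$ is bounded in terms of the degrees of $\Om$. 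By Proposition~\ref{proposition: definable edge uniformity and regular systems} each $\Om_i$ is definably \'etale-edge-uniform in the sense of ACFA (with $\mu$-normalised ratios), and by Proposition~\ref{proposition: quasirandomness} each $\Om_i$ is quasirandom in the sense of ACFA (the relevant integrals vanish). What remains is to specialise these two facts to $K_q$.

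For definable $\e$-\'etale-edge-uniformity I would fix $u\subseteq V$ with $|u|\ge 2$, a $P(u)^-$-refinement $\Om''$ of $\Om_i\upharpoonright P(u)^-$ over a finitely generated algebraic extension of $A'$, and write $\pi$ for the projection $\Om''(u)\to\Om_i(u)^-$ and $d=d_u=dim(\Om_i(u))$. The sets $\phi_u=\rho_u\Om_i(u)$, $\phi_u\wedge\pi(\Om''(u))$, $\pi(\Om''(u))$ and $\Om_i(u)^-$ are all Galois formulas, so Theorem~\ref{theorem: twisted lang-weil} gives, for almost all $q$, that the $K_q$-cardinality of each equals its $\mu$-measure times $q^{d}$ up to an error $O(q^{d-1/2})$, the implied constant depending only on degrees. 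Since the projection defining $\phi_u$ is dominant, $\mu_{d}(\phi_u)>0$, so $|\phi_u(K_q)|\gg q^{d}$ for large $q$, and hence each of the two ratios
\[
\frac{|\phi_u\cap\pi(\Om''(u))|}{|\phi_u|},\qquad \frac{|\pi(\Om''(u))|}{|\Om_i(u)^-|}
\]
differs from its $\mu$-analogue by $O(q^{-1/2})$. The two $\mu$-analogues are equal by the ACFA-level identity of Proposition~\ref{proposition: definable edge uniformity and regular systems}, so the displayed ratios agree up to $O(q^{-1/2})$, which is exactly definable $\e$-\'etale-edge-uniformity for $\e_u(q)=q^{-1/2}$.

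For $\e$-quasirandomness I would again fix $u$, abbreviate $d=d_u$ and $\lambda=\mu_{d}(\phi_u)/\mu_{d}(\Om_i(u)^-)$, and set $\lambda_q=|\phi_u|/|\Om_i(u)^-|$, so that $\lambda_q=\lambda+O(q^{-1/2})$ by the previous paragraph. Writing $\tilde f_u=\I_{\phi_u}-\lambda\I_{\Om_i(u)^-}$ for the ACFA-level function and $f_u=\I_{\phi_u}-\lambda_q\I_{\Om_i(u)^-}$ for its $K_q$-level counterpart, each $f_u^\iota$ differs from $\tilde f_u^\iota$ by $(\lambda-\lambda_q)$ times an indicator function, with $|\lambda-\lambda_q|=O(q^{-1/2})$. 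Expanding $\prod_{\iota\in 2^u}f_u^\iota$ and using that all the functions involved are bounded by $1$ on $D(\Om_i\upharpoonright P(u)^-)$, whose $K_q$-cardinality is $O(q^{2d})$, yields
\[
\sum_{D(\Om_i\upharpoonright P(u)^-)}\prod_{\iota\in 2^u}f_u^\iota=\sum_{D(\Om_i\upharpoonright P(u)^-)}\prod_{\iota\in 2^u}\tilde f_u^\iota+O(q^{2d-1/2}).
\]
Then I would expand $\prod_\iota\tilde f_u^\iota$ as a sum of at most $2^{2^{|u|}}$ terms of the shape $\pm\lambda^{k}\I_X$, where $X$ is a Galois formula arising as a fibre product of copies of $\phi_u$ and $\Om_i(u)^-$ inside $D(\Om_i\upharpoonright P(u)^-)$, and apply Theorem~\ref{theorem: twisted lang-weil} to each $X$ (together with $0\le\lambda\le1$) to get $\sum_D\prod_\iota\tilde f_u^\iota=q^{2d}\int_{D(\Om_i\upharpoonright P(u)^-)}\prod_\iota\tilde f_u^\iota\,d\mu_{2d}+O(q^{2d-1/2})$. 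The integral vanishes by ACFA-level quasirandomness of $\Om_i$ (Proposition~\ref{proposition: quasirandomness}), so the whole sum is $O(q^{2d-1/2})$; and since $|\Om_i(u)^-|^2=\mu_{d}(\Om_i(u)^-)^2q^{2d}(1+O(q^{-1/2}))$ with $\mu_{d}(\Om_i(u)^-)>0$ a fixed rational, this is $O(q^{-1/2})\,|\Om_i(u)^-|^2$, i.e.\ $\e$-quasirandomness, with a constant depending only on degrees (both the number of expansion terms and the Lang--Weil constants do).

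The substantive inputs are Theorems~\ref{theorem: definable etale edge uniformity in ACFA} and~\ref{theorem: twisted lang-weil} together with Proposition~\ref{proposition: quasirandomness}; the only real subtlety in the transfer, and the part I expect to need the most care, is the bookkeeping needed to absorb the mismatch between the $\mu$-normalisation used to define quasirandomness at the level of ACFA and the counting-measure normalisation used over $K_q$, and to check that all $O(\cdot)$ constants can be taken to depend only on the degrees of the varieties in $\Om$ (and on the fixed set $V$).
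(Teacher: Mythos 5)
Your proposal is correct and takes the same route as the paper: the paper's own justification is the single remark that the corollary is a reformulation of Theorem~\ref{theorem: definable etale edge uniformity in ACFA} together with a direct application of Theorem~\ref{theorem: twisted lang-weil}, and your argument simply fills in the Lang--Weil bookkeeping (normalisation by the counting measure versus $\mu_d$, expansion of $\prod_\iota f^\iota$ into Galois-formula indicators, and the degree bound on $N$) that the paper leaves implicit.
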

 \subsection{Equivalences between defined notions}\label{subsection: equivalent notions}
 
\emph{We fix $A$ a perfect inversive finitely generated difference field. $V$ is a finite set.} 
 \medskip

We first show that the notions of $\e$-edge-uniformity and $\e$-\'etale-edge-uniformity coincide. We state the lemma in the language of systems of varieties but this is really a purely combinatorial result. We use the following technical lemma:

\begin{lemma}\label{lemma: sections}
Let $I$ be a simplicial complex on $V$. Let $\Om$ be a system of varieties on $V$ over $A$ and let $\Om'$ be an $I$- refinement of $\Om$ with projection $\pi : \Om' \to \Om$. For every $u \subseteq V$, write $d_u = dim(\Om(u))$. 

Then there is some $N$ such that for almost all $q$, specialising $\Om$ and $\Om'$ to $K_q$, the following holds: there is an $I$-chain decomposition $(\mathcal{W}_v)_{v \in I}$ of $\Om'$ containing at most $N$ chains such that for every $u \in I$ and every set $X \in \mathcal{W}_u$, the map $\pi : X \to \Om(u)$ is injective.
\end{lemma}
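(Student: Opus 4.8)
The plan is to build the $I$-chain decomposition of $\Om'$ directly and combinatorially over a fixed $K_q$ (with $q$ large enough that all the varieties and projections in $\Om,\Om'$ specialise well), processing the sets $v\in I$ in order of increasing cardinality. The key observation is that the pieces of the partitions $\mathcal{W}_v$ are allowed to be arbitrary subsets of the \emph{finite} set $\Om'(v)(K_q)$ rather than definable sets, so there is no obstruction to choosing non‑canonical sections of the projections; the only real work is to make the section choices at level $v$ fit together with the pieces already chosen on the $w\subsetneq v$ into genuine chains.

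The one piece of algebraic input is a uniform bound on fibres. Each projection $\pi_v\colon\Om'(v)\to\Om(v)$ is a finite dominant morphism, so its set‑theoretic fibres have size at most some integer $e_v$ depending only on the degrees of the varieties in $\Om$ and $\Om'$; by Frobenius specialisation this persists, so for almost all $q$ the map $\pi_v\colon\Om'(v)(K_q)\to\Om(v)(K_q)$ has all fibres of size $\le e_v$ (if one prefers, stratify $\Om(v)$ into finitely many locally closed pieces over each of which $\pi_v$ is finite of constant fibre cardinality; this is standard constructibility and it specialises). Consequently, any $C\subseteq\Om'(v)(K_q)$ can be split into at most $e_v$ subsets each of which $\pi_v$ maps injectively into $\Om(v)(K_q)$: enumerate $\pi_v^{-1}(x)\cap C$ for each $x\in\Om(v)(K_q)$ and take the level sets of the resulting rank function.

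Now I induct. Put $\mathcal{W}_\emptyset=\{\Om'(\emptyset)\}$. Let $v\in I$ with $|v|\ge 1$; since $I$ is downward closed, $P(v)^-\subseteq I$, so $\mathcal{W}_w$ is defined for all $w\in P(v)^-$, and I carry the inductive hypothesis that the family is \emph{coherent}: for all $w\subseteq w'$ in $P(v)^-$, the projection $\Om'(w')\to\Om'(w)$ carries every piece of $\mathcal{W}_{w'}$ into a single piece of $\mathcal{W}_w$. Let $\mathcal{C}_v$ be the common refinement on $\Om'(v)(K_q)$ of the pullbacks of the partitions $\mathcal{W}_w$, $w\in P(v)^-$, along the projections $\Om'(v)\to\Om'(w)$; coherence gives $|\mathcal{C}_v|\le\prod_{i\in v}|\mathcal{W}_{v\setminus\{i\}}|$. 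Split each piece of $\mathcal{C}_v$ into at most $e_v$ sections of $\pi_v$ as above and let $\mathcal{W}_v$ be the resulting common refinement. Then $\mathcal{W}_v$ is a partition of $\Om'(v)(K_q)$, each of its pieces is mapped injectively into $\Om(v)$ by $\pi$, it refines $\mathcal{C}_v$ so coherence is preserved at level $v$, and $|\mathcal{W}_v|\le e_v\prod_{i\in v}|\mathcal{W}_{v\setminus\{i\}}|$, a bound independent of $q$.

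It remains to exhibit the chains. Fix $v\in I$ and a piece $X\in\mathcal{W}_v$; for each $w\in P(v)^-$ let $W(w)\in\mathcal{W}_w$ be the unique piece containing the image of $X$ in $\Om'(w)$ (unique because $\mathcal{W}_v$ refines $\mathcal{C}_v$), and let $W(v)$ be the set of $y\in\Om'(v)$ whose image in $\Om'(w)$ lies in $W(w)$ for every $w\in P(v)^-$. Then $X\subseteq W(v)$, the $W(w)$ are chain‑compatible (if $w\subseteq w'$, coherence sends the image of $W(w')$ in $\Om'(w)$ into a single piece of $\mathcal{W}_w$, which must be $W(w)$ since both contain the image of $X$), so $W$ is a $P(v)^-$‑chain contained in $(\mathcal{W}_w)_{w\in I}$. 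Hence $(\mathcal{W}_v)_{v\in I}$ is an $I$‑chain decomposition of $\Om'$ with the required injectivity, and the number of chains it contains is at most $\sum_{v\in I}\prod_{w\in P(v)^-}|\mathcal{W}_w|$, a bound depending only on the fixed data; take $N$ to be this bound. I expect the main obstacle to be exactly this coherence bookkeeping in the induction: choosing sections level by level \emph{without} carrying the coherence hypothesis on all of $(\mathcal{W}_w)_{w\subsetneq v}$ produces pieces that need not lie over a single chain through the lower levels, so the hypothesis must be strengthened along the induction; the uniform‑in‑$q$ fibre bound (where ``almost all $q$'' enters) and everything else is routine.
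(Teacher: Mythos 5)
Your proof is correct and the overall strategy matches the paper's: process $v\in I$ by increasing cardinality, pass to the common refinement of the pullbacks of the lower-level partitions (this is the same, set-theoretically, as the paper's family $\{\rho_v^{-1}(W_i(v))\}$ indexed by the lower-level chains $W_i$), then split each piece into boundedly many sections and carry the chain-compatibility condition through the induction. The one mechanical divergence is in how the sections and the injectivity onto $\Om(u)$ are obtained. You split each piece of $\mathcal{C}_v$ directly into $\pi_v$-injective subsets, invoking a uniform fibre bound for $\pi_v:\Om'(v)\to\Om(v)$. The paper instead takes sections of $\rho_u:\Om'(u)\to\Om'(u)^-$ over each $W_i(u)$ (bounded by the multiplicity of $\rho_u$), and then deduces $\pi$-injectivity from the commutative square $\Om'(u)\to\Om(u)\to\Om(u)^-$, $\Om'(u)\to\Om'(u)^-\to\Om(u)^-$ together with the inductive injectivity of the boundary pieces $W_i(v)\to\Om(v)$. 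Both routes are sound and both need a fibre bound surviving Frobenius specialisation (you for $\pi_v$, the paper for $\rho_u$). The paper's choice has the mild structural extra that each piece of $\mathcal{W}_u$ is a full section over the corresponding chain's boundary fibre product $W_i(u)$, but this extra is not needed for the downstream use in Lemma \ref{lemma: edge uniform same as etale-edge-uniform}, which only exploits the $\pi$-injectivity you establish directly.
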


\begin{proof}
 
We construct $(\mathcal{W}_v)_I$ inductively. The base case where $I$ is the set of singletons of $V$ is clear. Suppose that $I = I' \cup \{u\}$ where $P(u)^- \subseteq  I'$ and $(\mathcal{W}_v)_{I'}$ have been constructed. 

Writing $W_1, \ldots, W_n$ for the $I'$-chains contained in $(\mathcal{W}_v)_{I'}$, we find that for every $i \leq n$, the projection $W_i(u) \to \Om(u)^-$ is injective and that the sets $W_1(u), \ldots, W_n(u)$ partition $\Om'(u)^-$. Now take $\mathcal{W}_u$ to be the partition of $\Om'(u)$ obtained by taking sections of each projection $\rho_u : \rho_u^{-1}(W_i(u)) \to W_i(u)$. The size of $\mathcal{W}(u)$ depends only on $n$ and the multiplicity of the projection $\rho_u $. 

Take $X \in \mathcal{W}_u$. To see that $\pi : X \to \Om(u)$ is injective, note that the maps $\Om'(u) \to \Om(u) \to \Om(u)^-$ and $\Om'(u) \to \Om'(u)^- \to \Om(u)^-$ form a commutative diagram. We know that $X \to  \Om'(u)^- \to \Om(u)^-$ is injective, so the result follows.
\end{proof}

\begin{lemma}\label{lemma: edge uniform same as etale-edge-uniform}
Let $\Om$ be a  system of difference varieties on $V$ over $A$.  
For $u \subseteq V$ with $|u| \geq 2$,  let $\e_u : \Nn \to [0, \infty)$ and write $\e = (\e_u)$. 
Then $\Om$ is $\e$-\'etale-edge-uniform  if and only if $\Om$ is $\e$-edge-uniform.
\end{lemma}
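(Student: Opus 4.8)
The plan is to prove the two implications separately: one direction is immediate, while the converse is the real content and will rely on the sectioning Lemma \ref{lemma: sections} together with an inclusion--exclusion argument.

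First, for $\e$-\'etale-edge-uniform $\Rightarrow$ $\e$-edge-uniform: for each $u\subseteq V$ with $|u|\geq 2$ I would apply the definition of $\e$-\'etale-edge-uniformity to the \emph{trivial} refinement $\Om' = \Om\upharpoonright P(u)^-$ of $\Om\upharpoonright P(u)^-$ (over $A' = A$, with $\pi$ the identity). For a $P(u)^-$-chain $W$ in $\Om\upharpoonright P(u)^-$ one has $\pi(W(u)) = W(u)$, so the defining inequality of $\e$-\'etale-edge-uniformity becomes verbatim that of $\e$-edge-uniformity. One only needs to check that the trivial refinement satisfies Definition \ref{definition: refinement of system of varieties} with $I = P(u)^-$, which is clear.

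For the converse, suppose $\Om$ is $\e$-edge-uniform; fix $u$ with $|u|\geq 2$ and a $P(u)^-$-refinement $\Om'$ of $M := \Om\upharpoonright P(u)^-$ with projection $\pi : \Om'(u)\to\Om(u)^-$. Since $\Om'$ is a $P(u)^-$-refinement, $\Om'(u) = \prod(\Om'(v),v\in P(u)^-)$ and $\pi$ is the map induced on fibre products by the projections $\Om'(v)\to\Om(v)$. The plan is to apply Lemma \ref{lemma: sections} to $\Om'$ as a $P(u)^-$-refinement of $M$, obtaining for almost all $q$ a $P(u)^-$-chain decomposition $(\mathcal{W}_v)_{v\in P(u)^-}$ of $\Om'$ with a bounded number $N$ of cells at each level on which $\pi$ is injective; let $W_1,\dots,W_n$ ($n\leq N$) be the $P(u)^-$-chains it contains, so that $W_i(u) = \prod(W_i(v),v\in P(u)^-)$ and $\{W_i(u)\}_i$ partitions $\Om'(u)$. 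Given any $P(u)^-$-chain $W'$ in $\Om'$, intersecting with this partition and using that fibre products commute with intersection yields $W'(u) = \bigsqcup_i Y_i$ with $Y_i = \prod(W_i(v)\cap W'(v),v\in P(u)^-)$. Because $\pi$ is injective on each $W_i(v)$ and $W_i,W'$ are both chains in $\Om'$, the unique $\pi$-preimages in $W_i(v)$ of a compatible tuple over the $\pi(W_i(v)\cap W'(v))$ are forced to be compatible over $\Om'$; hence $\pi(Y_i) = \prod_M(\pi(W_i(v)\cap W'(v)),v\in P(u)^-)$ is the top level of a genuine $P(u)^-$-chain in $\Om\upharpoonright P(u)^-$, and likewise $\bigcap_{i\in S}\pi(Y_i) =: \widehat W_S(u)$ is the top of such a chain $\widehat W_S$ for every $\emptyset\neq S\subseteq\{1,\dots,n\}$, again since fibre products commute with intersection.

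Finally, since $\pi(W'(u)) = \bigcup_i\pi(Y_i)$, inclusion--exclusion (with at most $2^N$ terms) writes both $|\pi(W'(u))|$ and $|\phi_u\cap\pi(W'(u))|$ as alternating sums of $|\widehat W_S(u)|$ and $|\phi_u\cap\widehat W_S(u)|$; applying $\e$-edge-uniformity of $\Om$ at level $u$ to each chain $\widehat W_S$ and summing gives $\bigl||\phi_u\cap\pi(W'(u))|/|\phi_u| - |\pi(W'(u))|/|\Om(u)^-|\bigr| = O(\e_u(q))$ for almost all $q$, with implied constant depending only on the degrees of the varieties in $\Om$ and $\Om'$. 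The main obstacle is precisely that the projected pieces $\pi(Y_i)$ need not be pairwise disjoint even though the $Y_i$ are, and that $\pi(W'(u))$ is in general much smaller than the fibre product of the $\pi(W'(v))$; the inclusion--exclusion, combined with the stability of "chain sets" under intersection and with the injectivity supplied by Lemma \ref{lemma: sections}, is exactly what resolves this. The remaining points — coherence of the decomposition produced by Lemma \ref{lemma: sections} and the routine ``almost all $q$'' bookkeeping — are straightforward.
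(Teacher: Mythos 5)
Your proof is correct and follows the same overall strategy as the paper: handle the easy direction by taking the trivial refinement, and for the converse, use Lemma \ref{lemma: sections} to chop $\Om'$ into chains $W_i$ on which $\pi$ is injective, intersect with the given chain, and push the pieces forward to chains in $\Om\upharpoonright P(u)^-$. Your verification that $\pi(Y_i)$ is a fibre product (using that compatibility of the unique $\pi$-preimages is forced) matches the paper's corresponding assertion, and you correctly set the whole thing up at a general level $u$ rather than only at $V$.

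Where you go beyond the paper's written proof is in handling the overlap of the images. The paper's argument ends with ``Now $\e$-edge-uniformity applied to each chain $Z_i$ gives the result,'' after noting $\pi(W(V)) = \bigcup_i Z_i(V)$. But the $Z_i(V)$ are genuinely \emph{not} disjoint in general: the cells $X\in\mathcal{W}_v$ from Lemma \ref{lemma: sections} are sections of the finite cover $\Om'(v)\to\Om(v)$, so their images in $\Om(v)$ overlap heavily, and consequently so do the $Z_i(V)$. Simply summing the edge-uniformity estimates over $i$ therefore does not directly close the argument; one needs, as you observe, to show that the iterated intersections $\bigcap_{i\in S}\pi(Y_i)$ are again tops of $P(u)^-$-chains (which holds since fibre products commute with intersections) and then run inclusion--exclusion over the at most $2^N$ subsets $S$, absorbing the $2^N$ factor into the $O(\cdot)$. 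This is exactly the right fix, and it makes explicit a step the paper leaves implicit. The only small caveat is that the resulting constant depends on $N$, which in turn depends on the degrees of the varieties in $\Om'$ as well as $\Om$ — but this dependence is already present in the paper's use of Lemma \ref{lemma: sections} and is unavoidable in this argument, so it is not a defect of your proof specifically.
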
 

\begin{proof}
Let $\phi = \rho_V \Om(V)$ and let $\Om'$ be a $P(V)^-$-refinement of $\Om \upharpoonright P(V)^-$. Write $\pi$ for the projection $\Om' \to \Om$. It is enough to show that if  $\Om$ is $\e$-edge-uniform, then, specialising to some $K_q$, for every $P(V)^-$-chain $W$ in $\Om'$, 
\[
|\frac{|\phi \cap \pi(W(V))|}{|\phi |} - \frac{|\pi(W(V))|}{|\Om(V)|}| = O(\e_V(q)).
\]
 
Fix $W$ a chain in $\Om'$. Let $(\mathcal{W}_u)_{P(V)^-}$ be a chain decomposition of $\Om'$ given by Lemma \ref{lemma: sections}. Let $(W_i)$ be an enumeration of the chains in $(\mathcal{W}_u)_{P(V)^-}$. Define $W_i' = W \cap W_i$. 

For every $i$, $\pi(W_i'(V))$ is   equal to the fibre product $\prod(\pi( W_i'(v)), v \in P(V)^-)$, since $\pi$ is  injective on each $W_i'(v)$.  Write $Z_i(u) =  \pi(W_i'(u))$ so that each $Z_i$ is a $P(V)^-$-chain  in $\Om$. It follows that $\pi(W(V)) = \pi(\bigcup_i W_i'(V)) = \bigcup_i Z_i(V)$. Now $\e$-edge-uniformity applied to each chain $Z_i$ gives the result.
\end{proof}

\noindent \textbf{Remark:} Back in definition \ref{definition: combinatorial edge uniformity}, we could have defined  the notion of \emph{definable $\e$-edge-uniformity} in the expected way, but we chose not to discuss this notion. The reason for this is that  Lemma \ref{lemma: edge uniform same as etale-edge-uniform} does not allow us to show that definable $\e$-edge-uniformity is   equivalent to definable $\e$-\'etale-edge-uniformity. This is because the proof of Lemma \ref{lemma: edge uniform same as etale-edge-uniform} goes via the sections constructed in Lemma \ref{lemma: sections}, which are not definable.  It would be interesting to know if Lemma \ref{lemma: edge uniform same as etale-edge-uniform} can be extended to the definable setting.

  \medskip

Combining Proposition \ref{proposition: definable edge uniformity and regular systems} and Proposition  \ref{proposition: quasirandomness}, we find that we have already proved that  if $\phi$ is definably $q^{-1/2}$-\'etale-edge-uniform with respect to $\Om$ then $\phi$ is $q^{-1/2}$-quasirandom with respect to $\Om$. Suitably transposed to our setting, this is close to one direction of the fundamental Theorem 4.1 in \cite{Gowers2006}. Proposition \ref{proposition: edge uniformity to quasirandom} proves this in full generality, with general error bounds. We follow the proof of \cite{Gowers2006}. This result  will be needed for Theorem \ref{theorem: combinatorial regularity, classical partitions}.

\begin{proposition}\label{proposition: edge uniformity to quasirandom}
Let $\Om$ be a    system of   varieties on $V$ over $A$. 
For every $u \subseteq V$ with $|u| \geq 2$, let $\e_u : \Nn \to [0, \infty)$. Write $\e = (\e_u)_{u \in P(V)}$. 
If $\Om$ is   $\e$-\'etale-edge-uniform   then $\Om$ is $\e$-quasirandom.
 \end{proposition}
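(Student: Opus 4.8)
The plan is to reduce the statement to a purely combinatorial implication about finite sets and then run the induction of Gowers (\cite{Gowers2006}, Theorem 4.1). First, by Lemma \ref{lemma: edge uniform same as etale-edge-uniform}, the hypothesis that $\Om$ is $\e$-\'etale-edge-uniform is equivalent to $\Om$ being $\e$-edge-uniform, so I work with the latter. Next, the hypothesis and the conclusion both hold or fail $q$-by-$q$: $\e$-edge-uniformity and $\e$-quasirandomness are each phrased as a single statement quantified over almost all $q$, with the remaining quantifiers (over $u$ and over chains) lying inside. So it suffices to fix one prime power $q$ in the relevant cofinite class of Frobenius specialisations and derive, for that $q$, the defining inequality of $\e$-quasirandomness for every $u \subseteq V$ with $|u| \geq 2$; ``almost all $q$'' then propagates, since the relevant classes are closed under finite intersection. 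It is moreover enough to treat the case $u = V$, applying the argument below to the subsystem $\Om \upharpoonright P(u)$ for general $u$. Fixing such a $q$, write $\phi = \rho_V\Om(V) \subseteq \Om(V)^-$, $\delta = |\phi|/|\Om(V)^-|$ and $f = \I_\phi - \delta\,\I_{\Om(V)^-}$.

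Unwinding Definition \ref{definition: combinatorial edge uniformity}(1) at level $V$, $\e$-edge-uniformity says exactly that for every $P(V)^-$-chain $W$ contained in $\Om \upharpoonright P(V)^-$,
\[
\Bigl|\,\sum_{W(V)} f\,\Bigr| \;=\; \bigl|\, |\phi \cap W(V)| - \delta\,|W(V)|\,\bigr| \;=\; O(\e_V(q))\,|\Om(V)^-|,
\]
where $W(V) = \prod(W(v), v \in P(V)^-)$. The proposition therefore follows from the combinatorial statement that is, up to normalisation, the implication uniform $\Rightarrow$ quasirandom (one half of Theorem 4.1 of \cite{Gowers2006}): if $f : \Om(V)^- \to [-1,1]$ satisfies $|\sum_{W(V)} f| \leq \theta\,|\Om(V)^-|$ for every $P(V)^-$-chain $W$, then
\[
\sum_{D(\Om \upharpoonright P(V)^-)}\ \prod_{\iota \in 2^V} f^\iota \;=\; O(\theta)\,|\Om(V)^-|^2 ,
\]
with implied constant depending only on the degrees of the covers in $\Om$ (the only geometric input being the elementary bound $|D(\Om \upharpoonright P(V)^-)| = O(|\Om(V)^-|^2)$, which follows from those bounded degrees together with Theorem \ref{theorem: twisted lang-weil}). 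I would prove this combinatorial statement by induction on $|V|$, following \cite{Gowers2006}. For $|V| = 2$ the set $\Om(V)^-$ is a product of two finite sets, the $P(V)^-$-chains are precisely the combinatorial rectangles, so the hypothesis says exactly that $f$ has cut norm $O(\theta)$; the octahedral sum equals $\sum_{a,a'}\bigl(\sum_b f(a,b)f(a',b)\bigr)^2$, and the conclusion is the standard fact that a small cut norm forces a small box norm. For $|V| > 2$ I would distinguish a vertex $v_0 \in V$, set $V' = V \setminus \{v_0\}$, and split the sum over $D(\Om \upharpoonright P(V)^-)$ along the $v_0$-doubling: the coordinates lying over $V'$ form a copy of the doubling of $\Om \upharpoonright P(V')$, and grouping the $2^V$ factors into the pairs sharing a restriction to $V'$ rewrites the octahedral sum as a sum of squares $\sum_\alpha R(\alpha)^2$ over the $V'$-part. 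An iterated application of the Cauchy--Schwarz inequality (splitting the $2^V$ corners successively according to their restrictions to vertices of $V$) then bounds this by level-$(|V|-1)$ octahedral sums of auxiliary \emph{conditioned} functions, plus contributions coming from the constant part $\delta$; the auxiliary functions are checked to inherit the hypothesis that all chain-sums are $O(\theta)$, so the inductive hypothesis applies, and the remaining contributions are controlled directly by the $\e_V$-edge-uniformity bound above.

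The genuine mathematical content is thus the standard package (iterated Cauchy--Schwarz together with the cut-norm/box-norm comparison), and I expect the main obstacle to be bookkeeping rather than any new idea. One must correctly unwind the fibre-product structure of the doubling $D(\Om \upharpoonright P(V)^-)$ relative to the distinguished vertex $v_0$, the subtlety being that $f = \I_\phi - \delta$ depends on \emph{all} of the coordinates $\Om(v)$, $v \in P(V)^-$, not just the singleton ones, so the $v_0$-fibres and the $V'$-part are intertwined by the compatibility constraints of the fibre product. And one must verify that each auxiliary function produced by conditioning still satisfies the chain-sum hypothesis: this last point is of the same nature as the passage from Lemma \ref{lemma: sections} to Lemma \ref{lemma: edge uniform same as etale-edge-uniform}, in that any chain surfacing after the reduction is the image, under the bounded-degree projections inside $\Om$, of an actual chain in $\Om$, so its $f$-sum is again $O(\theta)\,|\Om(V)^-|$. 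With these in hand the induction closes, and the $O(\cdot)$-constants stay controlled by the degrees of the varieties in $\Om$ throughout, as required.
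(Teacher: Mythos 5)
Your proposed route is genuinely different from the paper's, and it contains a gap I do not see how to close.

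The paper's proof of this proposition is a single-step argument with no induction: one peels off the $\iota_1$-corner of the octahedron $D(\Om \upharpoonright P(V)^-)$, bounds $|f^{\iota_1}(y)| \leq 1$ and applies the triangle inequality, and then observes that for each fixed $b$ in the $\iota_1$-corner the remaining inner sum $\sum_{D(\Om_0)(xb)} \prod_{\iota \neq \iota_0, \iota_1} f^\iota$ is a bounded linear combination of indicators $\I_{\pi(W_j(V))}$ of images of chains $W_j$ in a $P(V)^-$-refinement $\Om_b$ of $\Om_0$ that depends on $b$. Each such image is controlled by $\e$-\'etale-edge-uniformity, and that is the whole proof. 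The \'etale formulation is load-bearing here: the level sets that appear after conditioning on $b$ are not chains in $\Om$, nor images of chains in $\Om$ under projections internal to $\Om$; they are images of chains in the $b$-dependent refinement $\Om_b$. Your opening move of discarding the \'etale hypothesis in favour of $\e$-edge-uniformity via Lemma \ref{lemma: edge uniform same as etale-edge-uniform} is harmless by itself (the lemma is an equivalence), but your later characterisation of the conditioned sets as ``the image, under the bounded-degree projections inside $\Om$, of an actual chain in $\Om$'' is not correct and is where the subtlety lives.

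The deeper problem is the tool you reach for. Iterated Cauchy--Schwarz along the $v_0$-doubling is the mechanism behind the \emph{converse} implication, namely Proposition \ref{proposition: quasirandomness to edge-uniformity}; it is used to dominate a chain-sum by a box norm, not the other way around. If you try to run it here by writing
\[
\sum_{D(\Om_0)} \prod_{\iota \in 2^V} f^\iota \;=\; \sum_{z^{(0)}, z^{(1)} \in \Om(v_0)} \sum_{D(\Om_0 \upharpoonright P(V')^-)} \prod_{\iota' \in 2^{V'}} \bigl(f^{z^{(0)}} f^{z^{(1)}}\bigr)^{\iota'}
\]
you need the inductive hypothesis to apply to $g_{z^{(0)} z^{(1)}} = f^{z^{(0)}} f^{z^{(1)}}$, and hence you need $g$ to have small $P(V')^-$-chain-sums for most pairs $(z^{(0)}, z^{(1)})$. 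This does not follow from $f$ having small $P(V)^-$-chain-sums. Lifting a $V'$-chain $W$ by the singleton $\{z^{(j)}\}$ at $v_0$ produces a legitimate $V$-chain $W'$, but the resulting bound $|\sum_{W'(V)} f| = O(\e_V(q)) |\Om(V)^-|$ is worse than the size $|W(V')| \le |\Om(V')^-|$ of the slice by a factor $|\Om(v_0)|$, so the relative error blows up and tells you nothing about the slice. Expanding $g = \I_{\phi^{(0)}\cap\phi^{(1)}} - \delta \I_{\phi^{(0)}} - \delta \I_{\phi^{(1)}} + \delta^2$ makes this concrete: the term $|\phi^{(0)} \cap \phi^{(1)} \cap W(V')|$ is not controlled by the uniformity hypothesis on $\phi$ at all. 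So the induction does not close; the correct argument is the direct one that conditions on the $\iota_1$-corner rather than on the $v_0$-pair, and it needs the \'etale refinements $\Om_b$ to express what arises after conditioning.
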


\begin{proof}
It is enough to check quasirandomness at the top level $V$. Let $\phi = \rho_V \Om(V)$ and write $\Om_0 = \Om \upharpoonright P(V)^-$ (so that $\Om_0(V) = \Om(V)^-$). We specialise $\Om$  to some $K_q$ and write $f = \I_{\phi} - \frac{|\phi|}{|\Om(V)^-|} \I_{\Om(V)^-}$.

   Write $\iota_0, \iota_1 \in 2^{V}$ for the constant functions equal to $0$ and $1$ respectively. Then $D(\Om_0)$ is a finite cover of the  Cartesian product of $\Om_0(V, \iota_0)$ and $\Om_0(V, \iota_1)$. 
    For any $x \in \Om_0(V, \iota_0)$ and $y \in \Om_0(V, \iota_1)$, we write $D(\Om_0)(xy)$ for the finite set of elements $z \in D(\Om_0)$ which project onto $x$ and onto $y$. The cardinality of $D(\Om_0)(xy)$ is bounded across $q$. By the triangle inequality we obtain
\begin{equation}\label{equation: edge-uniform to quasi-random, 1}
|\sum_{D(\Om_0)} \prod_{\iota \in 2^V} f^\iota | \leq  \sum_{y \in \Om_0(V, \iota_1)}\Big( |f^{\iota_1}(y)| \cdot |\sum_{x \in \Om_0(V, \iota_0)} f^{\iota_0}(x) \sum_{D(\Om_0)(xy)} \prod_{\iota \neq \iota_0, \iota_1} f^\iota|\Big).
\end{equation}
   Fix $b  \in \Om_0(V, \iota_1)$, and for every $u \in P(V)^-$, write $b_u$ for the projection of $b$ to $\Om(u, \iota_1 \upharpoonright u)$. For every $u \in P(V)^-$, define the function $\iota_u \in 2^V$ by $\iota_u \upharpoonright u = 0$ and $\iota_u \upharpoonright u^c = 1$.
 
  We  define   a $P(V)^-$-refinement $\Om_b$ of $\Om_0(V, \iota_0)$ as follows:  for every $u \in P(V)^-$ and every $a \in \Om_0(u, \iota_0 \upharpoonright u)$, the pullback of $a$ to $\Om_b(u)$ is the set of elements $c$ such that there is $v \subseteq u$ with $c \in \Om_0(V, \iota_v)$ and the projection of $c$ to $v$ equals $a_v$ and the projection of $c$ to $v^c$ equals $b_{v^c}$. It is clear how to set up projections $\Om_b(u) \to \Om_b(v)$ for $v \subseteq u$. Write $\pi$ for the projections $\Om_b \to \Om_0$.\footnote{\label{footnote: edge-uniform to quasirandom} Note that $\Om_b$ is   defined over $Ab$ as a system of difference varieties, but we could equally well define a refinement of $\Om$ over $A$ with multiplicities matching those of $\Om_b$ and set up identification maps with $\Om_b$. This would amount to exactly the same, from a combinatorial point of view.  }

Observe that $\sum_{D(\Om_0)(xb)} \prod_{\iota \neq \iota_0, \iota_1} f^\iota$ is a function of $x \in \Om_0(V, \iota_0)$ which takes finitely many values, according to the number of elements above $x$ in $\Om_b(V)$ which belong to $\phi^\iota$ or not. Therefore, there is some $N$ (which does not depend on $q$), some scalars $\lambda_1, \ldots, \lambda_N$, and some chains $W_1, \ldots, W_N$ contained in $\Om_b$ such that 
\[
\sum_{D(\Om_0)(xb)} \prod_{\iota \neq \iota_0, \iota_1} f^\iota = \sum_{j \leq N} \lambda_j \I_{\pi(W_j(V))}
\]
 By $\e$-\'etale-edge-uniformity, we have 
 \begin{equation}\label{equation: edge-uniform to quasirandom, 2}
|\sum_{x \in \Om_0(V, \iota_0)} f^{\iota_0}(x) \sum_{D(\Om_0)(xb)} \prod_{\iota \neq \iota_0, \iota_1} f^\iota | = O(\e_V(q))|\Om(V)|
 \end{equation}
Combining (\ref{equation: edge-uniform to quasi-random, 1}) and (\ref{equation: edge-uniform to quasirandom, 2}), we have 
\[
|\sum_{D(\Om_0)} \prod_{\iota \in 2^V} f^\iota |  = O(\e_V(q))|\Om(V)|^2.
\]
 
\end{proof}

%

We now prove the near-converse of Proposition \ref{proposition: edge uniformity to quasirandom}. The proof essentially follows   \cite{Gowers2006} but requires an application  of regularity of the system of varieties.

 \begin{proposition}\label{proposition: quasirandomness to edge-uniformity}
Let $\Om$ be a regular system of difference varieties on $V$ over $A$.  For every $u \subseteq V$ with $|u| \geq 2$, let $\e_u : \Nn \to [0, \infty)$ and let $\e = (\e_u)_{|u| \geq 2}$. 
 If  $\Om$ is $\e$-quasirandom, then $\Om$ is $(\e_u^{2^{-|u|}})$-edge-uniform.  
 \end{proposition}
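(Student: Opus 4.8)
The plan is to follow the iterated (``octahedral'') Cauchy--Schwarz argument of \cite{Gowers2006}, using the chain structure to factor the relevant indicator function and invoking regularity of $\Om$ to handle the fibre-product bookkeeping. It is enough to verify the defining inequality of $\e$-edge-uniformity at the top level $u = V$, the general $u$ following by the same argument applied to $\Om$ restricted to the vertex set $u$. So fix a large prime power $q$, specialise $\Om$ to $K_q$, and set $n = |V|$, $d = d_V = \dim\Om(V)$, $\phi = \rho_V\Om(V)\subseteq\Om(V)^-$, $\Om_0 = \Om\upharpoonright P(V)^-$ (so $\Om_0(V) = \Om(V)^-$), and $f = \I_\phi - \tfrac{|\phi|}{|\Om(V)^-|}\I_{\Om(V)^-}$. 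First I would record the size estimates: since $\Om$ is regular, Lemma \ref{lemma: measure of system of varieties is measure of product} gives $\dim\Om(V)^- = d$ and $\mu_d(\Om(V)^-) > 0$, and Proposition \ref{proposition: alternative characterisation of measure} gives $\mu_d(\phi) = \nu(\phi)\mu_d(\Om(V)^-) > 0$ (the generic point of $\Om(V)$ already witnesses $N(\phi)\geq 1$), so by Theorem \ref{theorem: twisted lang-weil} one has $|\phi| = \Theta(q^d)$, $|\Om(V)^-| = \Theta(q^d)$, and in particular $|\phi|\asymp|\Om(V)^-|$, with constants depending only on the degrees. Since for any $P(V)^-$-chain $W$ contained in $\Om_0$ one has $|\phi\cap W(V)| - \tfrac{|\phi|}{|\Om(V)^-|}|W(V)| = \sum_{x\in W(V)} f(x)$, the claim reduces to showing
\[
\Big|\sum_{x\in W(V)} f(x)\Big| = O\big(\e_V(q)^{2^{-n}}\big)\,|\Om(V)^-| .
\]

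For the main estimate I would first factor the chain indicator. Because $W$ is a $P(V)^-$-chain, a point of $\Om(V)^-$ lies in $W(V)$ exactly when its image in $\Om(V\setminus\{i\})$ lies in $W(V\setminus\{i\})$ for every $i\in V$ (each $v\in P(V)^-$ is contained in some $V\setminus\{i\}$, and $W$ is a chain), so $\I_{W(V)} = \prod_{i\in V} h_i$, where $h_i$ is the pullback of $\I_{W(V\setminus\{i\})}$ along $\Om(V)^-\to\Om(V\setminus\{i\})$; thus $|h_i|\leq 1$ and $h_i$ is independent of the vertex coordinate $i$. Then I would run the octahedral Cauchy--Schwarz iteration: enumerating $V = \{1,\dots,n\}$ and applying Cauchy--Schwarz successively in the vertex directions $1,\dots,n$, at step $i$ the factor $h_i$ is the unique one not depending on direction $i$, so it comes out with a factor bounded by the cardinality of its (current) domain and then disappears, while $f$ and the remaining $h_j$ get doubled in direction $i$. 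After all $n$ steps the $h_i$ are gone, $f$ has become $\prod_{\iota\in 2^V} f^\iota$ on the fully doubled system $D(\Om_0)$, and the accumulated factors combine to a constant $C = \Theta(|\Om(V)^-|^{2^n - 2})$, giving
\[
\Big|\sum_{x\in W(V)} f(x)\Big|^{2^n} \leq C\sum_{D(\Om_0)}\prod_{\iota\in 2^V} f^\iota .
\]
By the $\e$-quasirandomness hypothesis the right-hand side is $O(\e_V(q))\,|\Om(V)^-|^2$, so $\big|\sum_{W(V)} f\big|^{2^n} = O(\e_V(q))\,|\Om(V)^-|^{2^n}$; dividing the resulting bound by $|\phi|\asymp|\Om(V)^-|$ yields the $\e$-edge-uniformity inequality with error $O(\e_V(q)^{2^{-n}}) = O(\e_V(q)^{2^{-|V|}})$, as desired.

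The hard part, and the place where regularity is genuinely used in contrast with the classical argument of \cite{Gowers2006}, is the Cauchy--Schwarz bookkeeping, because $\Om(V)^-$ and the partially doubled systems occurring in the iteration are fibre products rather than Cartesian products. Each Cauchy--Schwarz step splits the ambient set over the fibres of a projection onto a ``boundary'' in the doubled directions, and the rearrangement of sums is only valid up to controlling the failure of these fibres, and of the fibres of the resulting varieties over the corresponding Cartesian products of vertex varieties, to have constant cardinality. Regularity of $\Om$ is exactly what makes this work: it forces $\Om_0$ together with all of its doublings to be irreducible (the Remark after Definition \ref{definition: doubling}), hence every projection among them has generically constant multiplicity (the Remark after Definition \ref{definition: systems of varieties}), with the exceptional loci being subvarieties of strictly smaller dimension whose point counts are $O(q^{d-1/2})$ by Theorem \ref{theorem: twisted lang-weil} and are absorbed into the error term; and regularity, via Lemma \ref{lemma: measure of system of varieties is measure of product} applied to $D(\Om_0)$, guarantees the doubled system has full dimension $2d$ and the expected measure, so $|D(\Om_0)| = \Theta(|\Om(V)^-|^2)$. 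Once this is in place, the iteration reduces verbatim to the Cartesian-product computation of \cite{Gowers2006}.
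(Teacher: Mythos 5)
Your strategy is the right one and essentially the one the paper uses: iterated Cauchy--Schwarz in the style of Gowers, with regularity supplying irreducibility of the (partially) doubled systems and hence generically constant multiplicities, and Lang--Weil absorbing the exceptional loci. Your reduction to the top level $u = V$, the size estimates $|\phi| \asymp |\Om(V)^-| \asymp q^d$, the factorisation $\I_{W(V)} = \prod_{i\in V} h_i$ via the chain property, and the final dimension counting are all consistent with the paper's argument.

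Where your account is too casual is the claim that ``the iteration reduces verbatim to the Cartesian-product computation of \cite{Gowers2006}''. The paper does \emph{not} simply iterate Cauchy--Schwarz $n$ times; it proceeds by induction on $n = |V|$, performing a single Cauchy--Schwarz that doubles one vertex $*$, then decomposing the resulting sum over the pairs $(x,y) \in \Om(*)^2$, applying \emph{Jensen's inequality} to push the exponent $2^{n-1}$ inside that sum, and only then applying the inductive hypothesis on $V' = V\setminus\{*\}$. Crucially, the inductive hypothesis lives on the system $\Gamma_{xy} = \Lambda_{xy}\upharpoonright P(V')^-$, whereas the doubled function $g = f^{\iota_0}f^{\iota_1}$ lives on the bigger variety $\Lambda_{xy}(V')$, which is a nontrivial finite cover of $\Gamma_{xy}(V')$ (it remembers the $\Om(V')$-level data). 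The paper therefore introduces the \emph{push-forward} $h_{xy}(a) = \sum_{b\in\Lambda_{xy}(V')(a)} g(b)$ and proves a standalone Claim that $\sum_{D(\Gamma_{xy})} \prod_\iota h_{xy}^\iota = \sum_{D(\Lambda_{xy})}\prod_\iota g^\iota$. This Claim, together with the Jensen step, is exactly where the fibre-product structure diverges from the Cartesian case, and neither appears in Gowers' original argument. Your proposal does not reconstruct them, and ``controlling the failure of fibres to have constant cardinality'' understates what is needed: one must actually identify each partially doubled domain as the top level of a concrete $D(\cdot)$-system so that the accumulated factors combine to $\Theta(|\Om(V)^-|^{2^n-2})$, and that identification is the content of the paper's Claim, not a side remark. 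I would ask you to either carry out the direct $n$-step iteration in full (verifying at each step what the boundary variety $B_i$ is, that it is the specialisation of an irreducible regular system, and that the final object is $D(\Om_0)$), or to adopt the paper's inductive formulation and supply the push-forward and the Claim.

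One smaller point: Lemma~\ref{lemma: measures of varieties in cover well behaved} is the precise tool for the estimate $|\Lambda_{xy}(V')| = O(|\Om(V')|)$ for generic $(x,y)$; your reliance on ``generically constant multiplicity'' alone does not quite give you the matching of measures across the fibre, which is what is actually needed when you combine the factors at the end.
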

 
\begin{proof}
It is enough to check edge-uniformity at the top level $V$.  We specialise all data to some $K_q$. 
Let $\phi = \rho_V \Om(V)$ and let $W$ be a $P(V)^-$-chain contained in $\Om \upharpoonright P(V)^-$. Write $\Om_0 = \Om \upharpoonright P(V)^-$ so that $\phi \subseteq \Om_0(V)$.  In the next expression we write $\I_{W(v)}$ for the indicator function of the set of elements of $\Om_0(V)$ which project to $W(v)$ under the map $\Om_0(V) \to \Om(v)$.
    Define 
    \[
    \Delta =  ||\phi \cap W(V)| - \frac{|\phi|}{|\Om_0(V)|} |W(V)| |= |\sum_{\Om_0(V)}f \prod_{v \in P(V)^-} \I_{W(v)}|.
    \]
 We want to show that when $|V| = n$, 
\[
\Delta^{2^n}  = O(|\Om_0(V)|^{2^{n}-2}) |\sum_{D(\Om_0)} \prod_{\iota \in 2^V} f^{\iota}|.
\]
We proceed by induction on $n$. The induction holds for an arbitrary function $f$ but requires the underlying system to be the specialisation of a regular system. The base case for the induction is $n= 2$, which is exactly Theorem 3.1 in \cite{Gowers2006}, since $P(2)^-$-systems are just Cartesian products.

We will use the following notation: for $v \subseteq u \subseteq V$ and $x \in \Om(v)$, write $\Om(u)(x)$ for the pullback of $x$ to $\Om(u)$ by the map $\Om(u) \to \Om(v)$. 

Let $*$ be an element of $V$ and write $V' = V \setminus \{*\}$ and $V^+ = V' \cup \{(*, 0), (*, 1)\}$. Let $\Om_1$ be the system of varieties on $V^+$ obtained from $\Om_0$ by ``doubling $*$'': $\Om_1((*, 0))$ and $\Om_1((*, 1))$ are two copies of $\Om_0(*)$ and restricting $\Om_1$ to $V^+ \setminus \{(*, 0)\}$ or $V^+ \setminus \{(*, 1)\}$ gives isomorphic copies of $\Om_0(V)$ which project to $\Om_0(V')$. For $v \subseteq V'$, we write $v0 = v \cup \{(*, 0)\}$, $v1 = v \cup \{(*, 1)\}$ and $v01 = v \cup \{(*, 0), (*, 1)\}$. For $i = 0,1$ we write $W(vi)$ for the copy of $W(v \cup \{*\})$ inside $\Om_1(vi)$.

By Cauchy-Schwartz and then expanding, we have 
\begin{IEEEeqnarray}{rCl}
\Delta^{2^n}   & = & \Big(\sum_{x \in \Om_0(V')}  \I_{W(V')}(x)  \sum_{\Om_0(V)(x)} f \prod_{\substack{v \in P(V)^- \\ v \neq V'}} \I_{W(v)}  \Big)^{2^n} \IEEEnonumber \\
& \leq & \Big( \sum_{x\in \Om_0(V')}\I_{W(V')}(x)  \Big)^{2^{n-1}} \Big( \sum_{x \in \Om_0(V')} \big(\sum_{\Om_0(V)(x)} f \prod_{\substack{v \in P(V)^- \\ v \neq V' }} \I_{W(v)}  \big)^2 \Big)^{2^{n-1}} \IEEEnonumber\\
& \leq & |\Om_0(V')|^{2^{n-1}} \Big( \sum_{\Om_1(V^+)} g   \prod_{\substack{v \in P(V')^- }}  \I_{W(v0)}\I_{W(v1)}  \Big)^{2^{n-1}}\label{equation: quasirandom to edge uniform, 1}
\end{IEEEeqnarray}
where $g$ is defined to be the product of the two copies of $f$ inside $\Om_1(V^+)$.

Now we decompose $\Om_1(V^+)$ along the two copies of $\Om_0(*)$. For any $(x, y) \in (\Om_0(*))^2$, write $\Lambda_{xy}$ for the pullback of $(x, y)$ to $\Om_1(V^+)$.   There is a finite projection $\Lambda_{xy} \to \Om_0(V')$ so $|\Lambda_{xy}| = O(|\Om_0(V')|)$ where the constant in $O$ does not depend on $x$ or $ y$.    

$\Lambda_{xy}$ is  a $P(V')$-system when we define $\Lambda_{xy}(u)$  for $u \subseteq V'$ as the pullback of $(x, y)$ to   $\Om_1(u01)$. For every $v \subseteq V'$, define $W(v01) \subseteq \Lambda_{xy}(v)$ to be the fibre product of the copies of $W(v0)$ and $W(v1)$. 

Let $\Gamma_{xy}$ be the restriction of $\Lambda_{xy}$ to $P(V')^-$. Observe that $\Gamma_{xy}$ ``forgets'' $\Om_0(V')$ and hence there is a finite projection $\Lambda_{xy}(V') \to \Gamma_{xy}(V')$ corresponding to the projection $\Om_0(V') \to \Om_0(V')^-$.  For $a \in \Gamma_{xy}(V')$, write  $\Lambda_{xy}(V')(a)$ for the pullback of $a$ to $\Lambda_{xy}(V')$ and define
\[
h_{xy}(a) = \sum_{b \in \Lambda_{xy}(V')(a)}g(b).
\]
Fixing   $(x,y) \in \Om\{*\} \times \Om\{*\}$ we have 

\begin{IEEEeqnarray*}{rCl}
 \sum_{b \in \Lambda_{xy}(V') } g(b)   \prod_{\substack{v \in P(V')^-  }}\I_{W(v0)} \I_{W(v1)}(b)  
  & = & \sum_{a \in \Gamma_{xy}(V')} h_{xy}(a) \prod_{\substack{v \in P(V')^-  }}   \I_{W(v01)} (a) \IEEEyesnumber \label{equation: quasirandom to edge uniform, 2}
\end{IEEEeqnarray*}
Combining   (\ref{equation: quasirandom to edge uniform, 1}) and (\ref{equation: quasirandom to edge uniform, 2}) and applying Jensen's inequality, we have:
\begin{IEEEeqnarray*}{rCl}
\Delta^{2^n}   & \leq  & |\Om(V')|^{2^{n-1}} \Big( \sum_{(x, y) \in (\Om(*))^2}  \sum_{\Gamma_{xy}(V')} h_{xy} \prod_{v \in P(V')^-} \I_{W(v01)} \Big)^{2^{n-1}} \\
& \leq & |\Om(V')|^{2^{n-1}}|\Om(*)|^{2^n-2}  \sum_{(x, y) \in (\Om(*))^2} \Big(\sum_{\Gamma_{xy}(V')} h_{xy} \prod_{v \in P(V')^-} \I_{W(v01)} \Big)^{2^{n-1}} \IEEEyesnumber \label{equation: quasirandom to edge uniform, 3}
\end{IEEEeqnarray*}
By induction hypothesis, we have for every sufficiently generic pair $x, y$ in $\Om(*)$ 
\begin{IEEEeqnarray*}{rCl}
\Big(\sum_{\Gamma_{xy}(V')} h_{xy} \prod_{v\in P(V')^-} \I_{W(v01)} \Big)^{2^{n-1}} & = &  O(|\Gamma_{xy}(V')|^{2^{n-1}-2}) \sum_{D(\Gamma_{xy})} \prod_{\iota \in 2^{V'}} h_{xy}^\iota\IEEEyesnumber \label{equation: quasirandom to edge uniform, 4}
\end{IEEEeqnarray*}

\begin{claim}
For any $(x, y) \in (\Om\{*\})^2$, $\sum_{D(\Gamma_{xy})} \prod_{\iota \in 2^{V'}} h_{xy}^\iota = \sum_{D(\Lambda_{xy})} \prod_{\iota \in 2^{V'}} g^{\iota}$. 
\end{claim}

\begin{proof}[Proof of Claim]
This proof is essentially a definition chase. For $c \in D(\Gamma_{xy})$ and $\iota \in 2^{V'}$, write $c_\iota$ for the projection of $c$ to $\Gamma_{xy}(V',\iota)$.  By definition, 
\[
\sum_{c \in D(\Gamma_{xy})} \prod_{\iota \in 2^{V'}} h_{xy}^\iota(c) =  \sum_{c \in D(\Gamma_{xy})} \prod_{\iota \in 2^{V'}} \sum_{b \in \Lambda_{xy}(V', \iota)(c_\iota)}g^\iota(b) 
\]
where $\Lambda_{xy}(V', \iota)$ is a copy of $\Lambda_{xy}$ inside $D(\Lambda_{xy})$. Fix $c \in D(\Gamma_{xy})$. Let $\Xi$ be the set of choice functions $2^{V'} \to \bigsqcup_{\iota \in 2^{V'}} \Lambda_{xy}(V', \iota)(c_{\iota})$. 
Expanding, we have 
\[
 \prod_{\iota \in 2^{V'}} h_{xy}^\iota(c) = \sum_{\xi \in \Xi} \prod_{\iota \in 2^{V'}} g^{\iota}(\xi(\iota)).
\]
Since $D(\Lambda_{xy})$ is the fibre product of the varieties $\Lambda_{xy}(V', \iota)$, any $d \in D(\Lambda_{xy})$ is uniquely determined by its projections to each $\Lambda_{xy}(V', \iota)$. Since $D(\Lambda_{xy})$ is a $D(P(V'))$-system, each set $(V', \iota) \in D(P(V'))$ is maximal and hence we can define a bijection $\Xi \to D(\Gamma_{xy})(c)$ such that $\xi \mapsto (d_\iota)_{\iota \in 2^{V'}}$. This implies that 
\[
 \prod_{\iota \in 2^{V'}} h_{xy}^\iota(c) = \sum_{d \in D(\Gamma_{xy})(c)} \prod_{\iota \in 2^{V'}} g^{\iota}(d_\iota).
\]
The claim follows.
 \end{proof}
By the claim and (\ref{equation: quasirandom to edge uniform, 4}), we have 
\begin{equation}\label{equation: quasirandom to edge uniform, 5}
\Big(\sum_{\Gamma_{xy}} h_{xy} \prod_{v\in P(V')^-} \I_{W(v01)} \Big)^{2^{n-1}} = O(|\Lambda_{xy}(V')|^{2^{n-1}-2}) \sum_{D(\Lambda_{xy})} \prod_{\iota \in 2^{V'}} g^{\iota}.  
\end{equation}
 
By Lemma \ref{lemma: measures of varieties in cover well behaved}, Theorem \ref{theorem: twisted lang-weil}, and Lemma \ref{lemma: explicit definable measure on varieties}, $|\Lambda_{xy}(V')| = O(|\Om(V')|)$ for sufficiently generic $x, y$. Therefore, we combine (\ref{equation: quasirandom to edge uniform, 3}) and (\ref{equation: quasirandom to edge uniform, 5}) to obtain
\begin{IEEEeqnarray*}{rCl}
\Delta^{2^n} & \leq & O(|\Om(V)|^{2^{n} - 2})\sum_{(x, y) \in \Om(*)^2}  \sum_{D(\Lambda_{xy})} \prod_{\iota \in 2^{V'}} g^\iota \\
& = & O(|\Om(V)|^{2^{n} - 2}) \sum_{D(\Om)}\prod_{\iota \in 2^V} f^\iota . 
\end{IEEEeqnarray*}
This completes the induction. Now the lemma follows   from $\e$-quasirandomness. 
\end{proof}

We deduce the following hypergraph regularity lemma in the \'etale setting:

\begin{theorem}\label{theorem: combinatorial hypergraph regularity with covers}
Let $\Om$ be an irreducible   system of difference varieties on $V$ over $A$.  For every $u \subseteq V$ with $|u| \geq 2$, let $\e_u(q) = q^{-2^{-|u|-1}}$ and let $\delta_u(q) = q^{-1/2}$. Let $\e = (\e_u)_{|u| \geq 2}$ and $\delta = (\delta_u)_{|u| \geq 2}$.

Then there is a surjective refinement $\Om'$ of $\Om$ over a finitely generated   extension $A'$ of $A$ with irreducible components $\Om_1,\ldots, \Om_n$    such that each $\Om_i$ is $\delta$-quasirandom   and $\e$-edge-uniform. $n$ depends only on the degrees of the varieties in $\Om$.
\end{theorem}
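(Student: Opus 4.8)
The plan is to assemble the theorem from two ingredients already in hand: the regular refinement produced by Corollary~\ref{corollary: asymptotic quasirandomness} and the quasirandomness-to-edge-uniformity implication of Proposition~\ref{proposition: quasirandomness to edge-uniformity}. All the analytic content has been established earlier, so the proof reduces to matching up error exponents.

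First I would apply Corollary~\ref{corollary: asymptotic quasirandomness} to $\Om$ (since $\Om$ is irreducible there is no preliminary decomposition to perform). This yields a surjective $P(V)^-$-refinement $\Om'$ of $\Om$ over a finitely generated extension $A'$ of $A$ whose irreducible components $\Om_1,\dots,\Om_n$ are all regular, with $n$ bounded in terms of the degrees of the varieties in $\Om$ alone; regular systems being irreducible by definition, the $\Om_i$ are precisely the irreducible components of $\Om'$. Moreover Corollary~\ref{corollary: asymptotic quasirandomness} already records that each $\Om_i$ is $(q^{-1/2})$-quasirandom, which is exactly the $\delta$-quasirandomness of the statement (with $\delta_u(q)=q^{-1/2}$ for all $u$), so that half of the conclusion is immediate. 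If one prefers to see this without quoting the corollary: regularity of $\Om_i$ gives quasirandomness at the level of ACFA via Proposition~\ref{proposition: quasirandomness}, i.e. $\int_{D(\Om_i\upharpoonright P(u)^-)}\prod_{\iota\in 2^u}f_u^\iota\,d\mu_{2d_u}=0$; expanding the product into finitely many signed indicators of Galois formulas living in $D(\Om_i\upharpoonright P(u)^-)$ and applying the twisted Lang--Weil estimates of Theorem~\ref{theorem: twisted lang-weil} term by term, the $q^{2d_u}$ contributions cancel and an error $O(q^{2d_u-1/2})$ remains; since $|\Om_i(u)^-|=\Theta(q^{d_u})$ and the difference between the measure ratio $\mu_{d_u}(\phi_u)/\mu_{d_u}(\Om_i(u)^-)$ and the counting ratio $|\phi_u|/|\Om_i(u)^-|$ is $O(q^{-1/2})$, this gives the required bound $O(q^{-1/2})|\Om_i(u)^-|^2$.

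Next I would invoke Proposition~\ref{proposition: quasirandomness to edge-uniformity}: each $\Om_i$ is regular and $\delta$-quasirandom, hence $(\delta_u^{2^{-|u|}})_{|u|\geq 2}$-edge-uniform. Finally I would unwind the exponent: $\delta_u(q)^{2^{-|u|}}=(q^{-1/2})^{2^{-|u|}}=q^{-2^{-|u|-1}}=\e_u(q)$, so each $\Om_i$ is $\e$-edge-uniform, and together with the previous step this completes the proof.

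I do not anticipate a genuine obstacle; the theorem is essentially a repackaging of the preceding results. The only points requiring care are the bookkeeping of the error exponents — the $2^{-|u|}$-th power loss in Proposition~\ref{proposition: quasirandomness to edge-uniformity} is exactly what turns the clean $q^{-1/2}$ coming from Lang--Weil into the rapidly degrading $q^{-2^{-|u|-1}}$ of the statement — and, if one forgoes citing Corollary~\ref{corollary: asymptotic quasirandomness}, the routine passage from the exact ACFA identity of Proposition~\ref{proposition: quasirandomness} to its $q^{-1/2}$-approximate finite version via the twisted Lang--Weil estimates, including the harmless replacement of measure ratios by counting ratios.
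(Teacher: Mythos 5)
Your proof is correct and is precisely the intended one: the paper gives no explicit proof of Theorem~\ref{theorem: combinatorial hypergraph regularity with covers}, which is meant to follow by combining Corollary~\ref{corollary: asymptotic quasirandomness} (regular components, $q^{-1/2}$-quasirandom) with Proposition~\ref{proposition: quasirandomness to edge-uniformity} (regular + $\delta$-quasirandom $\Rightarrow$ $(\delta_u^{2^{-|u|}})$-edge-uniform), and your exponent bookkeeping $(q^{-1/2})^{2^{-|u|}} = q^{-2^{-|u|-1}}$ is exactly right.
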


\noindent \textbf{Remark:} Theorem \ref{theorem: combinatorial hypergraph regularity with covers} gives weaker bounds for edge-uniformity in the case $n = 2$ than the original result of \cite{Tao2012}, which proves $q^{-1/4}$-edge-uniformity, rather than $q^{-1/8}$-edge-uniformity. Tao's proof of the algebraic regularity lemma does not seem to generalise easily to the higher dimensional setting, even with the stationarity theorem. This is why we have taken the route of quasirandomness, which involves a loss in the error bounds. It would be interesting to know if  our bound can be improved.

  \medskip
  
\subsection{A classical algebraic hypergraph regularity lemma}  \label{subsection: classical hypergraph regularity}
  
\emph{We fix $A$ a perfect inversive finitely generated difference field. $V$ is a finite set.}
\medskip  
  
In this section, we use chain decompositions of systems of varieties to deduce an algebraic hypergraph regularity lemma which does not use the \'etale point of view. This algebraic hypergraph regularity lemma retains the main combinatorial features of Theorem \ref{theorem: combinatorial hypergraph regularity with covers} while eliminating references to refinements, but the  trade-off is that we lose definability of the hypergraph partitions. 
For this reason,  the definitions that we have already set up do not apply in this setting exactly, so  we refrain from using any prior notions except  chain decompositions.

We   prove two technical lemmas. The first lemma revisits Proposition \ref{proposition: regular refinement of system of varieties}. In that proposition, we constructed a surjective refinement of $\Om$ with regular components. However, our definition of surjective refinements only requires surjectivity at the top level $V$. Here we need to construct a chain decomposition of $\Om$, so we need a slightly different notion.

\begin{lemma}\label{lemma: better surjective refinement}
Let $\Om$ be an irreducible system of varieties on $V$ over $A$. There is a system of varieties $\Om_0$ on $V$ over $A$ and a surjective   $P(V)^-$-refinement $\Om_1$ of $\Om_0$ over a finitely generated extension $A'$ of $A$ satisfying the following:
\begin{enumerate}
\item for every $u \subseteq V$, $\Om(u) \subseteq \Om_0(u)$
\item for every $u \subseteq V$, the projections $\pi : \Om_1(u) \to \Om_0(u)$,  $ \Om_1(u) \to \Om_1(u)^-$ and $\Om_0(u) \to \Om_0(u)^-$ are  generically surjective. 
\item for every $u \subseteq V$ and every section $S$ of the projection $\pi : \Om_1(u)^- \to \Om_0(u)^-$, the projection $\rho_u^{-1}(S) \to \Om_0(u)$ is generically surjective\label{equation: better surjective refinement: surjectivity over sections}
\item every irreducible component of $\Om_1$ is regular
\end{enumerate}
\end{lemma}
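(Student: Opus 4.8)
The plan is to produce $\Om_0$ and $\Om_1$ in two stages. First I would enlarge $\Om$ to a system $\Om_0$ all of whose structure projections $\Om_0(u)\to\Om_0(u)^-$ are generically surjective, while keeping $\Om(u)\subseteq\Om_0(u)$. As in Proposition~\ref{proposition: regular refinement of system of varieties} one may first pass to extensions and assume $A$ algebraically closed, inversive, existentially closed in $K$ and $\omega_1$-saturated, and $\Om$ irreducible; the field $A'$ of the statement is recovered at the end by internalising finitely many parameters. Then build $\Om_0$ by induction on $|u|$: take $\Om_0(u)=\Om(u)$ for $|u|\le 1$, and for $|u|\ge 2$, with $\Om_0(v)$ already defined for $v\in P(u)^-$, let $\Om_0(u)$ be the difference variety over $\Om_0(u)^-:=\prod(\Om_0(v),v\in P(u)^-)$ whose points are pairs $(a,b)$ with $a\in\Om_0(u)^-$ and $b$ a tuple of roots of the defining polynomials $\mathcal{P}_u$ of the cover $\Om(u)\to\Om(u)^-$ generating, as a pure field, a splitting field of $\mathcal{P}_u$ over the pure field of $a$ --- imposing \emph{no} constraint on $\si(b)$. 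One checks routinely that $\Om_0$ is a system of varieties with $dim(\Om_0(u))=dim(\Om_0(u)^-)$ (each $b$ is algebraic over $a$), need not be irreducible, satisfies $\Om(u)\subseteq\Om_0(u)$ (a point of $\Om(u)$ already has $b$ in an invariant splitting field), and has $\rho_u:\Om_0(u)\to\Om_0(u)^-$ generically surjective, since for $a$ generic in a component of $\Om_0(u)^-$ the splitting field of $\mathcal{P}_u$ over the pure field of $a$ exists in $K$, is $\si$-invariant by hypothesis on $\Om$, and any generator of it lies over $a$. This gives (1) and the third projection in (2).

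For $\Om_1$ I would apply the construction in the proof of Proposition~\ref{proposition: existence of regular refinements}, componentwise, to the systems of difference fields associated to the irreducible components of $\Om_0$. This produces a surjective refinement $\Om_1$ over a finitely generated extension $A'$ of $A$ whose irreducible components are regular, which is (4). The point to exploit is that the regularisation step at a level $u$ only adjoins effective-algebraic extensions at levels $v\in P(u)^-$, never at $u$ itself; processing levels from the top down, nothing is adjoined at level $V$, so $\Om_1(V)$ is the fibre product of $\Om_0(V)$ with $\Om_1(V)^-$ over $\Om_0(V)^-$ and $\Om_1$ is a $P(V)^-$-refinement of $\Om_0$, with identification maps set up over $A'$ as usual. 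Generic surjectivity of $\Om_1(u)\to\Om_0(u)$ and of $\Om_1(u)\to\Om_1(u)^-$ follows because $\Om_1(u)$ is obtained from $\Om_0(u)\times_{\Om_0(u)^-}\Om_1(u)^-$ by a further finite cover whose fibres, coming from invariant Galois extensions, are generically nonempty over all of $\Om_0(u)^-$; together with the surjectivity of $\rho_u$ for $\Om_0$ this yields the remaining assertions of (2).

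It remains to establish (3). At the top level it is immediate: since $\Om_1(V)=\Om_0(V)\times_{\Om_0(V)^-}\Om_1(V)^-$, for any section $S$ of $\pi:\Om_1(V)^-\to\Om_0(V)^-$ the set $\rho_V^{-1}(S)$ maps isomorphically onto $\Om_0(V)$ after forgetting the $\Om_1(V)^-$-coordinate. For $u\subsetneq V$ one argues in the same spirit, but must account for the finite cover of $\Om_0(u)\times_{\Om_0(u)^-}\Om_1(u)^-$ that $\Om_1(u)$ may be: the adjoined data at level $u$ lives in an \emph{invariant} Galois extension, so over $S$ --- which freezes a coherent choice of the lower-level Galois data --- that data can be re-extended over every point of the fibre of $\Om_0(u)$ by universal compatibility of such extensions, and since $\Om_0(u)$, $\Om_0(u)^-$, $\Om_1(u)$, $\Om_1(u)^-$ and $\rho_u^{-1}(S)$ all have the same total dimension $d_u$, this forces $\rho_u^{-1}(S)\to\Om_0(u)$ to be generically surjective. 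I expect this last point to be the main obstacle: one must track precisely which extensions Proposition~\ref{proposition: existence of regular refinements} adjoins at each level and verify that freezing a section below level $u$ still leaves room to re-extend $\si$ and hit every point of $\Om_0(u)$; the rest is a routine elaboration of Proposition~\ref{proposition: regular refinement of system of varieties}.
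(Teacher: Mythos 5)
Your two-stage plan coincides with the paper's: first build $\Om_0$ as the algebraic cover of $\Om_0(u)^-$ by roots of $\mathcal{P}_u$ with no difference structure imposed, then build $\Om_1$ analogously from the polynomials $\mathcal{Q}_u$ coming out of Proposition~\ref{proposition: existence of regular refinements}, so that regularity of the components is again insensitive to the choice of difference structure. Properties (1), (2), (4) are obtained as you describe. The trouble is your argument for (3). You invoke ``universal compatibility'' to ``re-extend $\si$'' over a section $S$, and you also try to close by counting total dimensions. Neither is the right mechanism. The sections $S$ that (3) is applied to (see the use of property (\ref{equation: better surjective refinement: surjectivity over sections}) in Lemma~\ref{lemma: chain decomposition for regularity theorem}) are set-theoretic sections inside $K_q$, not difference subvarieties, so $\rho_u^{-1}(S)$ has no total dimension to count. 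And extensions of $\si$ are beside the point, precisely because $\Om_1(u)$ is constructed so as to impose \emph{no} difference structure on the roots of $\mathcal{Q}_u$. That is the whole reason (3) ``follows by construction'': for any $x\in\Om_0(u)$ lying over $\bar x\in\Om_0(u)^-$, the fibre of $\Om_1(u)$ over $x$ consists of all tuples of roots of $\mathcal{Q}_u$ over $x$, and among these one can always find a tuple whose lower-level projection agrees with $S(\bar x)$, since the lower-level Galois data frozen by $S$ is itself generated by some of the roots of $\mathcal{Q}_u$ and $K_q^{\mathrm{alg}}$ contains all of them. No compatibility of $\si$-actions needs to be arranged.

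A smaller point: you apply Proposition~\ref{proposition: existence of regular refinements} componentwise to the irreducible components of $\Om_0$, whereas the paper applies it once to the system $\S$ of difference fields associated to the irreducible $\Om$, reads off $\mathcal{Q}_u$ from the resulting $\S'$, and builds $\Om_1$ over $\Om_0$ from these. Since $\Om_0$ need not be irreducible, your componentwise refinements would have to be shown to agree before they assemble into a single system; the paper sidesteps this by never leaving the single irreducible $\S$.
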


\begin{proof} 
Let $\S$ be a system of difference fields associated to $\Om$. $\S$ is unique up to isomorphism since $\Om$ is irreducible. For every $u \subseteq V$, let $\mathcal{P}_u$ be a family of polynomials such that $\S(u)$ is the splitting field of $\mathcal{P}_u$ over $\S(u)^-$. Adding purely inseparable polynomials to $\mathcal{P}_u$ if necessary, we can assume that $\mathcal{P}_u$ is a family of polynomials over $A(x_u)_\si$, where $x_u$ is the coordinate of a point of $\Om(u)$.

For $|u| = 1$, set $\Om_0(u) = \Om(u)$. For general $u$, define $\Om_0(u)$ to be the algebraic cover of $\Om_0(u)^-$ consisting of points $(a, b)$ where $a \in \Om_0(u)^-$ and $b$ generates the splitting field of $\mathcal{P}_u$ over $A(a)_\si$. $\Om_0(u)$ adds only algebraic information to $\Om_0(u)^-$ and does not choose between difference field structures on its generic points. We can assume that $\Om(u)$ is a subvariety  of $\Om_0(u)$ by using some appropriate syntactical coding. It is clear that the projections $\rho_u : \Om_0(u) \to \Om_0(u)^-$ and $\Om_0(u) \to \Om_0(v)$ are all surjective.

By Proposition \ref{proposition: existence of regular refinements}, find a regular $P(V)^-$-refinement $\S'$ of $\S$. For every $u\subseteq V$, let $\mathcal{Q}_u$ be a family of polynomials such that $\S'(u)$ is the splitting field of $\mathcal{Q}_u$ over $\S(u)$. Similarly to the construction sketched above, define $\Om_1(u)$ to be a variety projecting onto $\Om_0(u)$ such that the generic points of $\Om_1(u)$ generate the roots of $\mathcal{Q}_u$ over the generic points of $\Om_0(u)$ but $\Om_1(u)$ does not impose any difference structure on these roots.  As in Proposition \ref{proposition: regular refinement of system of varieties}, since the field extensions all lie in the invariant algebraic closure, regularity does not depend on the difference field structure and every component of $\Om_1$ is regular.

The surjective properties between $\Om_1$ and $\Om_0$ follow by construction.
\end{proof}

Our second lemma constructs a chain decomposition of the refinement obtained in Lemma \ref{lemma: better surjective refinement} with useful combinatorial properties. In the following, with $\Om_0$ and $\Om_1$ as in Lemma \ref{lemma: better surjective refinement}, we assume that the projections $\Om_1(u) \to \Om_0(u)$, $\Om_1(u) \to \Om_1(u)^-$  and $\Om_0(u) \to \Om_0(u)^-$ are exactly surjective, as this amounts to  changes in the varieties $\Om_1(u), \Om_0(u)$ of size $O(q^{dim(\Om(u)) - 1})$. These changes will eventually be absorbed in the error terms.

\begin{lemma}\label{lemma: chain decomposition for regularity theorem}
Let $\Om$ be an irreducible system of varieties on $V$ over $A$ and take $\Om\subseteq \Om_0$ and $\Om_1$ a surjective refinement of $\Om_0$ as in Lemma \ref{lemma: better surjective refinement}. Specialise $\Om, \Om_0, \Om_1$ to some $K_q$.

 There is a $P(V)$-chain decomposition $(\mathcal{W}_v)_{v \in P(V)}$ of $\Om$ with size depending only on the degrees of the varieties in $\Om$ such that for every $v \subseteq V$ and every $X\in \mathcal{W}_v$, there is a set $X^* \subseteq \Om_1(v)$ such that the following properties are satisfied:
\begin{enumerate}
\item For every $u \subseteq V$ and $X \in \mathcal{W}_u$, $|X| \geq \lambda_u |\Om(u)|$ for some scalar $\lambda_u> 0$ which does not depend on $q$
\item  For every $u \subseteq V$, the sets $X^*$ for $X \in \mathcal{W}_u$ are all contained in different irreducible components of $\Om_1(u)$ and for every $X \in \mathcal{W}_u$, the projection $\pi: X^* \to X$ is bijective
\item For every $u \subseteq V$, if $W$ is a $P(u)$-chain in $(\mathcal{W}_v)_{v \in P(u)}$, the sets $W(v)^*$ form a $P(u)$-chain in $\Om_1\upharpoonright P(u)$ (and we write $W^*(v)$ instead of $W(v)^*$)
\item For every $u \subseteq V$, if $W$ is a $P(u)$-chain in $(\mathcal{W}_v)_{v \in P(u)}$, writing $Z(u)$ for the irreducible component  of $\Om_1(u)$ containing $W^*(u)$, we have      $\rho_uW^*(u) = \rho_uZ(u) \cap W^*(u)^-$
\item  For every $u \subseteq V$ and $X, X' \in \mathcal{W}_u$, the sets  $\rho_uX$ and $\rho_uX'$ are either equal or disjoint.\label{equation: chain decomposition: further compatibility with projections}
\end{enumerate}
\end{lemma}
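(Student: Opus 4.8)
The plan is to construct the chain decomposition $(\mathcal{W}_v)_{v \in P(V)}$ together with the assignment $X \mapsto X^*$ by induction on $|v|$, visiting the subsets of $V$ in an order refining inclusion. At the bottom we put $\mathcal{W}_\emptyset = \{\Om(\emptyset)\}$ and, for singletons, $\mathcal{W}_{\{i\}} = \{\Om(\{i\})\}$ with $X^*$ a section of the projection $\pi$ from one fixed irreducible component of $\Om_1(\{i\})$ onto $\Om_0(\{i\}) = \Om(\{i\})$ (which we may take to be surjective after the harmless modifications mentioned before the statement); all five properties are immediate at this level.

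For the inductive step at a set $u$ with $|u| = k \geq 2$, I would let $W$ range over the boundedly many $P(u)^-$-chains contained in $(\mathcal{W}_v)_{v \in P(u)^-}$; by definition these partition $\Om(u)^-$ into the fibre products $W(u)^- = \prod(W(v), v \in P(u)^-)$. By induction each $W$ carries a $P(u)^-$-chain $W^*$ in $\Om_1 \upharpoonright P(u)^-$ on which $\pi$ is bijective; using that the projections $\Om_1 \to \Om_0$ commute with the face projections, $\pi : W^*(u)^- := \prod(W^*(v), v \in P(u)^-) \to W(u)^-$ is then bijective, and $W^*(u)^-$ lies in a single irreducible component $C$ of $\Om_1(u)^-$. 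Now pull $W^*(u)^-$ back along $\rho_u : \Om_1(u) \to \Om_1(u)^-$ and intersect with each of the finitely many irreducible components $Z$ of $\Om_1(u)$ lying over $C$; for each choose a section $Y_{Z,W}$ of the finite map $\rho_u : Z \cap \rho_u^{-1}(W^*(u)^-) \to \rho_u(Z) \cap W^*(u)^-$. Since $\rho_u(Z \cap \rho_u^{-1}(B)) = \rho_u(Z) \cap B$ as sets, this already forces $\rho_u(Y_{Z,W}) = \rho_u(Z) \cap W^*(u)^-$, which is property~(4); and because $\pi$ is injective on $W^*(u)^-$, a one-line argument gives that $\pi$ is injective on $Y_{Z,W}$. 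I would then set $W^*(u) = Y_{Z,W} \cap \pi^{-1}(\Om(u))$ and let the associated part of $\mathcal{W}_u$ be $X = \pi(W^*(u)) \subseteq \Om(u)$. Letting $(Z,W)$ vary, the resulting sets $X$ cover $\Om(u)$ (absorbing the lower-dimensional leftovers of size $O(q^{\dim \Om(u) - 1})$, coming from the non-exact surjectivities, into an arbitrary adjacent part), and $\mathcal{W}_u$ has bounded size since the numbers of chains $W$ and of components $Z$ are bounded by the degrees of the varieties in $\Om$. Properties~(2) and (5) then hold by construction (distinct parts lift into distinct components $Z$; the sets $\rho_u X$ match, under $\pi$, the sets $\rho_u(Z) \cap W^*(u)^-$, which for distinct $(Z,W)$ are equal or disjoint because the $W^*(u)^-$ partition $C$), and property~(3) holds because the $W^*(v)$ were chains at the previous stage and the new $W^*(u)$ projects into them.

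The real content, and the step I expect to be the main obstacle, is property~(1): each part $X$ must have density $\geq \lambda_u$ in $\Om(u)$, uniformly in $q$. This requires two things. First, that $\rho_u : Z \cap \rho_u^{-1}(W^*(u)^-) \to \rho_u(Z) \cap W^*(u)^-$ has generically constant fibre size; this follows from irreducibility of the components of $\Om_1$, i.e.\ from Lemma~\ref{lemma: better surjective refinement}(4). Second, and this is where regularity of $\Om_1$ is genuinely used, that $\rho_u(Z) \cap W^*(u)^-$, and its further trace on $\pi^{-1}(\Om(u))$, are not of lower dimension than $W^*(u)^-$: here I would combine Lemma~\ref{lemma: better surjective refinement}(3) (surjectivity of $\rho_u^{-1}(S) \to \Om_0(u)$ over sections $S$ of $\Om_1(u)^- \to \Om_0(u)^-$) with the dominance of $\Om(u) \to \Om(u)^-$ and the inclusion $W(u)^- \subseteq \Om(u)^-$ to see that a generic point of $W^*(u)^-$ is reached by a point of $Z$ whose $\pi$-image lies in $\Om(u)$. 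Finally, these statements about dominant and generically-finite morphisms are converted into the asymptotic count $|X| = c\,q^{\dim \Om(u)} + O(q^{\dim \Om(u)-1})$ with $c$ a positive rational bounded away from $0$ by the twisted Lang--Weil estimates of Theorem~\ref{theorem: twisted lang-weil} together with Lemma~\ref{lemma: explicit definable measure on varieties}. The genuine difficulty throughout is bookkeeping the three nested objects $\Om(u) \subseteq \Om_0(u)$ and its finite cover $\Om_1(u)$ at once, so that sections chosen downstairs stay sections upstairs and no density is lost when intersecting with $\pi^{-1}(\Om(u))$; this is precisely what the surjectivity clauses of Lemma~\ref{lemma: better surjective refinement} are arranged to provide.
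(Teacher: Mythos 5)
Your inductive architecture — visit subsets in an order refining inclusion, lift each chain $W$ to $W^*$ living in $\Om_1$, take sections of $\rho_u$ so that $\pi$ becomes injective — is essentially the paper's. The argument that $\pi$ is injective on a $\rho_u$-section because $\pi$ is injective on $W^*(u)^-$ and the projections commute is correct and matches the paper. But there are three concrete gaps in the way you allocate sections, and together they mean your $\mathcal{W}_u$ need not be a partition of $\Om(u)$ and property~(4) need not hold.

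\emph{Base case.} You set $\mathcal{W}_{\{i\}} = \{\Om(\{i\})\}$ and take $X^*$ to be a section of $\pi$ on a single fixed irreducible component of $\Om_1(\{i\})$, asserting that component surjects onto $\Om(\{i\})$. The exact-surjectivity modification before the lemma only makes the total map $\Om_1(\{i\}) \to \Om_0(\{i\})$ surjective; it says nothing about individual components. $\Om_1(\{i\})$ can perfectly well have several irreducible components whose $\pi$-images are (essentially) disjoint and together partition $\Om(\{i\})$ --- e.g.\ a ``square root'' cover where the two Galois branches land in different Frobenius orbits --- in which case no single component surjects, the section does not map onto $X$, and property~(2) fails. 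The paper's fix is to make $\mathcal{W}_{\{i\}}$ have one part per irreducible component $Z_j(\{i\})$, with $X_j = \pi(Z_j(\{i\}))$ and $X_j^*$ a section of $\pi$ over $X_j$.

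\emph{Inductive step, single section per component.} For a fixed chain $W$ and component $Z$ you take one section $Y_{Z,W}$ of $\rho_u$. Its $\pi$-image has size $\approx |\rho_u(Z) \cap W^*(u)^-|$, i.e.\ one point of $\Om(u)$ per point of $W^*(u)^-$ above $\rho_u(Z)$. But the target $\rho_u^{-1}(W(u)) \cap \Om(u)$ has $d$ points per fibre, where $d$ is the degree of $\rho_u : \Om(u) \to \Om(u)^-$, and in general $d$ is larger than the number of components of $\Om_1(u)$ over $C$: the restriction of $\pi$ to a $\rho_u$-fibre of $Z$ is typically not a single point. What is left uncovered is therefore a full-dimensional set, not a set of size $O(q^{\dim\Om(u)-1})$, so your ``absorb the leftovers'' step is not available. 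This is why the paper takes \emph{several} sections $X_1^*, \ldots, X_m^*$ of $Z \cap \rho_u^{-1}(W^*(u)) \to W^*(u)$, chosen so that their $\pi$-images are pairwise disjoint and together exhaust $\pi(Z \cap \rho_u^{-1}(W^*(u)))$; the number $m$ is bounded by the multiplicities and so still contributes only a bounded factor to $|\mathcal{W}_u|$.

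\emph{Property~(4) breaks under intersection.} You first observe (correctly) that $\rho_u(Y_{Z,W}) = \rho_u(Z) \cap W^*(u)^-$, and then \emph{redefine} $W^*(u) := Y_{Z,W} \cap \pi^{-1}(\Om(u))$. After this intersection, $\rho_u W^*(u)$ can be a strict subset of $\rho_u(Z) \cap W^*(u)^-$, because some points of $Y_{Z,W}$ have $\pi$-image outside $\Om(u)$, so property~(4) is no longer justified. The paper avoids this by choosing the irreducible subvarieties $Z_i(u)$ of $\Om_1(u)$ from the start so that each $Z_i(u)$ already projects into $\Om(u)$ under $\pi$; then $Z_i \cap \rho_u^{-1}(W^*(u)) \subseteq \pi^{-1}(\Om(u))$ automatically and the sections need not be trimmed. (By irreducibility, any component whose $\pi$-image is not essentially contained in $\Om(u)$ meets $\pi^{-1}(\Om(u))$ only in a lower-dimensional set, so one loses nothing by discarding it.)

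Fixing all three points requires more parts per $u$ than your construction produces --- one per (surviving component, chain, section index) rather than one per (component, chain) --- but the count remains bounded in terms of the degrees. Your appeal to Lemmas~\ref{lemma: better surjective refinement}, \ref{lemma: measure of system of varieties is measure of product} and Theorem~\ref{theorem: twisted lang-weil} for density (property~(1)) is in the right spirit, but it must be applied to each of these more numerous parts.
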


\begin{proof}
We construct the families $(\mathcal{W}_v)_{ P(V)^-}$ and $(\mathcal{W}_v^*)_{ P(V)^-}$ inductively. For $|v| = 1$, we choose components $Z_1(v), \ldots, Z_p(v)$ of $\Om_1(v)$ such that the sets $\pi(Z_i(v))$ partition $\Om(v)$. Let $\mathcal{W}_v = \{\pi(Z_i(v))\}$ and define $\mathcal{W}_v^*$ to be a set of sections of the projections $\pi : Z_i(v) \to \Om(v)$.

 Suppose $(\mathcal{W}_v)_{I}$ and $(\mathcal{W}_v^*)_{ I}$ have been constructed where $I$ is downward closed and take $u \subseteq V$ such that $P(u)^- \subseteq I$. Let $W_1, \ldots, W_n$ be an enumeration of the $P(u)^-$-chains contained in $(\mathcal{W}_v)_{P(u)^-}$ which intersect $\rho_u \Om(u)$. Note that the sets $W_i^*(u)$ all lie in different irreducible components of $\Om_1(u)^-$ by Lemma \ref{lemma: measure of system of varieties is measure of product}.
 
We consider  any $P(u)^-$-chain $W$ among the chains $W_1, \ldots, W_n$. Then $\pi : W^*(u)  \to W (u) $ is bijective and by property (\ref{equation: better surjective refinement: surjectivity over sections}) of  Lemma \ref{lemma: better surjective refinement}, $\pi$ lifts to a surjection  $\pi: \rho_u^{-1}(W^*(u)) \to \rho_u^{-1}(W(u))$. Let $Z_1(u), \ldots, Z_{k}(u)$ be irreducible subvarieties of $\Om_1(u)$ such that $\rho_u^{-1}(W^*(u)) \cap \pi^{-1}(\Om(u)) \subseteq \bigcup Z_i(u)$ and each $Z_i(u)$  projects to a subset of $\Om(u)$ under $\pi$.

For every $i$, since $Z_i(u)$ is irreducible, both projections $\rho_u : Z_i(u) \to \Om_1(u)^-$ and $\pi: Z_i(u) \to \Om(u)$ have constant multiplicity. Moreover, for every fibre $F$ of $\rho_u : Z_i(u) \to \Om_1(u)^-$, the image $\pi(F)$ has constant size. 

Starting with $i = 1$, we choose sections $X^*_1, \ldots, X^*_m$ of the projection 
 \[
Z_1(u) \cap \rho_u^{-1}(W^*(u)) \to W^*(u)
\] 
such that $\pi( \bigcup X_j^*) = \pi \big(Z_1(u) \cap \rho_u^{-1}(W^*(u)) \big)$ and the sets $\pi( X_j^*)$ are pairwise disjoint. We add the sets $X_j^*$ to $\mathcal{W}_u^*$ and the sets $\pi(X_j^*)$ to $\mathcal{W}_u$. Property  (\ref{equation: chain decomposition: further compatibility with projections}) can be obtained by the same argument.

For $i =2$, suppose that some element of $Z_2(u) \cap \rho_u^{-1}W^*(u)$ projects into the set $\pi(Z_1 \cap \rho_u^{-1} W^*(u))$. By irreducibility of $Z_2(u)$, it follows that 
\[
\pi(Z_2(u) \cap \rho_u^{-1}W^*(u)) \subseteq \pi(Z_1(u) \cap \rho_u^{-1}W^*(u))
\]
so that we can ignore $Z_2(u)$ and move on to $Z_3(u)$. If this does not happen, then we can choose sections of $Z_2(u)$ as for $Z_1(u)$. Property (\ref{equation: chain decomposition: further compatibility with projections})  is guaranteed by the same argument.

We iterate in this way through all of $Z_1(u), \ldots, Z_k(u)$, thus constructing a partition $\mathcal{W}_u$ of $\rho_u^{-1}(W(u))$ with the desired properties. We then iterate this construction through all of $W_1(u), \ldots, W_n(u)$ and this defines $\mathcal{W}_u$ and $\mathcal{W}_u^*$ as desired.
\end{proof}

We can now prove an algebraic hypergraph regularity lemma which retains the   strong combinatorial properties of the \'etale setting. The theorem is stated for  arbitrary systems of   varieties, but the main case of interest is for systems $\Om$ on $V$ where $\Om(V)^-$ is the Cartesian product $\prod_{i \in V} \Om(i)$ and $\Om(V)$ is a cover of  $\Om(V)^-$ giving rise to an arbitrary definable set.

We state the theorem in a slightly redundant way: condition (2) directly implies condition (1), as the proof will show. Nevertheless, condition (1) is probably the main import  of the theorem. 

\begin{theorem}\label{theorem: combinatorial regularity, classical partitions}
Let  $\Om$ an irreducible   system of difference varieties on $V$ over $A$. Specialise $\Om$ to some $K_q$ and let $(\mathcal{W}_u)_{u \in P(V)}$ be  a  $P(V)$-chain decomposition constructed as  in Lemma \ref{lemma: chain decomposition for regularity theorem}.  Then the following properties hold: 
\begin{enumerate}
\item For every $P(V)^-$-chain $W$ contained in $(\mathcal{W}_v)_{v \in P(V)^-}$, either $W(V) \cap \rho_V \Om(V)$ is small, i.e.
\[
|W(V) \cap \rho_V \Om(V)| = O(q^{|V|-1})
\]
 or,  for every chain $W'$ contained in $W$, \[
|\frac{|\rho_V \Om(V) \cap W'(V)|}{|\rho_V\Om(V) \cap W(V) |} - \frac{|W'(V)|}{|W(V)|}| = O(q^{-2^{-|V|-1}})
\]\label{equation: combinatorial regularity, 1}
\item for every $u \subseteq V$, for every $P(u)$-chain $W$ contained in $(\mathcal{W}_v)_{v \in P(u)}$,  and for every  $P(u)^-$-chain $W'$ contained in $W \upharpoonright P(u)^-$, 
\[
|\frac{|\rho_u W(u)\cap W'(u)|}{|\rho_u W(u)  |} - \frac{|W'(u)|}{|W(u)^-|}| = O(q^{-2^{-|u|-1}})
\]\label{equation: combinatorial regularity, 2}
  \item For every $u \subseteq V$, for every $P(u)$-chain $W$ contained in $(\mathcal{W}_v)_{v \in P(u)}$,  writing $f = \I_{\rho_u W(u) } - \frac{|\rho_u W(u)|}{|W(u)^-|}\I_{W(u)^-}$ on $W(u)^-$, 
\[
\sum_{D(\Om \upharpoonright P(u)^-)} \prod_{\iota \in 2^u}f^\iota  = O(q^{-2^{-|u|-1}})|\Om(u)|^2
\]\label{equation: combinatorial regularity, 3}
\end{enumerate}
where the functions $O(\cdot)$ depend only on the degrees of the   polynomials in  $\Om$.

%
%
%
%
\end{theorem}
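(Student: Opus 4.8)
The plan is to deduce Theorem \ref{theorem: combinatorial regularity, classical partitions} by combining the structural results of Lemmas \ref{lemma: better surjective refinement} and \ref{lemma: chain decomposition for regularity theorem} with the \'etale hypergraph regularity lemma, Theorem \ref{theorem: combinatorial hypergraph regularity with covers}, transported across chains via the twisted Lang-Weil estimates (Theorem \ref{theorem: twisted lang-weil}). The key observation is that, in the chain decomposition $(\mathcal{W}_v)_{v \in P(V)}$ built in Lemma \ref{lemma: chain decomposition for regularity theorem}, each set $X \in \mathcal{W}_u$ lifts bijectively (via $\pi$) to a set $X^*$ sitting inside a single irreducible component $\Om_1(u)$, and a $P(u)$-chain $W$ in the decomposition lifts to a $P(u)$-chain $W^*$ in $\Om_1 \upharpoonright P(u)$ whose top set satisfies $\rho_u W^*(u) = \rho_u Z(u) \cap W^*(u)^-$ where $Z(u)$ is the ambient irreducible (hence regular) component of $\Om_1$. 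Thus counting $|\rho_u W(u) \cap W'(u)|$ on the classical side is, up to the bijection $\pi$ and up to error terms of size $O(q^{\dim(\Om(u))-1})$ coming from the ``exactly surjective'' adjustments, the same as counting $|\rho_u Z(u) \cap W^*(u)^- \cap W'^*(u)|$ inside the regular system $\Om_1$, where Theorem \ref{theorem: combinatorial hypergraph regularity with covers} applies.

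Concretely, I would proceed as follows. First fix $u \subseteq V$, a $P(u)$-chain $W$ in the decomposition, and a $P(u)^-$-chain $W'$ refining $W \upharpoonright P(u)^-$. Pass to the lifts $W^*$, $W'^*$ in $\Om_1$, and let $Z = Z(u)$ be the irreducible component of $\Om_1(u)$ with $W^*(u) \subseteq Z$; $Z$ is one of the regular components $\Om_i$ of the surjective refinement, so Theorem \ref{theorem: combinatorial hypergraph regularity with covers} tells us $Z$ (with the induced system structure) is $\e$-edge-uniform with $\e_u(q) = q^{-2^{-|u|-1}}$. Second, I would unwind the definition of edge-uniformity applied to the $P(u)^-$-chain $W'^*$ inside $Z \upharpoonright P(u)^-$: writing $\phi^Z_u = \rho_u Z(u)$, edge-uniformity gives $\big| |\phi^Z_u \cap W'^*(u)| / |\phi^Z_u| - |W'^*(u)| / |Z(u)^-| \big| = O(q^{-2^{-|u|-1}})$. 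Here I use Lemma \ref{lemma: measures of varieties in cover well behaved} and Theorem \ref{theorem: twisted lang-weil} to relate $|Z(u)^-|$, $|W^*(u)^-|$, etc., to the classical quantities $|\Om(u)^-|$, $|W(u)^-|$ and to confirm that restricting $W'^*$ to lie over $W^*$ does not change the leading asymptotics beyond the allowed error. Third, I would transfer back along the bijections $\pi$: since $\pi: W^*(u) \to W(u)$ and $\pi: W'^*(u) \to W'(u)$ are bijective and compatible with the projections (property (3) of Lemma \ref{lemma: chain decomposition for regularity theorem}, plus the commuting squares in Definition \ref{definition: refinement of system of varieties}), we get $\rho_u W^*(u) = \phi^Z_u \cap W^*(u)^-$ mapping bijectively onto $\rho_u W(u)$, whence the ratio $|\rho_u W(u) \cap W'(u)| / |\rho_u W(u)|$ equals $|\phi^Z_u \cap W'^*(u)| / |\phi^Z_u \cap W^*(u)^-|$; combining with the edge-uniformity estimate and the size comparisons yields (2). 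The dichotomy in (1) is the special case $u = V$: either $|\rho_V\Om(V) \cap W(V)|$ is $O(q^{|V|-1})$ (i.e.\ $\phi^Z_V$ is a lower-dimensional subset of $Z(V)^-$, corresponding to $\mu(\phi^Z_V) = 0$), or its measure is positive and bounded below, in which case the edge-uniformity estimate divides through cleanly to give the stated ratio bound for all $W'$ contained in $W$. Finally, (3) follows from (2) by applying Proposition \ref{proposition: edge uniformity to quasirandom}: $\e$-\'etale-edge-uniformity of the relevant regular component (equivalently $\e$-edge-uniformity, by Lemma \ref{lemma: edge uniform same as etale-edge-uniform}) implies $\e$-quasirandomness, which is exactly the sum estimate in (3) once one pulls the function $f = \I_{\rho_u W(u)} - \tfrac{|\rho_u W(u)|}{|W(u)^-|}\I_{W(u)^-}$ through the bijection $\pi$ onto $Z(u)^-$ and identifies the doubling $D(\Om \upharpoonright P(u)^-)$ over $W$ with the doubling of the regular system.

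I expect the main obstacle to be bookkeeping the error terms through the chain decomposition rather than any single conceptual difficulty: the theorem statement asks for error $O(q^{-2^{-|u|-1}})$ in the \emph{relative} densities $|\rho_u W(u) \cap W'(u)| / |\rho_u W(u)|$, so I must ensure that (a) each $|\rho_u W(u)|$ is bounded below by a constant times $|\Om(u)|$ (this is property (1) of Lemma \ref{lemma: chain decomposition for regularity theorem}), so that the $O(q^{\dim - 1})$ adjustments from making projections exactly surjective really are absorbed into the claimed error, and (b) the constants in all the $O(\cdot)$ depend only on the degrees of the varieties in $\Om$ — this requires tracking that the number of chains in the decomposition, the multiplicities of all projections, and the degrees of the components of $\Om_1$ are all bounded in terms of $\deg \Om$, which follows from Proposition \ref{proposition: existence of regular refinements}, Lemma \ref{lemma: sections}, and standard degree bounds, but must be stated carefully. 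A secondary subtlety is checking, in case (1), that ``$W(V) \cap \rho_V \Om(V)$ small'' is genuinely dichotomous with ``positive lower density'': this is where irreducibility of the component $Z(V)$ and the fact that $\rho_V Z(V)$ is the image of a dominant projection of irreducible difference varieties (so has measure either $0$ or equal to $\mu(Z(V)^-) \cdot \nu > 0$, by Proposition \ref{proposition: alternative characterisation of measure}) come in, and the uniform gap below positive densities again follows from the degree bounds plus Theorem \ref{theorem: twisted lang-weil}.
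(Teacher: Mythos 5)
Your plan for parts (2) and (3) follows essentially the paper's route: lift a chain $W$ from the decomposition to $W^*$ inside a regular irreducible component $Z$ of $\Om_1$, apply Theorem \ref{theorem: combinatorial hypergraph regularity with covers} to $Z$, then transfer back across the bijections $\pi$, using property (1) of Lemma \ref{lemma: chain decomposition for regularity theorem} to keep the relative error terms under control. That part is sound, and your observation that (3) can be recovered by passing through Proposition \ref{proposition: edge uniformity to quasirandom} rather than using the quasirandomness already present in Theorem \ref{theorem: combinatorial hypergraph regularity with covers} is a harmless detour that gives the claimed bound.

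However, your treatment of (1) has a genuine gap. You claim (1) is ``the special case $u = V$'' of (2), with the dichotomy governed by whether $\mu(\phi^Z_V) = 0$. Neither is right. In (1), $W$ is a $P(V)^-$-chain, so $W(V)$ is the fibre product $\prod(W(v), v \in P(V)^-)$; the denominator $|\rho_V\Om(V) \cap W(V)|$ is not the quantity $|\rho_u W(u)|$ from (2), because $W$ has no top cell from $\mathcal{W}_V$. A given $P(V)^-$-chain $W$ in the decomposition may lie beneath several sets $X_1, \dots, X_m \in \mathcal{W}_V$, or beneath none at all. What the paper's proof actually does is invoke property (\ref{equation: chain decomposition: further compatibility with projections}) of Lemma \ref{lemma: chain decomposition for regularity theorem} to write $\rho_V\Om(V) \cap W(V)$ as the \emph{disjoint} union of the sets $\rho_V X_i$, then apply (2) to each of the $P(V)$-chains $(W, X_i)$ and sum; the ratio bound for (1) is an aggregate of $m$ instances of (2), not a single instance. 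Your explanation of the ``small'' case is also off: $\rho_V Z(V)$ never has measure zero, since the projection $Z(V) \to Z(V)^-$ is a dominant projection between irreducible varieties of the same dimension. The small case occurs precisely when no $X \in \mathcal{W}_V$ lies over $W$, in which case $W(V) \cap \rho_V\Om(V)$ consists only of lower-dimensional boundary pieces and is $O(q^{|V|-1})$. Without the aggregation step via property (\ref{equation: chain decomposition: further compatibility with projections}), your argument establishes (2) but does not yield (1).
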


\begin{proof}
Let $\Om_0$ and $\Om_1$ be as in Lemma \ref{lemma: better surjective refinement}. For every $X \in \mathcal{W}_v$, we write $X^*$ and $\pi : X^* \to X$ as in Lemma \ref{lemma: chain decomposition for regularity theorem}.

 Take $u \subseteq V$ and $W$ a $P(u)$-chain in $(\mathcal{W}_v)_{P(u)}$. For every $v \subseteq u$, write $Z(v)$ for the irreducible subvariety of $\Om_1(v)$ containing $W^*(v)$. Then $Z$ is a regular system of varieties.   Since $\rho_u W^*(u) = \rho_u Z(u) \cap W^*(u)^-$ and by Theorem \ref{theorem: combinatorial hypergraph regularity with covers}, $\rho_u W^*(u)$ has the obvious edge-uniformity properties with respect to chains contained in $W^* \upharpoonright P(u)^-$. 
 
  Let $W'$ be a $P(u)^-$-chain contained in $W \upharpoonright P(u)^-$. Write $W''(v)$ for the preimage of $W'(v)$ in $W^*(v)$, for every $v \in P(u)^-$, so that $W'(u) = \pi(W''(u))$. Then we have 
  \[
  |\frac{|\rho_u W^*(u) \cap W''(u)|}{|\rho_u W^*(u)|} - \frac{|W''(u)|}{|W^*(u)^-|}| = O(q^{-2^{-|u|-1}})
  \]
Moreover, since $\pi$ restricts to bijections $W^*(u)^- \to W(u)^-$ and $W^*(u) \to W(u)$, it follows easily that
  \[
  |\frac{|\rho_u W(u) \cap W'(u)|}{|\rho_u W(u)|} - \frac{|W'(u)|}{|W(u)^-|}| = O(q^{-2^{-|u|-1}})
  \] 
giving property (\ref{equation: combinatorial regularity, 2}). Property (\ref{equation: combinatorial regularity, 3}) follows by a similar argument and an application of Theorem \ref{theorem: combinatorial hypergraph regularity with covers}.

Finally, by Property (\ref{equation: chain decomposition: further compatibility with projections}) of Lemma \ref{lemma: chain decomposition for regularity theorem}, $\rho_V(\Om(V))$ is the disjoint union of a family of sets $\rho_V W_1(V), \ldots, \rho_V W_n(V) $ where the $W_i(V)$ are elements of $ \mathcal{W}_V$. Applying property (\ref{equation: combinatorial regularity, 2}) to each of the sets $\rho_V W_i(V)$ gives property (\ref{equation: combinatorial regularity, 1}). 
 \end{proof}  
   It is an open question whether we can strengthen  Theorems \ref{theorem: combinatorial hypergraph regularity with covers} and \ref{theorem: combinatorial regularity, classical partitions} to obtain an algebraic hypergraph regularity lemma such that the chain decomposition of Theorem \ref{theorem: combinatorial regularity, classical partitions} comes from definable sets.

 The results of \cite{BeyarslanHrushovski2012} about the model theory of pseudofinite fields show that an algebraic hypergraph regularity of this kind is available in many pseudofinite fields.
 \cite{BeyarslanHrushovski2012} shows that for almost all completions $T'$ of the theory $T$ of pseudofinite fields, for $K \models T'$, $M \prec K$ and $M \subseteq A \subseteq K$, we have $\acl(A) = \dcl(A)$. Here,  ``almost all'' is meant with respect to the Haar measure on the absolute Galois group of $\Qq$ or $\mathbb{F}_p$.  In this setting, the sections of Lemma \ref{lemma: chain decomposition for regularity theorem} become uniformly definable sets and hence the chain decomposition of  \ref{theorem: combinatorial regularity, classical partitions} is definable in the classical sense.
 
  An algebraic  hypergraph regularity lemma in the setting of \cite{BeyarslanHrushovski2012} can be found in the PhD thesis of the second named author. The proof of the algebraic regularity lemma given in there is rather different from the proof given here. The  argument used a weak version of the stationarity theorem and the derivation of the algebraic regularity lemma relied more heavily on combinatorial arguments. The argument in that thesis  also gave weaker error bounds on regularity than the ones we obtain here.

\bibliographystyle{alpha}\bibliography{Pseudofiniteregularity}

\end{document}